\documentclass[reqno]{amsart}
\usepackage{graphicx} 
\usepackage{amsmath}
\usepackage{amssymb}
\usepackage{amsthm}
\usepackage{amsfonts}
\usepackage{graphicx}
\graphicspath{ {pictures/} }
\usepackage{float}
\usepackage[export]{adjustbox}
\usepackage{wrapfig}
\usepackage{mathrsfs}
\usepackage{xcolor}
\usepackage{hyperref}
\usepackage[shortlabels]{enumitem}

\usepackage[style=numeric]{biblatex}
\newtheorem{theorem}{Theorem}[section]
\newtheorem{proposition}[theorem]{Proposition}
\newtheorem{corollary}[theorem]{Corollary}
\newtheorem{lemma}[theorem]{Lemma}

\newtheorem*{theorem*}{Theorem}
\newtheorem*{BT}{Besicovitch's Theorem}
\newtheorem*{BD}{Besicovitch Density}
\newtheorem*{DMMin}{Dense Monotone Minimum}

\theoremstyle{definition}
\newtheorem{definition}[theorem]{Definition}

\newcommand{\N}{\mathbb{N}}
\newcommand{\Cant}{2^{\N}}
\newcommand{\Str}{2^{<\N}}

\newcommand{\RCA}{\mathsf{RCA}}
\newcommand{\ACA}{\mathsf{ACA}}
\newcommand{\WKL}{\mathsf{WKL}}
\newcommand{\WWKL}{\mathsf{WWKL}}
\newcommand{\SOA}{\mathsf{Z}}
\newcommand{\BCTC}{\mathsf{BCTC}}
\newcommand{\DMM}{\mathsf{DMMin}}
\newcommand{\BES}{\mathsf{BES}}
\newcommand{\CA}{\mathsf{CA}}

\title[Baire Category Approach to Besicovitch's Theorem]{A Baire Category Approach to Besicovitch's Theorem and Measure Regularity}
\author[E. Gruner]{Emma Gruner}
\address{Department of Mathematics \\ Penn State University \\ University Park, Pennsylvania \\ 16802}
\email{eeg67@psu.edu}
\author[J. Reimann]{Jan Reimann}
\address{Department of Mathematics \\ Penn State University \\ University Park, Pennsylvania \\ 16802}
\email{jsr25@psu.edu}

\begin{document}

\keywords{Reverse mathematics, geometric measure theory, Baire Category Theorem, measure regularity}

\subjclass[2020]{03B30}

\begin{abstract}
    By reformulating the classical proof as a Baire Category argument, we show that Besicovitch's Theorem on the existence of subsets of finite Hausdorff measure is provable in $\ACA_0$, and additionally that the witnessing subset is computable from one jump of the original set. We show that the corresponding formulation of Baire Category, which we call Baire Category Theorem for Closed Sets ($\BCTC$), is equivalent to $\ACA_0$, contrasting with previous results on the reverse math strength of Baire Category variants. We also examine the implications of $\BCTC$ for a class of monotone functions on closed sets, and explore how changing the representation of a closed set affects the reverse math strength of its measure regularity properties.
\end{abstract}

\maketitle

\section{Introduction}

Since its inception in the latter half of the twentieth century, the reverse mathematics research program has been used to analyze theorems across a wide range of disciplines. Real analysis and point-set topology have been some of the most popular areas of study, since many of the objects involved (real numbers, separable metric spaces, continuous functions, open sets, etc.) have ``essentially countable" structures which allow them to be coded naturally as elements of $\Cant$. Thus, these fields can be meaningfully discussed in the context of second order arithmetic.

Geometric measure theory, on the other hand, is not widely explored at all in the reverse (or computational) mathematics framework. Reimann~\cite{reimann2008effectively} studied some effective aspects of Frostman's Lemma. Later, Pauly and Fouché~\cite{pauly2017constructive} classified Frostman's Lemma in the context of the Weihrauch degrees. 

Besides Frostman's Lemma, another foundational result of geometric measure theory is a theorem proven by Besicovitch in 1952 \cite{besicovitch1952existence},  which states that any closed subset in Euclidean space having infinite Hausdorff measure contains a closed subset with positive finite Hausdorff measure. Sets of positive finite measure have many convenient local properties (see \cite{falconer2013fractal} for an extensive treatment), so the ability to ``pass" to such a subset is highly useful. Later in 1952 \cite{davies1952accessibility}, Davies strengthened Besicovitch's result to the case where the given set is analytic. 

The main motivation for this paper is to study the set existence axioms required to prove Besicovitch's theorem.  
Besicovitch's original argument indicates a rather high complexity. If $F$ represents the original closed subset, then the witnessing subset $E$ is defined as the infinite intersection of nested, decreasing closed subsets $\{E_k\}_{k \in \omega}$. By the properties of the $s$-dimensional Hausdorff $\delta$-measure on compact sets, one can verify that $E_0$ has an arithmetic definition from $F$, and each subsequent $E_{k+1}$ can be defined similarly from $E_k$. Thus, while the existence of each set $E_k$ in the approximating sequence is provable in $\ACA_0$, the same may not be true for the infinite intersection $E = \bigcap_{k \in \omega}E_k$.

We impose a topology on the space of closed subsets of our given set $F$, and relate the Hausdorff $\delta$-measures of these sets to certain open and closed sets within this topology. By exploiting this topological structure, we use a variant of the Baire Category Theorem to significantly decrease the arithmetic complexity of the final witnessing subset $E$, yielding the following theorem:

\begin{theorem*}
    In Cantor space $2^{\mathbb{N}}$, Besicovitch's Theorem is provable in $\ACA_0$.
\end{theorem*}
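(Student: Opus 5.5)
We would first fix the setting. Let $F\subseteq\Cant$ be closed, coded as the set of paths through a tree $T_F\subseteq\Str$, with $\mathcal{H}^s(F)=\infty$. We work with the $s$-dimensional Hausdorff $\delta$-measures $\mathcal{H}^s_\delta$ computed from covers by cylinders $[\sigma]$ of diameter $2^{-|\sigma|}\le\delta$. The key fact is that for a closed set $C$ in Cantor space, compactness lets $\mathcal{H}^s_{2^{-n}}(C)$ be approximated by \emph{finite} cylinder covers. So for a rational $q$, the statement ``$\mathcal{H}^s_{2^{-n}}(C)<q$'' is $\Sigma^0_1$ in the tree of $C$. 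For closed sets given by trees with no dead ends (pruned trees) it is in fact decidable relative to the tree. In $\ACA_0$ we can pass from $T_F$ to its pruned subtree, so every closed set we handle will be coded by a pruned tree. Our goal is to produce, in $\ACA_0$, a closed $E\subseteq F$ with $0<\mathcal{H}^s(E)<\infty$, in fact with $\mathcal{H}^s(E)$ between two fixed constants, say $1\le \mathcal{H}^s(E)\le 2^{s+1}$ or similar.

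The space is $\mathcal{K}(F)$, the pruned subtrees of $T_F$ (equivalently the closed subsets of $F$). We topologize it by basic open sets given by finite ``level patterns'': for $n$ and a nonempty set $P$ of strings of length $n$ in $T_F$, the basic set is $\{C : C\text{'s level-}n\text{ strings are exactly }P\}$. This is a complete metric (Hausdorff metric) space. Since $\ACA_0$ proves the existence of $T_F$'s pruned part, the space admits a countable basis coded in $\ACA_0$. Following Besicovitch's construction, we define for each $k$ the condition $U_k$: ``$\mathcal{H}^s_{2^{-k}}(C)$ lies in a controlled window'', expressed by the level-$m$ patterns for $m\ge k$. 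The classical step is this: given any pattern $P$ at level $n$ with large $\mathcal{H}^s_{2^{-n}}$-content, we can thin it at a deeper level to a pattern $P'$ whose cylinders have $\mathcal{H}^s_{2^{-m}}$-content at most a fixed bound, yet at least a fixed lower bound. This uses the splitting lemma: a cylinder $[\sigma]$ with $\mathcal{H}^s_\delta([\sigma]\cap F)>2^{-s|\sigma|}$ can be cut down. The upshot is that the set $D_k$ of $C$ satisfying the upper bound on $\mathcal{H}^s_{2^{-k}}$ together with the lower bound is dense open, or at least closed with dense interior, within the relevant closed subspace.

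We then apply the Baire-category variant $\BCTC$, which is a Baire Category Theorem for closed sets in this coded space. We run the construction of a point of $\bigcap_k D_k$ effectively in the jump. We build a decreasing sequence of basic open sets (patterns $P_0\subseteq P_1\subseteq\cdots$ at increasing levels, each extending the last). Each step asks a question about $\mathcal{H}^s_\delta$ of a finite union of cylinders intersected with $F$. Such a question is decidable from the pruned tree, hence arithmetic in $T_F$, so the whole sequence exists by arithmetic comprehension. The limit $E$ has tree $\{\sigma:\sigma\text{ extends into }P_{n}\text{ for the relevant }n\}$, which is $\Delta^0_1$ in the sequence; this gives the ``one jump'' bound. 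Verifying the measure bounds then proceeds in two parts. The upper bound $\mathcal{H}^s(E)=\lim_k\mathcal{H}^s_{2^{-k}}(E)\le c$ comes from the explicit finite covers at every level. The lower bound is the delicate one: $\mathcal{H}^s_\delta$ of a decreasing intersection of compact sets is the limit of $\mathcal{H}^s_\delta$ of the sets. This holds for $\delta$-contents of compact sets because of finite-cover compactness, and it is provable in $\WKL_0\subseteq\ACA_0$.

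The main obstacle is the lower bound. Thinning at stage $k$ must not destroy the mass at coarser scales $\delta=2^{-j}$ for $j<k$, as in Besicovitch's weighting argument. To handle this, I would carry, as part of each pattern's invariant, a mass-distribution-style inequality: every cylinder $[\sigma]$ meeting the pattern satisfies $\mathcal{H}^s_\infty([\sigma]\cap C)\ge c\,2^{-s|\sigma|}$ for a fixed $c$. This invariant is a finite check and is preserved under the thinning step. By the mass distribution principle it gives $\mathcal{H}^s(E)\ge c'$ without any limit interchange, and the principle itself holds in $\ACA_0$ via the measure defined on the constructed tree.
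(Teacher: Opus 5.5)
There is a genuine gap, and it is at the point you yourself call the main obstacle. Everything rests on the claim that the fine-scale upper-bound conditions are dense inside the set where the coarse-scale lower bound holds. In the paper's notation this says each $\mathcal{U}^d_n$ is dense in $\mathcal{S}^c_{n_0}$ (Besicovitch Density). You only assert it (``can be cut down'', ``dense open, or at least closed with dense interior''). Your thinning step puts each piece in a window whose lower end is a fixed fraction $a<1$ of $2^{-s|\sigma|}$. The natural swap argument (replace the part of a cover lying strictly inside $[\sigma]$ by $\sigma$ itself) then only gives $\mathcal{H}^s_{2^{-n_0}}(E_{\mathrm{new}})\ge a\,\mathcal{H}^s_{2^{-n_0}}(E_{\mathrm{old}})$. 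Over infinitely many scales this lower bound decays to $0$. The paper closes the gap with $n$-thin codes, those with $H^s_{n+1}(Z_\tau)\le 2^{-ns}$ for all $|\tau|=n$, and two lemmas. First, $\mathcal{U}^d_n\cap\mathcal{T}_n\subseteq\mathcal{U}^d_{n+1}$, so for thin codes an upper bound at scale $n$ passes to scale $n+1$ at arbitrarily small cost. Second, $\mathcal{T}_n$ is dense in $\mathcal{S}^c_{n_0}$. This is proved by replacing each fat piece $Z_\sigma$ with a subset whose $H^s_{n+1}$ is \emph{exactly} $2^{-ns}$, obtained via $\DMM$; this is where $\ACA_0$ enters beyond pruning. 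Exactness makes the swap weight-non-increasing, so $H^s_{n_0}\ge c$ survives the thinning, and an arithmetic induction on $n$ then gives density for every $n$.

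Your proposed fix does not supply this.
\begin{itemize}
\item The invariant $\mathcal{H}^s_\infty([\sigma]\cap C)\ge c\,2^{-s|\sigma|}$ is a lower bound on a content, hence a $\Pi^0_1$ condition on the tree, not a finite check.
\item Relatedly, $\mathcal{H}^s_{2^{-n}}(C)<q$ is only $\Sigma^0_1$, not decidable, even for pruned trees. For $s=1$ it is Lebesgue measure, and computable pruned trees can have non-computable measure.
\item You give no argument that thinning preserves the invariant. If it is required for all cylinders meeting the final set, it can be unattainable. Take $F$ given by a tree in which all nodes of a level have the same number of children, alternating full-branching stretches with single-path stretches of lengths $L_k\to\infty$. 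For $0<s<1$ it has $\mathcal{H}^s(F)=\infty$ once the branching stretches are long enough. Yet every point of $F$ lies in cylinders with $\mathcal{H}^s_\infty([\sigma]\cap F)\le 2^{-sL_k}\,2^{-s|\sigma|}$, so no nonempty $E\subseteq F$ satisfies the invariant for any fixed $c>0$.
\item The mass distribution principle needs a measure with \emph{upper} bounds $\mu([\sigma])\le K2^{-s|\sigma|}$, not lower bounds on contents.
\end{itemize}

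Keeping the lower bound in the limit is actually the easy part. Your own remark about decreasing intersections of compact sets already shows that a bound $\mathcal{H}^s_{2^{-n_0}}\ge c$ held at every stage survives to the limit, and $\mathcal{H}^s\ge\mathcal{H}^s_{2^{-n_0}}$ trivially. This is why the paper makes the lower bound the ambient closed set $\mathcal{S}^c_{n_0}$ and uses only the upper bounds $\mathcal{U}^{d_n}_n$, with $d_n\downarrow c$, as the dense open sets for $\BCTC$. That setup also yields $\mathcal{H}^s(E)=c$ exactly, as the formulation $\BES$ requires, whereas you aim only for $\mathcal{H}^s(E)$ between two constants.
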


Extracting the computational content, our proof of the theorem above yields the following:

\begin{theorem*}
    Let $F \subseteq 2^{\omega}$ be a closed subset with infinite $s$-dimensional Hausdorff measure, and suppose $Z_F \in 2^{\omega}$ is a code for its tree. Then we can find a closed subset $E \subseteq F$ with positive finite Hausdorff measure whose code $Z_E$ is computable from $Z_F'$ (the first Turing jump of $Z_F$).
\end{theorem*}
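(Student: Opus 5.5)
We plan to recast Besicovitch's construction as a Baire category argument in a space of closed subsets of $F$, and then to effectivize the category argument relative to $Z_F'$. Let $\mathcal{K}$ be the space of pruned subtrees $T \subseteq Z_F$, that is, of closed subsets of $F$. We topologize $\mathcal{K}$ by basic sets determined by finite information: a finite level $n$ and a set $S \subseteq Z_F \cap 2^n$ of strings that must be exactly the level-$n$ nodes of $T$. A point of $\mathcal{K}$ is therefore coded by the sequence of its finite levels.

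The key observation is that, for a closed set $E$ and rational $\delta = 2^{-m}$, the Hausdorff $\delta$-premeasure $\mathcal{H}^s_\delta(E)$ is a minimum over covers by cylinders of length at least $m$. By compactness it is attained by a finite cover, and it can be computed from a sufficiently deep level of the tree once we know which nodes are extendible. So "$\mathcal{H}^s_\delta(E) < q$" and "$\mathcal{H}^s_\delta(E) > q$" become open or closed conditions in $\mathcal{K}$, decided by finite levels. For instance, $\mathcal{H}^s_\delta(E) \le q$ holds iff some finite cover of some level works. Hence the set of $E$ with $\mathcal{H}^s(E) \le c$ is a $G_\delta$-type (or closed-type) condition, and positivity is handled by a closed lower-bound condition inherited along the construction.

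We then follow Besicovitch's construction but interleave the stages. At stage $k$ we have a finite approximation, namely a set $S_k$ of level-$n_k$ nodes of $Z_F$ (extendible in $Z_F$) together with the commitment that the final set is contained in $\bigcup_{\sigma\in S_k}[\sigma]$ and keeps measure bounded below. We extend it to $S_{k+1}$ at a deeper level, chosen so that the $2^{-k}$-premeasure of any set with these levels is at most a fixed constant $c$, while $\mathcal{H}^s_{\delta}$ stays at least a fixed $b>0$ for the relevant $\delta$.

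Concretely, Besicovitch's step is a pruning: in each cylinder of length $n_k$ we keep just enough descendants so that the mass distribution $2^{-ns}$ is preserved up to a constant. This is possible because $\mathcal{H}^s(F\cap[\sigma])=\infty$ implies $\mathcal{H}^s_\delta(F \cap [\sigma])\ge 2^{-|\sigma| s}$ for small $\delta$. Each such search needs two things. First, we must know which nodes of $Z_F$ are extendible, which is $\Pi^0_1(Z_F)$ and so decidable from $Z_F'$. Second, we must evaluate premeasures of finite unions of sets of the form $F\cap[\tau]$. That evaluation is a $\Sigma^0_1$ statement about finite covers in the tree (a cover exists iff some level is covered), also decidable in $Z_F'$. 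Hence the sequence $(S_k)$ is $Z_F'$-computable, and $Z_E=\{\sigma : \sigma \text{ is a prefix of some element of } S_k \text{ for } k \text{ large}\}$ is $Z_F'$-computable. We will ensure levels are nested, so $Z_E$ is simply the union of the downward closures of the $S_k$.

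To finish, we verify the bounds. The upper bound $\mathcal{H}^s(E)\le c$ follows because each $S_k$ gives a cover of $E$ of size at most $c$ and mesh $2^{-n_k}$. The lower bound $\mathcal{H}^s(E) > 0$ comes from a mass distribution principle, using a measure on $E$ defined by splitting mass through the $S_k$, or by passing the uniform lower bound of $\mathcal{H}^s_\delta$ to the limit via compactness of the cover condition.

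The main obstacle is the lower bound. The inequality $\mathcal{H}^s_\delta(E) \ge b$ is not open, so it is not preserved automatically under intersection; this is exactly where the naive $\bigcap E_k$ argument needs many jumps. We would first try to arrange, at each stage, the local condition that every $\tau\in S_{k+1}$ below $\sigma\in S_k$ carries weight comparable to $2^{-|\tau|s}$ with sums controlled. This local condition makes the mass distribution principle apply directly and requires only finite checks relative to $Z_F'$.
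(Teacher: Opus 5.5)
Your framework is the right one. If you fix the lower-bound scale $\delta_0=2^{-n_0}$ once and for all, your two stage conditions are exactly the paper's, with lower threshold $c$ and upper threshold $d>c$ in its notation. Requiring $\mathcal{H}^s_{\delta_0}(F\cap\bigcup_{\tau\in S_k}[\tau])\ge c$ says the current finite approximation lies in the pruned tree of the closed set $\mathcal{S}^c_{n_0}$; this is a $\Pi^0_1(Z_F)$ condition, so $Z_F'$ decides it. Bounding the $2^{-n}$-premeasure says the approximation has entered the open set $\mathcal{U}^d_n$.

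The genuine gap is that you never show a refinement $S_{k+1}$ satisfying both conditions exists, and the reason you offer is false. $\mathcal{H}^s(F)=\infty$ says nothing about individual cylinders: a cylinder meeting $F$ can have measure $0$, or finite positive measure. For example, take $F=\{0^\omega\}\cup\bigcup_k 0^k1F_k$ where each piece $0^k1F_k$ has measure $1$. Moreover, after the first pruning the cylinders you keep are deliberately of finite measure, so the argument could not be iterated anyway. Preserving mass ``up to a constant'' at each stage would compound over infinitely many stages. A uniformly Frostman-regular choice of the $S_k$ inside an arbitrary $F$ is also not something you can simply arrange; its existence is essentially the content of the theorem.

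You also misplace the difficulty, contradicting your own (correct) second paragraph. A lower bound at a fixed scale is a closed condition, and for nested compact sets it passes to the limit automatically: any finite cylinder cover of the intersection already covers some stage. So it survives as long as every stage stays in the pruned tree of $\mathcal{S}^c_{n_0}$.

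What must actually be proved is that the blind search always terminates. At each stage you look for an extension that stays in this pruned tree and lands in the next $\mathcal{U}^d_n$. Termination is the Besicovitch Density property: $\mathcal{U}^d_n$ is dense in $\mathcal{S}^c_{n_0}$ for every $n$. The paper proves this classically, and it need not be effective, which is exactly why one jump suffices. The proof has three parts:
\begin{itemize}
\item Lemma~\ref{T-dense} replaces each $Z_\sigma$ with $H^s_{n+1}(Z_\sigma)>2^{-ns}$ by a subset whose $H^s_{n+1}$-value is exactly $2^{-ns}$, obtained via $\DMM$. Then any cover of it by strings properly extending $\sigma$ weighs at least $2^{-ns}$, so the lower bound at scale $n_0$ is preserved with no loss.
\item Lemma~\ref{T-U_n} shows that for such $n$-thin codes an upper bound $H^s_n<d$ transfers to $H^s_{n+1}<d$.
\item Induction on $n$ (Proposition~\ref{BD-ACA}) then gives density.
\end{itemize}

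The computable part is just the search from the proof of $\BCTC$-III. It runs on the $Z_F'$-computable pruned tree and the $Z_F$-computable open codes. Without an argument of this kind, your stage-$k$ search has no guarantee of success.
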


In terms of the \emph{instance-solution} framework, this means Besicovitch's Theorem is computably reducible to the Turing jump ($\mathsf{TJ}$).

\medskip
As outlined above, Besicovitch's original argument suggests that infinitely many jumps of the original closed set $F$ may be required to compute the witnessing subset $E$. The theorems above also stands in contrast to a result by Kjos-Hanssen and Reimann regarding $G_\delta$ sets~\cite{kjos2014finding}. By the full Besicovitch-Davies Theorem, any $G_{\delta}$ set of infinite Hausdorff measure contains a closed subset of positive measure, but the authors exhibited a lightface $\Pi^0_2$ subset of $2^{\omega}$ whose witnessing closed subsets could not be computed from any hyperarithmetic real.

Whether Besicovitch's Theorem is equivalent to $\ACA_0$ remains open. However, we do show that the theorem implies the axioms of $\WWKL_0$, or even $\WKL_0$ if we impose additional structure on the witnessing subset $E$. (See Section~\ref{ssec:besicovitch} for details). 

\medskip
While Besicovitch's Theorem provided the main motivation for this project, the measure theoretic and topological techniques developed along the way lead to interesting avenues of study in their own right. For one, the definition of $s$-dimensional Hausdorff measure involves taking the limit of a sequence of outer measures, called $\delta$-measures. If we fix one of these outer measures, we can ask a variety of questions related to its regularity properties and their reverse mathematical strength. These investigations also have implications for the standard Lebesgue measure on $2^{\omega}$, since any of these $s$-dimensional $\delta$-measures reduces to the Lebesgue measure when $s = 1$.

Several works on the reverse mathematics of measure theory have already explored many of its technicalities. Yu~\cite{yu1987measure} and subsequently Yu and Simpson~\cite{yu1990measure} showed that $\ACA_0$ was necessary to prove that the Lebesgue measure of every open set exists and that $\WWKL_0$ was necessary to prove that the Lebesgue measure is countably additive. Simpson~\cite{simpson2009mass} investigated the reverse math strength of the regularity property that any Borel set contains an $F_{\sigma}$-subset of the same Lebesgue measure \cite{simpson2009mass}. Surprisingly, this property turns out to be independent of many standard subsystems of second order arithmetic, as one can find models of $\RCA_0$ where this regularity property holds, but where the axioms of $\WWKL_0$, $\WKL_0$, and $\ACA_0$ may either hold or fail. 

In this paper, we restrict our attention to closed sets, and to questions that are some variation of the following: given a closed subset $F$, how hard is it to construct a closed subset $E$ of some desired $s$-dimensional $\delta$-measure? In particular, we explore how the answer to this question may change depending on how the closed sets are coded; we introduce the notions of ``standard closed" and ``pruned closed" to describe differences in the structure of these sets' representative trees. Other variables that affect the reverse math strength are the precision with which we specify the target measure of our set $E$, and whether the measure of our starting set $F$ exists in the space. In Section~\ref{ssec:regularity}, we determine the strength of various regularity properties for $\delta$-measures on closed sets with respect to these parameters. (Preliminaries regarding the representations of closed sets and the Hausdorff $\delta$-measure are given in Sections~\ref{ssec:closed} and~\ref{ssec:Hmeas} respectively.) 

\medskip
Our main tool to bound the complexity of Besicovitch's Theorem is Baire Category. In its most straightforward formalization, Baire Category is a rather ``simple" result, being provable in $\RCA_0$. (See \cite{simpson2009subsystems} for a proof). 
However, it was later discovered that the exact manner in which the open sets in the statement are coded is important. Besides the standard coding of an open set as a countable union of basic sets, one can also represent an open set by specifying a countable dense sequence of points in its closed complement. Such a representation is called a ``separably open" set, and was explored by Brown~\cite{brown1990notions}. Brown showed that $\Pi^1_1\text{-}\CA_0$ is required to prove the equivalence of these two representations for arbitrary metric spaces, although $\ACA_0$ suffices for compact spaces. Later, Simpson and Brown~\cite{brown1993baire} showed that replacing the standard open set representations with separably open sets yields a version of Baire Category which is not provable in $\RCA_0$ or even $\WKL_0$. However, this new Baire Category variant did not require the full power of $\ACA_0$, since it could be proven in a strictly weaker subsystem that the authors called $\RCA_0^{+}$. (Mytilinaios and Slaman~\cite{Mytilinaios:1996a} later showed that the version of the Baire Category Theorem for separably open sets is actually weaker than $\RCA_0^{+}$.)

In addition to reverse mathematics, different variations of the Baire Category theorem have also been studied in the context of Weihrauch reducibility. Baire Category lends itself naturally to this setting. In a 2018 paper~\cite{Brattka:2018a}, Brattka, Hendtlass, and Kreuzer explored four main variations of the Baire Category theorem, as well as their connections with some other results on genericity. Two of these variations, which the authors denote $\text{BCT}_0$ and $\text{BCT}_2$, are roughly equivalent to the formalizations of Baire Category using standard open sets and separably open sets respectively. The authors show that $\text{BCT}_2$ is strongly Weihrach equivalent to $\text{BCT}_0'$. This is consistent with the result from the Brown-Simpson paper, in the sense that the coding with separably open sets generates a computationally ``stronger" version of Baire Category than its usual presentation. Brattka et al.\ also observe a similar phenomenon with their formalizations of yet another statement of Baire Category: ``For every countable sequence of closed sets whose union is the entire metric space, there exists at least one set in the sequence with nonempty interior." 

\medskip
In this paper, we will interpret the Baire Category Theorem in the context of a closed subset of Cantor space. Since a closed subset of a complete metric space is itself complete under the same metric, the Baire Category Theorem applies when working with open sets in the subspace topology. We call this variant ``Baire Category Theorem for Closed Sets" ($\BCTC$), and can formally state it as follows:

\begin{quote}
    \textit{Let $X$ be a complete metric space, and let $F \subseteq X$ be a nonempty closed set. Suppose $\{U_n\}_{n \in \N}$ is a sequence of nonempty open sets in $X$ such that each $U_n$ is dense in $F$ (that is, if $N \subseteq X$ is a basic open set such that $N \cap F \neq \emptyset$, then there exists some $x \in N \cap F \cap U_n$). Then $\bigcap_{n \in \N}U_n$ is also dense in $F$.}
\end{quote}

Like the classical Baire Category Theorem, we must be explicit about how the open and closed sets in the statement are coded in order to analyze the reverse mathematical strength. The most natural interpretation for Cantor space, and that which is necessary for the proof of Besicovitch's Theorem, is to code the open sets $U_n$ as unions of cylinder sets and the closed set $F$ as a tree. In this manner, the classical proof of Baire Category in $\RCA_0$ does not translate, since for a given basic open set $N$, there is no computable way to determine whether $N \cap F$ is nonempty. Given the previous result of Brown and Simpson regarding the ``separably open'' version of Baire Category, one might conjecture that $\BCTC$ is still relatively simple from a reverse math perspective. However, we are able to prove the following equivalence:

\begin{theorem*}
    Over $\RCA_0$, $\BCTC$ is equivalent to $\ACA_0$.
\end{theorem*}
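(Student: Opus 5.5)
We prove the two directions separately, working in Cantor space $\Cant$, with $F$ coded by a tree $T\subseteq\Str$ (so $F=[T]$) and each $U_n$ coded as a sequence of strings whose cylinders have union $U_n$.

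\emph{$\ACA_0$ proves $\BCTC$.} Since $\ACA_0$ is available, we can form the set $T^{*}=\{\sigma\in T: [\sigma]\cap F\neq\emptyset\}$. Membership in $T^{*}$ is $\Pi^0_1$ in $T$: by K\"onig's lemma, which is provable in $\ACA_0$, $\sigma$ lies on an infinite path exactly when it has extensions in $T$ of every length. Thus $T^{*}$ is the pruned subtree of $T$ with $[T^{*}]=F$, and every node of $T^{*}$ has a child in $T^{*}$. With this in hand we run the usual $\RCA_0$ proof of Baire Category relative to $T^{*}$. Fix a basic cylinder $[\tau]$ meeting $F$; we may take $\tau\in T^{*}$. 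Build a sequence $\tau=\sigma_0\preceq\sigma_1\preceq\cdots$ of nodes of $T^{*}$ as follows. At stage $n$, density of $U_n$ in $F$ gives a point of $[\sigma_n]\cap F\cap U_n$. That point lies in some cylinder $[\rho]$ enumerated in $U_n$, and some initial segment of it extends both $\sigma_n$ and $\rho$. Searching effectively in $T^{*}$ and in the enumeration of $U_n$, find the least string $\sigma_{n+1}\in T^{*}$ with $\sigma_{n+1}\succ\sigma_n$ and $[\sigma_{n+1}]\subseteq U_n$. The sequence is $\Delta^0_1$ in $T^{*}$ together with the codes, so it exists. Its limit $x$ lies in $F$ because every $\sigma_n\in T^{*}$ and $F$ is closed. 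It lies in every $U_n$ and in $[\tau]$. Hence $\bigcap_n U_n$ is dense in $F$.

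\emph{$\BCTC$ implies $\ACA_0$.} It suffices to show that for every one-to-one $f:\N\to\N$ the range of $f$ exists. Code the range into the paths of a tree: let $F$ consist of the points $0^{m}1\,0^{s}1^{\infty}$ such that $f(s)=m$, together with $0^{m}1\,0^{\infty}$ for every $m$, and $0^{\infty}$. This is $[T]$ for a computable-in-$f$ tree $T$. Then $0^m1$ has an isolated path with a ``late'' $1$ exactly when $m\in\operatorname{ran} f$, and this witness is not computably detectable from $T$.

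The open sets $U_n$ should be chosen so that density forces any point of $\bigcap_n U_n$ inside a suitable cylinder to reveal the range. Concretely, I will let $U_n$ be the union of the cylinders $[0^m1\,0^s1]$ with $f(s)=m$, together with the cylinders $[0^m1\,0^{k}]$ for $k$ large relative to $n$ with no $s<k$ such that $f(s)=m$. I will then apply $\BCTC$ below $[0^m1]$, or better, use a single combined construction: a product or interleaved tree handling all $m$ simultaneously, so that one generic point $x$ decodes $\operatorname{ran} f$ by $\Delta^0_1$ comprehension in $x$.

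I expect this design to be the main obstacle. Each $U_n$ must be a legitimate open code and dense in $F$ provably in $\RCA_0$. At the same time, membership in $\bigcap_n U_n$ must pin down, for every $m$, whether the isolated path exists, and each $U_n$ must exclude the wrong branch. Once the construction is correct, the verification is routine.
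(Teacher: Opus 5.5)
Your forward direction is correct and is essentially the paper's argument: prune $T$ by arithmetic comprehension, then run the $\RCA_0$ construction behind $\BCTC$-III. The reversal, however, has a genuine gap. It is not carried out, and the set $F$ you propose cannot be completed into a proof by any choice of the $U_n$.

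\textbf{Why your $F$ fails.} The natural tree for your $F$ is the set of prefixes of $0^\infty$, of $0^m1\,0^\infty$, and of $0^m1\,0^s1^\infty$ with $f(s)=m$. This tree is $\Delta^0_1$ in $f$, and $\RCA_0$ proves it is pruned: every node $0^k$, $0^m1\,0^j$, $0^m1\,0^s1^j$ has an evident child in the tree. So every instance of $\BCTC$ you build on it is an instance of $\BCTC$-III. The same holds for any interleaving of such pieces, since that stays pruned and $\Delta^0_1$ in $f$. $\BCTC$-III is provable in $\RCA_0$, hence true in the $\omega$-model of computable sets. Now take $f$ computable, one-to-one, with range $\emptyset'$. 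In that model your hypotheses and the conclusion of $\BCTC$ all hold, yet $\operatorname{ran} f$ is absent. So no choice of $U_n$ and no decoding of the generic point can yield the range.

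The same defect shows up as a polarity problem. A path $0^m1\,0^s1^\infty$ is a finite certificate that $m\in\operatorname{ran} f$. That is a $\Sigma^0_1$ fact you already have from $f$. For $\Delta^0_1$ comprehension you need the generic point to supply a finite certificate that $m\notin\operatorname{ran} f$.

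\textbf{The missing idea.} Use a tree that is \emph{not} pruned, whose dead ends hide the range. Its pruned subtree should compute the complement of $\operatorname{ran} f$, which is exactly what $\BCTC$-III cannot supply. The paper does this with columns $(\sigma)_n$:
\begin{itemize}
\item $\sigma\in T$ iff for every $n<|\sigma|$, either $(\sigma)_n$ is all zeros or no $k<|\sigma|$ has $f(k)=n$;
\item $\sigma\in V_n$ iff $(\sigma)_n$ contains a $1$ or some $k<|\sigma|$ has $f(k)=n$.
\end{itemize}
Each $U_n$ is dense in $F=[T]$. If $n=f(k)$, every point of $F$ lies in $U_n$. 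Otherwise, take $X\in N_\sigma\cap F$ and set its bit at position $(n,|(\sigma)_n|)\ge|\sigma|$ to $1$. The result stays in $N_\sigma\cap F$ and lies in $U_n$.

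Since $0^\infty\in F$, $\BCTC$ gives some $Z\in F\cap\bigcap_n U_n$. This single $Z$ satisfies $n\notin\operatorname{ran} f\iff\exists i\,Z((n,i))=1$. Membership in $U_n$ forces a $1$ in column $n$ when $n$ is never enumerated, and membership in $F$ forbids one when it is. This gives a $\Pi^0_1$ definition of the range. Combined with the $\Sigma^0_1$ definition from $f$, $\Delta^0_1$ comprehension yields $\operatorname{ran} f$.
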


This equivalence only holds in the case where the closed set $F$ is ``standard closed." Taking $F$ to be a separably closed or pruned closed set gives a version of $\BCTC$ which is still provable in $\RCA_0.$ See Section~\ref{ssec:bct} for details.

In Section~\ref{ssec:monotone}, we explore how the closed version of Baire Category Theorem has some interesting implications regarding certain monotone functions defined on closed subsets of Cantor space, particularly those which satisfy a certain density property regarding their behavior around their infimum. We define the \emph{Dense Monotone Minimum Principle} ($\DMM$) to be the statement that a function with this property always realizes its infimum. We analyze the reverse math strength of this principle.

\begin{theorem*} \label{thm:intro-DMMin}
    For $\omega$-models of $\RCA_0$, $\DMM$ is equivalent to $\ACA_0$.
\end{theorem*}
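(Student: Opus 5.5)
The plan is to prove both directions through $\BCTC$, using the equivalence $\BCTC \leftrightarrow \ACA_0$ over $\RCA_0$ established in Section~\ref{ssec:bct}. Restricting to $\omega$-models lets the reversal be a purely computability-theoretic statement. Concretely, it suffices to show that for every $X$ there is an $X$-computable instance of $\DMM$ all of whose solutions compute $X'$. An $\omega$-model of $\RCA_0 + \DMM$ is then closed under the Turing jump, hence a model of $\ACA_0$.

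For $\ACA_0 \to \DMM$ I will view a monotone function $f$ on closed subsets of a closed set $F$ through the hyperspace coding: a closed subset $E \subseteq F$ is coded by a tree, hence by a point of Cantor space. The codes of subtrees of the tree of $F$ form a closed subset $\mathcal{K}$ of that space. Let $m = \inf f$, which exists in $\ACA_0$ since it is an arithmetically defined real. I then set
\[
\mathcal{U}_n = \{E \in \mathcal{K} : f(E) < m + 2^{-n}\}.
\]
Monotonicity should make $\mathcal{U}_n$ open, as a union of cylinders, because of the way $f$ is computed from finite approximations of the tree. In particular, $f(E) < m+2^{-n}$ should already be witnessed by a finite initial segment of the code of $E$. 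I expect this to give an arithmetic, hence $\ACA_0$-definable, enumeration of the cylinders making up $\mathcal{U}_n$. The density hypothesis in $\DMM$ is exactly what is needed to make each $\mathcal{U}_n$ dense in $\mathcal{K}$. Then $\BCTC$, provable in $\ACA_0$, gives some $E \in \bigcap_n \mathcal{U}_n$, and for this $E$ we have $f(E) = m$.

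The main obstacle is this step: checking that $\mathcal{U}_n$ really is open in the cylinder-union sense, with a code that exists in $\ACA_0$. This requires that "$f(E)$ small" be an open condition. If monotonicity only gives upper semicontinuity in the opposite direction, I will instead work with the complementary representation. That is, I will use the pruned/standard tree of $E$, so that shrinking $E$ corresponds to extending the code, and so that approximations from above are witnessed at finite stages.

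For the reversal I will adapt the construction used to show $\BCTC \to \ACA_0$. Given $X$, that proof presumably builds an $X$-computable standard closed $F$ and dense open sets $U_n$ such that every point of $\bigcap_n U_n$ computes $X'$. I will turn this into a monotone function on closed subsets, for instance $f(E) = \sum_n 2^{-n}\chi_n(E)$, where $\chi_n(E)$ is $0$ if $E$ meets $U_n$ only on a suitable part of $F$ and $1$ otherwise. The weighting by $2^{-n}$ keeps $f$ $X$-computable as a real-valued function on codes. I will also arrange that $f(E)$ attains the infimum $0$ only when $E$ is a singleton or a small set lying inside $\bigcap_n U_n$. Density of the $U_n$ in $F$ then translates into the density property of $\DMM$, and from any minimizer $E$, together with $X$, one computes a point of $\bigcap_n U_n$, hence $X'$.

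I expect the delicate points to be two: making $f$ genuinely monotone under inclusion, and ensuring the minimizer's code yields an actual point rather than just a tree. For the latter, $X'$ may be needed to find a path, so I will arrange $F$ such that minimizers are computably pruned.
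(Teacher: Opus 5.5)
Your forward direction is the paper's argument in substance, provided you apply it to the right space. The paper routes it through $\BCTC$-N via Theorem~\ref{thm:DMMin}, but $\BCTC$ works too. In $\DMM$ the function lives on the \emph{points} of $F=[T]$ for an arbitrary tree $T$: $f(X)=\lim_k\tilde f(X\upharpoonright_k)$, where $\tilde f\colon T\to\mathbb{R}$ decreases along extensions. The infimum $\alpha$ is a hypothesis, not something you need to compute. There is no hyperspace of closed subsets and no monotonicity under inclusion; tree codes appear only later, as one choice of $T$ when $\DMM$ is applied to $H^s_n$. Read this way, your openness worry disappears. Since $f(X)=\inf_k\tilde f(X\upharpoonright_k)$, the condition $f(X)<\alpha+2^{-n}$ is witnessed by an initial segment. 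The paper's code $V_n=\{\sigma:\sigma\notin T\vee\exists k,m\le|\sigma|\ \tilde f(\sigma\upharpoonright_k)<_m\alpha+2^{-n}\}$ is even $\Delta^0_1$.

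The genuine gap is the reversal, which rests on the same misreading and is not actually carried out. You never specify $\chi_n$, and you leave open exactly what needs proof:
\begin{itemize}
\item that $f$ comes from a monotone decreasing $\tilde f$ on some tree;
\item that $f$ is dense above its infimum;
\item how a minimizer yields a point of $\bigcap_n U_n$.
\end{itemize}
In your space $\mathcal K$ these are real obstacles. Every nonempty basic open subset of $\mathcal K$ contains a code of the empty set (pad with zeros). So no nonempty sublevel set $\{f<r\}$, which must be open, can consist only of sets meeting $U_n$, or only of nonempty sets. Moreover, a path through a non-pruned tree code is not computable from the code in general. That is exactly the kind of power you are trying to derive.

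The missing idea is to apply $\DMM$ to the tree of $F$ itself. Restrict it to $T_\nu=\{\sigma\in T:\sigma\subseteq\nu\vee\nu\subseteq\sigma\}$ and let $\tilde f$ record which $V_n$ a string has entered, so that a minimizer \emph{is} the desired point. The paper takes $\tilde f(\sigma)=2^{-k-1}$ for the largest $k<|\sigma|$ with $\sigma\in V_k$, and $\tilde f(\sigma)=1$ if there is none. Then the infimum is $0$, density of the $U_n$ gives density above $0$, and $f(Z)=0$ puts $Z$ in $U_k$ for arbitrarily large $k$. Hence $Z$ lies in every $U_n$ if the sequence is nested. This proves $\DMM\to\BCTC$-N over $\RCA_0$. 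The $\omega$-equivalence of $\BCTC$-N with $\BCTC$, together with Theorem~\ref{thm:BCTC}, then gives $\ACA_0$.

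Your $2^{-n}$-weighting would even avoid the nesting step if you applied it to points. Set $\tilde f(\sigma)=2^{-|\sigma|}+\sum\{2^{-n-1}:n<|\sigma|,\ \sigma\notin V_n\}$. This is monotone decreasing because the $V_n$ are upward closed, and its limit vanishes exactly on $\bigcap_n U_n$. Density above $0$ holds because each finite intersection $U_0\cap\dots\cap U_N$ is dense in $F$. That follows from finitely many applications of the hypothesis, which is legitimate in an $\omega$-model. Feeding in the instance from the proof of Theorem~\ref{thm:BCTC}, any solution makes the range of the given injection $\Delta^0_1$.
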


One of the most natural examples of a monotone function with this density property is precisely the $s$-dimensional Hausdorff $\delta$-measure, when restricted to the set of closed sets whose measures satisfy some fixed lower bound. 
As such, $\DMM$ also serves an important role in the intermediate steps of the Besicovitch Theorem proof.

\medskip
\subsection*{Formalization and working with \texorpdfstring{$\omega$}{omega}-models}
When working over $\RCA_0$, some of our proofs, reversals in particular,  will only work for $\omega$-models. This happens for various reasons -- in some cases $\RCA_0$ is too weak to formalize some concepts (like Hausdorff measures), in other cases we are using techniques that are not provable in $\RCA_0$, such as the infinitary pigeonhole principle or induction over higher order formulas. We could work over a stronger base theory to avoid formalization issues, but in most cases this base theory would have to be $\ACA_0$, and we would lose the fine-grain picture we are able to obtain when working with $\omega$-models. Whenever this is the case, we will state our results using \emph{$\omega$-reducibility}, $\leq_\omega$. When adapted to the \emph{instance-solution} framework, many results will actually yield the stronger \emph{computable reducibility}, $\leq_c$ (see~\cite{hirschfeldt2015slicing} or\cite{Dzhafarov:2022a} for background).   

In any case, $\ACA_0$ is strong enough to formalize and prove all the statements we are developing, so that in Section~\ref{ssec:besicovitch} we can use them to show that Besicovitch's Theorem is provable in $\ACA_0$.

\section{Background and Preliminaries}

\subsection{Subsystems of Second Order Arithmetic}

We assume some standard background in reverse mathematics and second order arithmetic; in particular, we will assume that the reader is familiar with the definitions and key properties of the subsystems $\RCA_0,$ $\WKL_0$, $\WWKL_0$ and $\ACA_0$. More information can be found in any standard text on reverse mathematics (see, for example, \cite{simpson2009subsystems}, \cite{hirschfeldt2015slicing}, and~\cite{Dzhafarov:2022a}).  

As is customary, $\omega$ will denote the set of natural numbers $\{0,1,2,\dots\}$, $2^{<\omega}$ will denote the set of finite binary strings, and $2^\omega$ will denote the set of infinite binary stings, but when working with formal statements in subsystems of second order arithmetic $\SOA_2$, we will use symbols $\N$, $\Str$, and $\Cant$, respectively, to denote the formalized versions instead.

\subsection{Closed Sets and their Representations}
\label{ssec:closed}

In this paper, we will concern ourselves with measure and topological results in Cantor space $2^{\omega}$. We will use the notation $N_{\sigma}$ to denote the basic open set 
\[
N_{\sigma} = \{X \in 2^{\omega} : X \supseteq \sigma \},
\]
where $\sigma \in 2^{<\omega}$. It is likely that many of the (non-measure theoretic) results in this work can be generalized to arbitrary compact metric spaces, although we have not formalized any details here. Regarding Hausdorff measure, Besicovitch's Theorem becomes significantly harder  to prove for arbitrary compact metric spaces, due to the absence of a canonical ``net structure'' that both Cantor space and Euclidean possess, and determining a good upper bound on the complexity of the theorem remains an open problem. 

\medskip
To code closed subsets $F \subseteq \Cant$ as a subset of $\N$, we will almost exclusively use trees, i.e., subsets $T \subseteq \Str$ closed under initial segments. A \textit{pruned tree} has the additional property that for every $\sigma \in T$, at least one of $\sigma^{\frown}0 \in T$ or $\sigma^{\frown}1 \in T$. The set of all (infinite) paths through a tree $T$ is denoted by $[T]$.

Classically, any closed set $F \subseteq 2^{\omega}$ can be represented by some tree $T \subseteq 2^{<\omega}$. However, in second order arithmetic we equate a closed set with a tree coding it, which may or may not exist. The following definitions should therefore be interpreted within $\RCA_0$.

\begin{definition}
    A set $F \subseteq \Cant$ is 
    \begin{enumerate}[(a)]
        \item \textit{standard closed} if there exists a tree $T \subseteq \Str$ with $F = [T]$,
        \item \textit{pruned closed} if there exists a pruned tree $T \subseteq \Str$ with $F = [T]$.
    \end{enumerate}
\end{definition}

Certainly any closed set which has a pruned tree representation also has a standard tree representation, since a pruned tree is just a specific type of tree. However, depending on the axiom system being used, the converse does not always hold; we will explore the reverse math strength of the existence of pruned tree representations later in this section. Note that unlike the standard tree representation, the pruned tree representation for a closed set $F \subseteq \Cant$ is unique, consisting of exactly the strings $\sigma \in \Str$ which are initial segments of some element $X \in F$.

Also, later in this work it will be convenient to have a name for closed subsets whose corresponding tree representations are infinite. If we are working in an axiom system equivalent to or stronger than $\WKL_0$, this is equivalent to the closed set being nonempty. However, to avoid any ambiguity, we make the following definition.

\begin{definition}
    Let $F$ be a standard closed or pruned closed set in $\Cant.$ We call $F$ \textit{nontrivial} if its corresponding tree (or pruned tree) in $\Str$ is infinite.
\end{definition}

An alternative way to code a closed set is by specifying a countable sequence of elements of $\Cant$ which are dense in that set, in the following sense:

\begin{definition}
Let $S = \langle X_n \rangle_{n \in \N}$ be a sequence of elements of $\Cant$. The set $\bar{S}$ consists of all $X \in \Cant$ with the property that
\[
\forall k \: \exists n  \: \forall i < k \; X_n(i) = X(i).
\]
A set $F \subseteq \Cant$ is called \textit{separably closed} if it is empty, or if there exists a sequence $S$ of elements of $\Cant$ such that $F = \bar{S}$.
\end{definition}

When it comes to representing open sets in $\Cant$, we will use the notion defined below. This corresponds naturally to the idea of an open set as a union of cylinder sets.

\begin{definition}
    A set $U \subseteq \Cant$ is called $\textit{open}$ if there exists a set $V \subseteq \Str$ with the property that for all $X \in \Cant$,
    \[
    X \in U \iff \exists \sigma \in V \: \sigma \subseteq X.
    \]
    Such a set $V$ is called a \textit{code} for $U$. Without loss of generality, we may assume that $V$ is closed upward under extensions: that is, if $\sigma \in V$ and $\tau \supseteq \sigma$, then $\tau \in V$. 
\end{definition}

One can also represent an open set as the complement of a separably closed set. While we will not work with this representation here, it has been previously studied in the context of Baire Category, as mentioned in the introduction.

\medskip
It turns out that the ability to translate between different representations of the same closed set (or open set) is not trivial, as the results below will demonstrate. In the statements of these results, we will abuse language slightly for the sake of efficiency, as we will conflate the existence of closed subsets with the existence of their appropriate representations. For example, the statement ``Every separably closed set is pruned closed" is technically shorthand for ``For every countable sequence $S$ of elements of $\Cant$, there exists a pruned tree $T \subseteq \Str$ such that for all $X \in \Cant$, $X \in \bar{S}$ if and only if $X \in [T]$."

First, we note that a pruned tree representation is the strongest in a sense, since it can generate both of the remaining representations.

\begin{proposition} \label{RCA-P}The following are provable in $\RCA_0:$ 
\begin{enumerate} [(i)]
    \item Every pruned closed set in $\Cant$ is standard closed.
    \item Every pruned closed set in $\Cant$ is separably closed.
\end{enumerate}
\end{proposition}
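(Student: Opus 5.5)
Part (i) is immediate from the definitions: a pruned tree $T$ is in particular a tree, and the same set $T$ witnesses $F=[T]$. No set existence is needed beyond the given $T$.

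For (ii), let $T$ be a pruned tree with $F=[T]$. If $T$ is empty then $F$ is empty, hence separably closed by definition. Otherwise the empty string lies in $T$, and I will build a sequence $S=\langle X_\sigma\rangle_{\sigma\in T}$, indexed by an enumeration of $T$ (which exists by $\Delta^0_1$ comprehension). For each $\sigma\in T$ let $X_\sigma$ be the \emph{leftmost extension} of $\sigma$ in $T$. We define $X_\sigma(i)=\sigma(i)$ for $i<|\sigma|$; beyond $|\sigma|$ we follow $0$ whenever the current string extended by $0$ lies in $T$, and $1$ otherwise. Since $T$ is pruned, the recursion never gets stuck. The whole sequence is obtained uniformly by primitive recursion relative to $T$, so $S$ exists in $\RCA_0$. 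A straightforward $\Sigma^0_0$ induction on length, using prunedness at each step, shows that every initial segment of $X_\sigma$ lies in $T$, so $X_\sigma\in[T]$.

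It remains to verify $\bar S=[T]$. First take $X\in[T]$ and $k\in\N$. Then $\sigma=X\upharpoonright k\in T$, and $X_\sigma$ agrees with $X$below $k$; this is the density condition. Conversely, take $X\in\bar S$ and $k\in\N$. There is some $\sigma$ with $X_\sigma\upharpoonright k=X\upharpoonright k$. Since $X_\sigma\in[T]$ and $T$ is closed under initial segments, $X\upharpoonright k\in T$. As $k$ was arbitrary, $X\in[T]$.

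The only delicate point is ensuring each $X_\sigma$ is a genuine path. This is precisely where prunedness is used; for a non-pruned tree, the leftmost-extension recursion could enter a dead end. That step is the main obstacle, but it is a bounded induction and goes through in $\RCA_0$.
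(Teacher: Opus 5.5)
Your proof is correct and follows the paper's argument essentially verbatim. Part (i) is trivial, and for (ii) you enumerate the pruned tree, take the leftmost extension of each node through $T$ by primitive recursion, and check $\bar S = [T]$ in both directions. The one point you leave implicit is that a nonempty pruned tree is infinite, which is what makes an enumeration of $T$ by $\N$ available in $\RCA_0$; the paper makes this explicit by splitting into the cases where $T$ is finite (hence empty) and where $T$ is infinite.
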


\begin{proof}[Proof $($sketch$)$]
    (i) is clear.
    For (ii), let $T$ be a pruned tree. If $T$ is finite, it must be empty and the corresponding closed set $F = [T]$ is empty, too, so trivially separably closed.
    So let $T$ be infinite. It is provable in $\RCA_0$ (see e.g.~\cite{Dzhafarov:2022a}) that there is an enumeration $\{\sigma_n\}_{n \in \N}$ of the elements of $T$. (Strictly speaking, $T$ is the range of a one-one function from $\N$ onto $T$.) Using primitive recursion, we can define a sequence $S = \langle X_n \rangle_{n \in \N}$ in $\Cant$ where $X_n$ is the leftmost extension of the string $\sigma_n$ through $T$. It is then straightforward to verify that $\bar{S}$ and $[T]$ define the same closed set.
\end{proof}

In contrast to the result above, obtaining a pruned tree representation from a standard or separately closed set, or translating between the latter two representations, requires more axiomatic strength. Variations of items (iii) and (iv) of the theorem below have been proven before for arbitrary compact metric spaces (see \cite{brown1990notions}). 

\begin{theorem} \label{ACA-T}
    Over $\RCA_0$, the following statements are all equivalent to $\ACA_0$:
    \begin{enumerate}[(i)]
        \item Every standard closed set in $\Cant$ is pruned closed. 
        \item Every separably closed set in $\Cant$ is pruned closed. 
        \item Every standard closed set in $\Cant$ is separably closed. 
        \item Every separably closed set in $\Cant$ is standard closed. 
    \end{enumerate}
\end{theorem}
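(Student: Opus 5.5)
We prove the forward directions in $\ACA_0$ first and then the four reversals, each by coding the range of a one-one function $f\colon\N\to\N$.

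\emph{Forward directions.} Assume $\ACA_0$.

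For (i), let $T$ be a tree and define
\[
P=\{\sigma\in T : \forall n\,\exists \tau\in T\,(\tau\supseteq\sigma \wedge |\tau|=|\sigma|+n)\},
\]
the set of strings with arbitrarily long extensions in $T$. This set is arithmetical, so it exists by arithmetic comprehension. By K\"onig's lemma, which is provable in $\ACA_0$, $\sigma\in P$ iff $\sigma$ extends to a path through $T$. Hence $P$ is a pruned tree with $[P]=[T]$.

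For (ii), let $S=\langle X_n\rangle$ and set
\[
P=\{\sigma : \forall k\,\exists n\,(\sigma\subseteq X_n \text{ and } X_n{\restriction}k \text{ extends or is extended by } \sigma)\}.
\]
More simply, $\sigma\in P$ iff $\forall k\ge|\sigma|\,\exists n\, X_n{\restriction}|\sigma|=\sigma$. Since $\sigma$ only needs to be an initial segment of some $X_n$, this reduces to $\exists n\,\sigma\subseteq X_n$. That condition is $\Sigma^0_1$, so $P$ exists even in $\RCA_0$ plus $\Sigma^0_1$ comprehension. $P$ is a pruned tree, and one checks $[P]=\bar S$.

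Items (iii) and (iv) then follow from (i) and (ii) together with Proposition~\ref{RCA-P}: standard $\to$ pruned $\to$ separable, and separable $\to$ pruned $\to$ standard.

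\emph{Reversals.} Fix a one-one $f$; we want the range of $f$ to exist.

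For (i), take the tree $T$ consisting of the strings $0^m 1 0^k$ such that no $j<m+k+1$ (or a similar bound) satisfies $f(j)=m$ with $j$ appearing after the relevant length, together with all strings $0^m$. Concretely, put $0^m10^k\in T$ iff $\neg\exists j<k\,f(j)=m$. Then $0^m1$ extends to a path iff $m\notin\operatorname{ran} f$. The pruned tree of $[T]$ contains $0^m1$ exactly when $m\notin\operatorname{ran}f$, so it computes the complement of the range.

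For (iv), use the sequence $X_n=0^{f(n)}10^\omega$ together with $0^\omega$. A standard tree $T$ for $\bar S$ must omit, for each $m\notin\operatorname{ran}f$, some string $0^m10^k$. That alone does not decide membership in the range, since it needs the pruned information. This reversal is the main obstacle. The fix is to use instead $X_n=0^{f(n)}1^\omega$ and $Y_m=0^m1^m0^\omega$: then $0^m1^{m+1}$ lies on a point of $\bar S$ iff $m\in\operatorname{ran}f$. But a standard tree can still contain nonextendible strings, so I would instead make each potential member isolated with a finite witness. Let $X_n=0^{f(n)}10^\omega$ and keep $0^\omega$; if $m\notin\operatorname{ran} f$ then $0^m10^\omega\notin\bar S$, so $T$ must lack some $0^m10^k$. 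Then $m\in\operatorname{ran} f$ iff $\exists n\,f(n)=m$, and $m\notin\operatorname{ran}f$ iff $\exists k\,0^m10^k\notin T$. So the range is $\Delta^0_1$ and exists by $\Delta^0_1$ comprehension.

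For (iii), take the same tree as in (i), let $S$ be the dense sequence, and put $m\notin\operatorname{ran}f$ iff $\exists n\,0^m1\subseteq X_n$. This is again $\Sigma^0_1$, while membership in the range is also $\Sigma^0_1$, so $\Delta^0_1$ comprehension gives the range.

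For (ii), the same sequence as in (iv) works: $m\in\operatorname{ran}f$ iff $0^m1\in P$, so the range is decidable from the pruned tree $P$.
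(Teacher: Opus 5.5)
Your proof is correct. The forward directions are the paper's:
\begin{itemize}
\item For (i), you use the tree of strings with arbitrarily long extensions in $T$. You get prunedness from K\"onig's lemma in $\ACA_0$; the paper gets it from a direct length-counting argument that needs no compactness.
\item For (ii), you use the tree $\{\sigma : \exists n\,\sigma\subseteq X_n\}$.
\item For (iii) and (iv), you appeal to Proposition~\ref{RCA-P}.
\end{itemize}

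The reversals are where you differ. The paper only adapts Brown's argument for (iii) and (iv). The reversals of (i) and (ii) then come for free, since over $\RCA_0$ item (i) implies (iii) and item (ii) implies (iv) by Proposition~\ref{RCA-P}. You instead code $\operatorname{ran} f$ directly in all four cases. Your tree for (i) and (iii) is essentially the one the paper later recalls from its (iii) reversal; the paper allows arbitrary bits after $0^n1$, while you allow only zeros. Your sequence $0^\omega, 0^{f(n)}10^\omega$ for (ii) and (iv) plays the dual role. This makes your argument self-contained, but your separate reversals of (i) and (ii) are redundant and could be replaced by that one-line reduction.

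Two points of presentation need fixing. First, in (iv), your intermediate claim that omitting some $0^m10^k$ ``does not decide membership in the range'' is wrong. Delete it along with the detour through $0^{f(n)}1^\omega$ and $0^m1^m0^\omega$. The same observation, $m\in\operatorname{ran} f \iff \forall k\,(0^m10^k\in T)$, combined with the $\Sigma^0_1$ definition of the range, is exactly your final $\Delta^0_1$ argument.

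Second, in (i) and (ii), say explicitly how prunedness is used. If $0^m1$ lies in the pruned tree $P$, the leftmost path through $P$ above $0^m1$ exists by primitive recursion, and hence lies in $[T]$ (respectively $\bar S$). This is what forces $m\notin\operatorname{ran} f$ (respectively $m\in\operatorname{ran} f$).
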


\begin{proof}
    We show that $\ACA_0$ is sufficient to prove both (i) and (ii). Proposition~\ref{RCA-P} then implies that it can also prove (iii) and (iv). The proof of the reversal for (iii) and (iv) in~\cite{brown1990notions} can be adapted to our setting to complete the proof of the theorem (see also~\cite{Gruner:2026a}).
    
     $\ACA_0 \implies (i)$:
    Let $T \subseteq \Str$ be a tree. Using arithmetic comprehension, we can define the tree $\tilde{T}$ by
    \[
    \tilde{T} = \{\sigma \in T: \forall n \geq |\sigma| \: \exists \tau \supseteq \sigma \text{ such that } |\tau| = n \text{ and } \tau \in T\}.
    \]
    $\tilde{T}$ is closed under initial segments. It is also pruned: suppose to the contrary that there was some $\sigma \in \tilde{T}$ for which neither of $\sigma^\frown 0$ or $\sigma^\frown 1$ are in $\tilde{T}$. Unpacking the definition, this would imply that there was some $n_0 \geq |\sigma^\frown 0|$ for which no length-$n_0$ extensions of $\sigma^\frown 0$ were in $T$; we can choose $n_1$ for $\sigma^\frown 1$ similarly. Consider $n= \max\{n_0,n_1\}$. Since $\sigma \in \tilde{T}$, we know there is some length-$n$ extension $\tau \supseteq \sigma$. Since $n$ must be strictly greater
than $|\sigma|$, this string $\tau$ must extend one of $\sigma^\frown 0$ or $\sigma^\frown 1$.
In either case, $\tau\restriction_{n_i}$ would be a length-$n_i$ extension of $\sigma^\frown i$, and this element would be
in $T$ by initial segment closure, contradiction.
    
    It is  straightforward to verify that $[T]$ and $[\tilde{T}]$ represent the same closed set.

\medskip
    $\ACA_0 \implies (ii)$:
     If we have a separably closed set which is empty, then the empty tree $T$ is trivially a pruned tree representing the same closed set. So assume our separably closed set $F$ is nonempty, and let $S = \langle X_n \rangle_{n \in \N}$ be a sequence representing $F$. Using arithmetic comprehension, we can define a pruned tree $T$ by collecting all the finite initial segments of the elements in the sequence $S$:
     \[
     T = \{ \sigma \in \Str: \exists n \text{ such that } \sigma \subseteq X_n \}.
     \]
    It is clear that $T$ is closed under initial segments. Moreover, if $\sigma \in T$ via some $X_n$, then $\sigma^\frown X_n(|\sigma|)$ is also in $T$. Thus, $T$ is a pruned tree. 

    Again, checking that $T$ represents the same closed set as $\bar{S}$ is straightforward.
\end{proof}

\subsection{Spaces of closed subsets of a closed set}

While we will briefly look at separably closed sets in the context of the Baire Category theorem (see Section~\ref{ssec:bct}), most of this paper will be concerned with standard and pruned closed sets (i.e. those which are represented by trees). These representations provide a natural way of approximating the sets' measure, as we will demonstrate in the following subsection. If we are given a particular tree representation for a standard/pruned closed set $F$, we would like a way to code the set of all nontrivial closed subsets of $F$. This can be done using trees, too. 

We fix a standard bijection between $\N$ and $\Str$ which enumerates the elements of $\Str$ first by length and then lexicographically. (It is straightforward to verify that this exists in $\RCA_0$.)

\begin{definition}
    Given an element $X \in \Cant$ and a natural number $n \in \N$, we let $X^{\leq n}$ denote the initial segment of $X$ with length $2^{n+1}-1$. If a finite string $\nu \in \Str$ is such that $|\nu| \geq 2^{n+1} - 1$, then we use the notation $\nu^{\leq n}$ similarly. 
\end{definition}

This notation is convenient since $X^{\leq n}(\sigma)$ will be defined for a particular $\sigma$ (under the bijection mentioned above) if and only if $|\sigma| \leq n$. 

\begin{definition} \label{ST}
    Let $T \subseteq \Str$ be a tree. Let $S_T \subseteq \Str$ be the tree consisting of all $\nu \in \Str$ which satisfy the following properties:
    \begin{enumerate}[(1)]
        \item For all $\sigma < |\nu|$ with $\nu(\sigma) = 1$, we also have $\nu(\tau) = 1$ for all $\tau \subseteq \sigma$.
        \item For all $\sigma <|\nu|$ with $\nu(\sigma) = 1$, we have $\sigma \in T$.
    \end{enumerate}
\end{definition}

If we have an element $Z \in [S_T]$, then we can identify $Z$ with the subset $T_Z \subseteq \Str$ consisting of exactly the strings $\sigma$ for which $Z(\sigma) = 1$; we will say that $Z$ \textit{represents} or \textit{codes} the tree $T_Z$. Property (1) in the definition of $S_T$ guarantees that $Z$ does indeed code a tree, while property (2) guarantees that this tree will be a subset of $T$. 

In $2^\omega$, if $Z \in [S_T]$ then the closed set $[T_Z]$ will be a subset of the closed set $[T]$. Conversely, if we have a tree $S \subseteq 2^{<\omega}$ such that $[S] \subseteq [T]$, then the element $Z$ representing the tree $S \cap T \subseteq 2^{<\omega}$ will be an element of $[S_T]$ which represents the same closed set as $[S]$. Thus, the paths through $S_T$ do form a reasonable correspondence with the closed subsets of $[T]$.

At times, it will be convenient to work with a variant of $S_T$ whose infinite paths all correspond to pruned trees. Such a variant is defined below.

\begin{definition} 
    Let $T \subseteq \Str$ be a tree. Let $\tilde{S}_T \subseteq \Str$ be the tree consisting of all $\nu \in \Str$ which satisfy the conditions for $S_T$ in the previous definition, as well as the following additional condition:
    \begin{enumerate}[(3)]
        \item For all $\sigma < |\nu|$ such that $\nu(\sigma) = 1$ and such that $\nu(\sigma^{\frown}0)$ and $\nu(\sigma^\frown1)$ are both defined, we have at least one of $\nu(\sigma^{\frown}0) = 1$ or $\nu(\sigma^{\frown}1) = 1$.        
    \end{enumerate}
    
\end{definition}

In light of these definitions, we can impose a completely metrizable topology on the space of nontrivial closed subsets of a particular standard/pruned closed set $F \subseteq 2^{\omega}$. If $T$ is a tree with $F = [T]$, then we can simply restrict the usual topology and metric on $2^{\omega}$ to the closed subset $[S_T]$ or $[\tilde{S}_T]$. In fact, the topology generated in this way is closely related to the Hausdorff metric on closed subsets of $2^{\omega}$ (see~\cite{Gruner:2026a}).

\subsection{Hausdorff Measure and its Representations}
\label{ssec:Hmeas}

The notion of \textit{Hausdorff measure} on a metric space is a generalization of Lebesgue measure. This measure is the main object of study in the subfield of analysis known as geometric measure theory, of which a standard reference is \cite{falconer2013fractal}. 

The classical concept of Hausdorff measure is defined as a limit of a sequence of intermediate outer measures, which can be defined in the following way for Cantor space.

\begin{definition}
    Let $E \subseteq 2^{\omega}$ be a set, let $\delta$ be a positive real number, and $s$ a nonnegative real number. We call a countable or finite collection of cylinder sets $\{N_{\sigma_i}\}_{i < N}$ a \textit{$\delta$-cover} for $E$ if $E \subseteq \bigcup_{i < N}N_{\sigma_i}$ and each $\sigma_i$ satisfies $2^{-|\sigma_i|} \leq \delta$.  (Here, $N$ can either be a finite positive integer or $\omega$.) The \textit{s-dimensional Hausdorff $\delta$-measure} $\mathcal{H}^s_{\delta}(E)$ is the quantity
    \[
    \mathcal{H}^s_{\delta}(E) = \inf\left \{ \sum_{i < N}2^{-s|\sigma_i|}: \{N_{\sigma_i}\}_{i < N} \text{ is a $\delta$-cover for } E \right \}.
    \]
\end{definition}

If $s = 1$, then $\mathcal{H}^s_{\delta}(E)$ is simply the standard Lebesgue measure $\mu(E)$ on $2^{\omega}$ regardless of the value of $\delta$. (Any cover $\{N_{\sigma_i}\}_{i < N}$ of $E$ by cylinder sets can be converted to a $\delta$-cover of $E$ without changing the value of $\sum_{i < N}2^{-|\sigma_i|}$.)

On the other hand, if $s < 1$, then different values of $\delta$ can potentially yield different values of $\mathcal{H}^s_{\delta}(E)$. However, note that if $\delta_0 > \delta_1$, then the collection of $\delta_1$-covers for a set $E$ will be a subset of the collection of $\delta_0$-covers, which implies that $\mathcal{H}^s_{\delta_0}(E) \leq \mathcal{H}^s_{\delta_1}(E)$. Therefore, the limit in the definition below is (classically) well-defined, although it can potentially be infinite. 

\begin{definition} Let  $E \subseteq 2^{\omega}$, and let $s$ be a nonnegative number. Then the \textit{s-dimensional Hausdorff measure} $\mathcal{H}^s(E)$ is defined by 
\[
\mathcal{H}^s(E) = \lim_{\delta \to 0}\mathcal{H}^s_{\delta}(E).
\]    
\end{definition}

The fact that the definition of Hausdorff measure involves checking infinitely many collections of open sets makes it somewhat nonconstructive and difficult to work with in a reverse math context. However, we can develop a more explicit measure function in the case where $E$ is closed, which will also correspond nicely to a notion of monotone functions in presented in Section~\ref{ssec:regularity}. 

As we show below, this function will be equivalent to the usual definition of Hausdorff measure as long as we are in a sufficiently strong axiom system.

\begin{definition}
    Let $V \subseteq 2^{<\omega}$ be finite, and let $s \geq 0$. Define the \textit{$s$-weight} of $V$ by 
    $$W_s(V) = \sum_{\sigma \in V}2^{-s|\sigma|}$$
    with $W_s(\emptyset)$ defined to be 0.
\end{definition}

\begin{definition} \label{def:H-tilde}
    Let $S \subseteq 2^{<\omega}$ denote the set $S_T$ from Definition \ref{ST}, where $T$ is taken to be all of $2^{<\omega}$. Suppose $s \geq 0$ and $n \in \omega$. We define a function $\tilde{H}^s_{n}: S \to \mathbb{R}$ as follows. 

Given $\nu \in S$ of length at least 1, we first find the largest integer $m$ for which $|\nu| \geq 2^{m+1} - 1$, i.e. for which $\nu^{\leq m}$ is defined. (We handle the case where $\nu$ is the empty string $\langle \rangle$ separately). We then proceed in cases depending on the value of $m$. 

\begin{itemize}
\item \textit{Case 1:} If $m \geq n$, we define 
\[
\tilde{H}^s_n(\nu) = \min \{ W_s(V)\},
\]
where the minimum is taken over all (finitely many) $V \subseteq 2^{<\omega}$ with the following properties:
\begin{enumerate}[(a)]
    \item For all $\sigma \in V$, $n \leq |\sigma| \leq m$. \label{item_a}
    \item For all $\sigma$ such that $|\sigma| = m$ and $\nu(\sigma) = 1$, there is some $\tau \in V$ with $\tau \subseteq \sigma$. \label{item_b}
\end{enumerate}

\item \textit{Case 2:} If $m < n$, or if $\nu = \langle \rangle$,  we define
\[
\tilde{H}^s_n(\nu) = 2^n\cdot 2^{-sn} = 2^{(1-s)n}.
\]
\end{itemize}
\end{definition}

It is a straightforward exercise to show that $\tilde{H}^s_n$ is computable using $s$ and $n$ as parameters. (The minimum of the, finitely many, $W_s(V)$ is computable using the definition of order and equality on real numbers when represented via Cauchy sequences.)
Hence the formalized version will make sense in any $\omega$-model of $\RCA_0$.

Observe further that if $\nu_0, \nu_1 \in S$ and $\nu_0 \subseteq \nu_1$, then $\tilde{H}^s_n(\nu_0) \geq \tilde{H}^s_n(\nu_1)$ (which is relevant to the existence of the above limit). For one, the value of $m$ for $\nu_0$ will be at most that of the value for $\nu_1$. If both strings fall under Case 1 of the definition, then whichever set $V_0$ realizes the minimum $W_s(V)$ for $\nu_0$ will still be considered among the sets $V$ for $\nu_1$, due to the ``closure under initial segments" property for the strings in $S$. Therefore, the minimum in the definition for $\tilde{H}^s_n(\nu_1)$ cannot increase from its value for $\nu_0$. 

Similarly, the set $V$ which consists of all length-$n$ strings in $2^{<\omega}$ will always be under consideration in the Case 1 definition, and $W_s(V)$ for this $V$ is precisely $2^{(1-s)n}$. Thus, the desired monotonicity still holds in the case where $\nu_0$ falls under Case 2 while $\nu_1$ falls under Case 1. If both strings fall under Case 2, then the claim is obvious.

Given this monotonicity, we are justified in making the following definition, using the function $\tilde{H}^s_n$ and the set $S$ from Definition~\ref{def:H-tilde}.

\begin{definition}
The function  $H^s_n: [S] \to \mathbb{R}_{\geq 0}$ is defined by
\[
H^s_n(Z) = \lim_{k \to \infty}\tilde{H}^s_{n}(Z \upharpoonright_k).
\]
\end{definition}

Under classical, non-restricted mathematics, the function $H^s_n$ is always well-defined for elements of $[S]$ and is precisely the $s$-dimensional Hausdorff $2^{-n}$-measure. However, given the properties of compactness and convergence necessary for the proof to proceed, we must be a bit careful when working with the formalized version in weak subsystems of second order arithmetic. 

\medskip
We would like to show that, under sufficient axiomatic strength, the definition of $H^s_n$ is consistent with that of $\mathcal{H}^s_{2^{-n}}$ for closed sets. 
We can then formulate and understand statements about the overall Hausdorff measure $\mathcal{H}^s(F)$ by interpreting it as the value of 
\[
\lim_{n \to \infty}H^s_n(Z) = \lim_{n \to \infty}\mathcal{H}^s_{2^{-n}}(F).
\]

We begin with the easier direction of this correspondence, which does not require any advanced axiomatic strength. 

\begin{proposition} \label{prop:RCA-meas} The following is satisfied in all $\omega$-models of $\RCA_0$: Let $E \subseteq \Cant$ be a standard closed set, and let $Z \in [S]$ be a code for a tree corresponding to $E$. Let $s \geq 0$ be a real number, and let $n \in \N$. Then for any initial segment $\nu \subseteq Z$, there is some $2^{-n}$-cover $\{N_{\sigma}\}_{\sigma \in V}$ of $E$ for which 
\[
\sum_{\sigma \in V}2^{-s|\sigma|} \leq \tilde{H}^s_n(\nu).
\]
\end{proposition}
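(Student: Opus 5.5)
We split into the two cases of Definition~\ref{def:H-tilde} for $\nu$, exhibit an explicit finite set $V$ in each, and check that it is a $2^{-n}$-cover of $E$ with the required weight. Since $V$ is finite and given explicitly from $\nu$, its existence in an $\omega$-model of $\RCA_0$ is immediate. The weight $W_s(V)$ is a finite sum of computable reals, so comparing it with $\tilde{H}^s_n(\nu)$ raises no formalization issue.

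\emph{Case 2} ($\nu = \langle\rangle$ or $m < n$). Let $V$ be the set of all $2^n$ strings of length $n$. Then $\{N_\sigma\}_{\sigma\in V}$ covers all of $\Cant$, hence $E$. Each $\sigma \in V$ has $2^{-|\sigma|} = 2^{-n}$, so this is a $2^{-n}$-cover. Its weight is $2^n\cdot 2^{-sn} = \tilde{H}^s_n(\nu)$.

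\emph{Case 1} ($m \ge n$). Since there are only finitely many candidate sets, the minimum in the definition is attained by some finite $V$ satisfying (a) and (b). We can search for it effectively, using a finite search with strict/nonstrict comparisons of finitely many rational multiples of powers of $2^{-s}$. If needed we may skip exact minimization and use any $V$ whose weight is at most $\tilde{H}^s_n(\nu)$, which some candidate achieves by definition.

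By (a), every $\sigma\in V$ has $|\sigma|\ge n$, so $2^{-|\sigma|}\le 2^{-n}$, and $V$ gives a $2^{-n}$-family. It remains to show $E\subseteq \bigcup_{\sigma\in V}N_\sigma$, and this is the only substantive step. Let $X\in E$, and let $\rho = X\upharpoonright m$. Since $Z$ codes a tree $T_Z$ with $E=[T_Z]$, we have $\rho\in T_Z$, so $Z(\rho)=1$. Because $|\rho| = m$, the value $\nu(\rho)$ is defined: $\nu^{\le m}$ is defined and contains all strings of length $\le m$ under the chosen bijection. As $\nu\subseteq Z$, we get $\nu(\rho)=1$. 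Condition (b) then gives $\tau\in V$ with $\tau\subseteq\rho\subseteq X$, so $X\in N_\tau$.

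The main point to get right is this indexing argument: $|\nu|\ge 2^{m+1}-1$ guarantees that $\nu$ decides membership in $T_Z$ for every string of length $m$. Everything else is bookkeeping, and no compactness (hence no $\WKL_0$) is used.
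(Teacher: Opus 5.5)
Your proof is correct and follows essentially the paper's argument: take the minimizing $V$ from Case 1, use condition (a) for the $2^{-n}$ bound, and apply condition (b) to $X\upharpoonright_m$, where $\nu(X\upharpoonright_m)=Z(X\upharpoonright_m)=1$, to get the covering property. The only difference is that the paper handles Case 2 and general $\nu$ by invoking monotonicity of $\tilde{H}^s_n$ to pass to $Z^{\leq m}$ with $m \geq n$, whereas you treat Case 2 directly with the cover by all length-$n$ strings, which is the fact behind that monotonicity; your aside about effectively searching for the minimizer is unnecessary and not quite accurate, since equality of reals is not decidable, but only the existence of a finite $V$ matters here.
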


\begin{proof}
Given the monotonicity of the function $\tilde{H}^s_n$, it suffices to only consider $\nu \subseteq Z$ which have the form $Z^{\leq m}$ for some $m \geq n$. For such a string $\nu$, let $V \subseteq 2^{<\omega}$ be the set realizing the minimum $s$-weight, according to the definition of $\tilde{H}^s_n(\nu)$ (Definition \ref{def:H-tilde}), and consider the cover $\{N_{\sigma}\}_{\sigma \in V}.$ Since all strings in $V$ have length at least $n$ by condition (a), this generates a $2^{-n}$-cover. 

Additionally, for any $X \in E$, all initial segments $\sigma \subseteq X$ must satisfy $Z(\sigma) = 1$, and thus $\nu(\sigma) = 1$ as long as $\nu$ is defined there. If we consider the initial segment $\sigma = X \upharpoonright_m$, then condition (b) of the definition of $\tilde{H}^s_n$ guarantees that $\sigma$ has some initial segment $\tau \in V$, which implies that $X$ is an element of the corresponding cylinder $N_\tau$ in our cover. 

Thus, $\{N_{\sigma}\}_{\sigma \in V}$ is indeed a $2^{-n}$-cover for $E$, and we have

$$\sum_{\sigma \in V}2^{-s|\sigma|} = W_s(V) = \tilde{H}^s_n(\nu).$$
\end{proof}

If we reverse the roles of $H^s_n$ and $\mathcal{H}^s_{2^{-n}}$ in the above argument, we obtain a stronger statement, reverse mathematically. 

\begin{proposition} \label{prop:WKL-meas1} The following statement is $\omega$-equivalent to $\WKL_0$: Let $E \subseteq \Cant$ be a standard closed set, and let $Z \in [S]$ be a code for a tree corresponding to $E$. Let $s \geq 0$ be a real number, and let $n \in \N$. Then for any $2^{-n}$-cover $\{N_{\sigma}\}_{\sigma \in V}$ of $E$, there exists some initial segment $\nu \subseteq Z$ such that 

$$\tilde{H}^s_n(\nu) \leq \sum_{\sigma \in V}2^{-s|\sigma|}.$$

\end{proposition}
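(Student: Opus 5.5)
We prove the two directions separately; compactness is the heart of the forward direction, and a coding of a $\WKL$ instance gives the reversal.

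\emph{$\WKL_0 \Rightarrow$ statement.} Fix $E=[T_Z]$, $s$, $n$, and a $2^{-n}$-cover $\{N_\sigma\}_{\sigma\in V}$ of $E$, where $V$ may be infinite. If $\sum_{\sigma\in V}2^{-s|\sigma|}=\infty$ the claim is trivial, since any sufficiently long $\nu$ gives a finite value of $\tilde H^s_n$. So assume the sum is finite. The first step is to extract a finite subcover. Consider the tree
\[
T' = \{\tau \in T_Z : \text{no } \sigma\in V \text{ with } |\sigma|\le|\tau| \text{ and } \sigma\subseteq\tau\}.
\]
This tree is $\Delta^0_1$ in $Z\oplus V$, so it exists in $\RCA_0$. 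Any path through $T'$ would lie in $E$ and avoid every $N_\sigma$, contradicting the covering property. Hence $[T']=\emptyset$, and weak K\"onig's lemma (in its contrapositive form) makes $T'$ finite. Choose $m\ge n$ exceeding the lengths of all strings in $T'$, and also at least the lengths of the finitely many $\sigma \in V$ that we will use. Then every $\tau$ with $|\tau|=m$ and $Z(\tau)=1$ has an initial segment $\sigma\in V$ with $|\sigma|\le m$.

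Let $V_0=\{\sigma\in V: |\sigma|\le m\}$. This set is finite, satisfies conditions (a) and (b) of Definition~\ref{def:H-tilde} for $\nu=Z^{\le m}$ (lengths are at least $n$ because the cover is a $2^{-n}$-cover), and satisfies $W_s(V_0)\le \sum_{\sigma\in V}2^{-s|\sigma|}$. Therefore $\tilde H^s_n(Z^{\le m})\le W_s(V_0)\le\sum_{\sigma \in V} 2^{-s|\sigma|}$. The only delicate points are the formalization of the infinite sum, where it suffices to compare with finite partial sums, and the fact that $Z^{\le m}$ is defined exactly on strings of length $\le m$.

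\emph{Reversal.} Work in an $\omega$-model where the statement holds, and let $T$ be an infinite tree with no path. We need a contradiction, which shows the model satisfies $\WKL_0$. The idea is to use a small cover, namely the empty cover $V=\emptyset$, which is a $2^{-n}$-cover of $[T]=\emptyset$. Take $Z$ to code $T$, with $n=0$ and, say, $s=1$. The statement then yields $\nu\subseteq Z$ with $\tilde H^1_0(\nu)\le 0$. But if $\nu=Z^{\le m}$ with $m\ge 0$, condition (b) requires covering every $\sigma$ of length $m$ in $T$. Since $T$ is infinite it has such strings, so $V$ is nonempty and $W_1(V)>0$. Case~2 gives the value $1>0$. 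This is a contradiction, so every infinite tree has a path.

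The main subtlety I expect is precisely this reversal: checking that the statement's hypotheses genuinely permit $E$ empty and $V$ empty. If the paper's conventions forbid the empty cover, I would instead use $V=\{\langle\rangle\}$ with $n=0$, $s$ small, or an $s$-weighted cover that is cheaper than any finite-level cover of an infinite tree. I would also check the role of nontriviality, using $\tilde H$ being bounded below on infinite trees.
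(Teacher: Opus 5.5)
Your proof is correct. The reversal is the paper's argument. The forward direction reaches the same conclusion by a more economical route.

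\textbf{Reversal.} You use the empty cover of $[T]$ in a model where $T$ has no path, together with the positivity of every $\tilde H^s_n(\nu)$ for $\nu\subseteq Z$ when $T$ is infinite. The paper also treats $\emptyset$ as a legitimate $2^{-n}$-cover, so your fallback is unnecessary. As stated it would not work anyway: for $V=\{\langle\rangle\}$ and $n=0$ the weight is $1=\tilde H^s_0(\langle\rangle)$, so the statement holds with $\nu=\langle\rangle$ and gives no contradiction.

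\textbf{Forward direction.} The paper uses compactness twice.
\begin{enumerate}
\item It applies the closed-set Heine--Borel lemma to extract a finite subcover $\sigma_0,\dots,\sigma_N$.
\item It forms an auxiliary nested sequence of trees $T_k$, consisting of strings compatible with some level-$k$ node of $T_Z$ and extending no $\sigma_i$. The nested-closed-set form of compactness then gives a level $k$ with $[T_k]=\emptyset$. It checks condition (b) at $Z^{\le k}$ through the path $\sigma 0^\infty$ for each level-$k$ node $\sigma$.
\end{enumerate}
You merge both steps into one application of $\WKL_0$ to the single tree $T'$ of nodes of $T_Z$ not yet covered by $V$. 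Its finiteness gives a height $m$ beyond which every level-$m$ node of $T_Z$ has a prefix in $V$ of length at most $m$. So $V_0=\{\sigma\in V:|\sigma|\le m\}$ is at once the finite subcover and a witness for conditions (a) and (b) at $Z^{\le m}$.

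\textbf{Comparison.} Your version is shorter and self-contained. It avoids the auxiliary sequence $\langle T_k\rangle$ and the $\sigma0^\infty$ trick. The paper's version instead cites two standard textbook equivalents of $\WKL_0$ from Simpson, IV.2.

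Two parts of your write-up are unnecessary, since $V_0$ is defined directly by the length bound $m$: the preliminary case split on whether $\sum_{\sigma\in V}2^{-s|\sigma|}$ is infinite, and the clause requiring $m$ to exceed ``the lengths of the finitely many $\sigma\in V$ that we will use.''
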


\begin{proof}

We begin by showing that the desired statement holds in any $\omega$-model of $\WKL_0$.
Given our cover $\{N_\sigma\}_{\sigma \in V}$ of $E$, we may apply the ``closed set'' version of the Heine-Borel Covering Lemma (see~\cite{simpson2009subsystems}, IV.2) to choose a finite subcollection $N_{\sigma_0}, \dots , N_{\sigma_N}$ which still covers $E$. Define the open set $U = \bigcup_{i \leq N}N_{\sigma_i}$, which by assumption contains $E$ as a subset.

Now, define a sequence of trees $\langle T_k \rangle_{k \in \omega}$ by setting $\sigma \in T_k$ if and only if both of the following hold:
\begin{enumerate}
    \item $\sigma$ is compatible with some length-$k$ string $\tau$ for which $Z(\tau) = 1$ (i.e. $\tau$ is in the specified tree coding $E$)
    \item $\sigma$ does not extend any of the strings $\sigma_0, \sigma_1, \dots , \sigma_N$ represented in $U$
\end{enumerate}

It is straightforward to justify that each $T_k$ defined as such is closed under initial segments, so we do indeed get trees. Additionally, this sequence is computable from $Z$ - uniformly in $k$ - so exists in our model.

Consider the corresponding sequence $\langle C_k \rangle_{k \in \omega}$ of closed sets of $2^{\omega}$. We claim that the sequence is nested decreasing (i.e. $k < m \implies C_k \supseteq C_m$) and that the intersection $\bigcap_{k \in \omega} C_k$ is empty. The first statement follows from the fact that the trees $T_k$ themselves are nested decreasing: if a string $\sigma$ is compatible with a length-$k$ string $\tau$ in the tree for $E$, then $\sigma$ will also be compatible with all initial segments of $\tau$, each of which are also in this tree. (The second condition in the definition of $T_k$ does not depend on $k$.)

For the empty intersection, suppose to the contrary that there was some $X \in 2^{\omega}$ such that $X \in C_k$ for all $k \in \omega$. In particular, we would obtain that each initial segment $X \upharpoonright_k$ is in the corresponding tree $T_k$, which forces $Z(X\upharpoonright_k) = 1$; this implies that $X$ is in the closed set $E$. As $\{N_{\sigma_i}\}_{i \leq N}$ is a cover of $E$, there must exist some $\sigma_i$ for which $\sigma_i \subseteq X$. But then this string $\sigma_i$ is clearly not in any of the trees $T_k$, contradicting the assumption that $X$ was a path through each such tree.

At this point, we can used the nested closed set characterization of compactness, equivalent to $\WKL_0$ (see \cite{simpson2009subsystems}, IV.2), to conclude that some $C_k$ must be empty. In fact, all $C_k$ are empty for sufficiently large $k$, since the sets are nested. 

So choose $k \in \omega$ for which $C_k = \emptyset$ and for which $k \geq \max\{|\sigma_0|, \dots , |\sigma_N|\}$. Note that this implies $k \geq n$; since the original collection of cylinders $\{N_{\sigma}\}_{\sigma \in V}$ was a $2^{-n}$-cover, all $\sigma \in V$ (from which the strings $\sigma_i$ were all pulled from) must have length at least $n$.

We claim that $V^N = \{\sigma_0, \dots , \sigma_N\}$ will be considered in the definition for $\tilde{H}^s_n(Z^{\leq k})$, which must proceed according to Case 1 with $m = k$. Since all strings in $V^N$ have length between $n$ and $k$, condition (a) is satisfied. For condition (b), consider any length-$k$ string $\sigma$ for which $Z^{\leq k}(\sigma) = Z(\sigma) = 1$. Suppose we define $X \in 2^{\omega}$ by $X(i) = \sigma(i)$ if $i < |\sigma|$ and $X(i) = 0$ otherwise; such an element is computable so must exist in our model. If we examine the definition of the tree $T_k$, we see that every initial segment of $X$ will meet condition (1), using $\sigma$ as the witnessing length-$k$ string. However, since we assumed that $C_k$ was empty, we can conclude that some sufficiently long initial segment of $X$ must fail condition (2). In other words, $X$ must extend one of $\sigma_0, \dots , \sigma_N$. Since we assumed all of these strings had length at most $k$, it follows that $X \upharpoonright_k = \sigma$ extends the same string $\sigma_i \in V^N$, proving that $V^N$ meets condition (b).

Therefore, since $\tilde{H}^s_n(Z^{\leq k})$ computes the minimum $s$-weight over all valid sets $V$, we must have
\[
\tilde{H}^s_n(Z^{\leq k}) \leq W_s(V^N) = \sum_{i \leq N}2^{-s|\sigma_i|} \leq \sum_{\sigma \in V}2^{-s|\sigma|}.
\]
This completes one direction of the $\WKL_0$ equivalence. 

\medskip
For the opposite implication, assume we have an $\omega$-model for which the statement holds, and let $T \subseteq 2^{<\omega}$ be an infinite tree with associated code $Z \in 2^{\omega}$. Let $E \subseteq 2^{\omega}$ be the closed set associated to $T$. We wish to show that our model contains an element $X \in 2^{\omega}$ for which $X \in E = [T]$.

If this were not the case, then the empty set $\emptyset$ would be a valid $2^{-n}$-cover for the set $E$, whose associated ``weight'' (that is, the sum of all quantities $2^{-s|\sigma|}$ for $\sigma$ in the cover) would be zero. However, if $T$ is infinite, then for all initial segments $Z^{\leq k}$ for $k \geq n$, the quantity $\tilde{H}^s_{n}(Z^{\leq k})$ must be positive. This is certainly the case if $Z^{\leq k}$ falls under case (2) of the definition. If instead $Z^{\leq k}$ is sufficiently long to fall under case (1), then since there will always be at least one $\sigma \in T$ of length $k$, all covering sets $V$ under consideration will be nonempty. All these sets $V$ will then have positive values of $W_s(V)$, and therefore the minimum weight computed by $\tilde{H}^s_{n}(Z^{\leq k})$ is positive too. We can then conclude the same about all values of $\tilde{H}^s_n(\nu)$ by monotonicity.

This contradicts our statement when applied to the empty cover. Therefore, $E = [T]$ must be nonempty, proving that Weak König's Lemma is satisfied.
\end{proof}

From the above results, we can conclude that as long as we are working in a system at least as strong as $\WKL_0$, our two notions of Hausdorff $2^{-n}$-measure agree. 

\begin{corollary}\label{WKL-meas}
    The following holds in all $\omega$-models of $\WKL_0$: Let $E \subseteq \Cant$ be a standard closed set, and let $Z \in [S]$ be a code for a tree corresponding to $E$. Let $s \geq 0$ be a real number, and let $n \in \N$. Then if one of the quantities $H^s_n(Z)$ or $\mathcal{H}^s_{2^{-n}}(E)$ exists, the other quantity must also exist, and we will have 
    \[
    H_n^s(Z) =\mathcal{H}^s_{2^{-n}}(E).
    \]
\end{corollary}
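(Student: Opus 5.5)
The plan is to derive the corollary directly from Proposition~\ref{prop:RCA-meas} and Proposition~\ref{prop:WKL-meas1}. Since we are in an $\omega$-model of $\WKL_0$, both propositions apply. We interpret ``$\mathcal{H}^s_{2^{-n}}(E)$ exists'' to mean that the infimum over $2^{-n}$-covers in the model exists as a real. Likewise, ``$H^s_n(Z)$ exists'' means the limit of the monotone nonincreasing sequence $\tilde{H}^s_n(Z\upharpoonright_k)$ exists. The key observation is that the two sets of reals
\[
A = \{\tilde{H}^s_n(\nu) : \nu \subseteq Z\}, \qquad B = \Big\{\sum_{\sigma\in V}2^{-s|\sigma|} : \{N_\sigma\}_{\sigma\in V} \text{ a } 2^{-n}\text{-cover of } E\Big\}
\]
are \emph{mutually coinitial}. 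By Proposition~\ref{prop:RCA-meas}, every element of $A$ is bounded below by some element of $B$. By Proposition~\ref{prop:WKL-meas1}, every element of $B$ is bounded below by some element of $A$. Hence any real that is the greatest lower bound of one set is the greatest lower bound of the other.

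Next I would spell out both directions. Suppose $H^s_n(Z)=h$ exists. Because the sequence is monotone, $h = \inf A$. First, $h$ is a lower bound for $B$: given a cover with weight $w$, Proposition~\ref{prop:WKL-meas1} yields $\nu\subseteq Z$ with $\tilde{H}^s_n(\nu)\le w$, and monotonicity gives $h \le \tilde{H}^s_n(\nu)$. Second, $h$ is the greatest lower bound: for any $\epsilon>0$, choose $k$ with $\tilde{H}^s_n(Z\upharpoonright_k) < h+\epsilon$, and Proposition~\ref{prop:RCA-meas} supplies a cover in the model with weight below $h+\epsilon$. Thus $\mathcal{H}^s_{2^{-n}}(E)=h$. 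Conversely, suppose $\mathcal{H}^s_{2^{-n}}(E)=h$ exists. Then Proposition~\ref{prop:RCA-meas} shows $\tilde{H}^s_n(\nu)\ge h$ for all $\nu\subseteq Z$. For each $\epsilon$, Proposition~\ref{prop:WKL-meas1} gives some $\nu\subseteq Z$ with $\tilde{H}^s_n(\nu) < h+\epsilon$, and by monotonicity this bound holds for all longer initial segments. So the sequence converges to $h$.

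The main subtlety is formal rather than mathematical. We must make sure that ``exists'' for an infimum or limit is read as the existence of a real in the model with the appropriate defining property, and that the $\epsilon$-witnesses produced by the two propositions live in the model. Proposition~\ref{prop:WKL-meas1} produces an initial segment of $Z$, which is a finite object. Proposition~\ref{prop:RCA-meas} produces a finite cover $V$, which is computable and hence in the model. Since we only need the reals $h$ to satisfy first-order inf/limit characterizations, no additional comprehension is required beyond what the propositions already use.
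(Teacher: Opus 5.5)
Your proposal is correct and follows the paper's proof. Both directions come from combining Proposition~\ref{prop:RCA-meas} (each $\tilde{H}^s_n(\nu)$ bounds from above the weight of some $2^{-n}$-cover) with Proposition~\ref{prop:WKL-meas1} (each cover's weight bounds from above some $\tilde{H}^s_n(\nu)$), and your explicit argument for the direction starting from the existence of $\mathcal{H}^s_{2^{-n}}(E)$ fills in the case the paper dismisses as ``similar.''
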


\begin{proof}
Suppose we have an $\omega$-model of $\WKL_0$.
Assume first that $H^s_n(Z)$ exists, and denote it by $r \in \mathbb{R}$. Consider an arbitrary $2^{-n}$-cover $\{N_{\sigma}\}_{\sigma \in V}$ of $E$. If $\sum_{\sigma \in V}2^{-s|\sigma|} < r$, then by Proposition \ref{prop:WKL-meas1} we could find an initial segment $\nu \subseteq Z$ with 

$$\tilde{H}^s_n(\nu) \leq \sum_{\sigma \in V}2^{-s|\sigma|} < r,$$

contradicting $H^s_n(Z) = r$. So we conclude that $\sum_{\sigma \in V}2^{-s|\sigma|} \geq r$.

On the other hand, for $\epsilon > 0$, we can find some initial segment $\nu \subseteq Z$ with $\tilde{H}^s_n(\nu) < r + \epsilon$. We can apply Proposition \ref{prop:RCA-meas} to find a $2^{-n}$-cover $\{N_{\sigma}\}_{\sigma \in V}$ of $E$ with 

$$\sum_{\sigma \in V}2^{-s|\sigma|} \leq \tilde{H}^s_n(\nu) < r + \epsilon.$$

Combining these two deductions (and using the understanding of $\mathcal{H}^s_{2^{-n}}(E)$ discussed in the beginning of this section) yields that 

$$\mathcal{H}^s_{2^{-n}}(E) = r = H^s_n(Z).$$

The argument that the existence of $\mathcal{H}^s_{2^{-n}}(E)$ implies the existence and equality of $H^s_n(Z)$ is similar. 
\end{proof}

By this corollary, for $\omega$-models of $\WKL_0$ and stronger systems, we must also have
\[
\mathcal{H}^s(E) = \lim_{n \to \infty}\mathcal{H}^s_{2^{-n}}(E) = \lim_{n \to \infty} H^s_n(Z),
\]
assuming that all relevant quantities exist.

\medskip
If we are working in $\ACA_0$ or stronger, then for any tree code $Z \in \Cant$, the definition of $H^s_n(Z)$ can be fully formalized and within $ACA_0$ we can show that $H^s_n(Z)$ exists, since every bounded monotone sequence converges (see~\cite{simpson2009subsystems}, III.2). The same holds for $\lim_{n \to \infty} H^s_n(Z)$ if the value is finite, since the definition of $\tilde{H}^s_n$ is uniform in $n$. If the value is infinite, we can express this as an arithmetical formula. Therefore, in $\ACA_0$ we can work with $H^s(Z)$ as fully formalized.

\section{The Baire Category Theorem}

By using an appropriate topology on the space of closed sets (as represented by trees or pruned trees), one can view Besicovitch's construction of a subset of finite measure as a type of Baire Category argument\footnote{This was independently observed by Ted Slaman.}. In this section, we analyze the reverse mathematics strength of several Baire Category variants.

\subsection{Baire Category Theorem Formulations}
\label{ssec:bct}

The Baire Category Theorem of classical analysis already has a few equivalent forms, but one standard way it is often presented is as follows:

\begin{quote}
    \textit{``In any complete separable metric space, the intersection of countably many dense open sets is dense."}
\end{quote}

If we have a closed subset of a complete separable metric space, then the induced subspace is itself a complete separable metric space. Therefore, if we have a countable sequence of dense open sets in this subspace topology, the Baire Category Theorem applies. Using the subspace topology, we can formulate this corollary of the Baire Category theorem in the following way:

\begin{theorem}[Baire Category Theorem for Closed Sets, or BCTC]
    Let $X$ be a complete metric space, and let $F \subseteq X$ be a nonempty closed set. Suppose $\{U_n\}_{n \in \omega}$ is a sequence of nonempty open sets in $X$ such that each $U_n$ has the following property: if $N \subseteq X$ is a basic open set such that $N \cap F \neq \emptyset$, then there exists some $x \in N \cap F \cap U_n$. 
    Then $\bigcap_{n \in \omega}U_n$ also has this property: that is, if $N$ is a basic open set such that $N \cap F \neq \emptyset$, then there exists $x \in N \cap F \cap \left ( \bigcap_{n \in \omega} U_n\right )$.
\end{theorem}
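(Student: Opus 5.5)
The theorem is stated classically here, for a complete metric space $X$; the plan is to give the standard Baire argument inside the subspace $F$. Fix a basic open set $N$ with $N \cap F \neq \emptyset$. We build, by recursion on $n$, a nested sequence of basic open sets $N = B_0 \supseteq \overline{B_1} \supseteq B_1 \supseteq \overline{B_2} \supseteq \cdots$. Each $B_{n+1}$ is chosen so that three things hold. First, $B_{n+1} \cap F \neq \emptyset$. Second, $\overline{B_{n+1}} \subseteq B_n \cap U_n$. Third, $\operatorname{diam}(B_{n+1}) < 2^{-n}$.

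The inductive step uses the hypothesis on $U_n$. Since $B_n$ is basic and meets $F$, there is a point $x_n \in B_n \cap F \cap U_n$. The set $B_n \cap U_n$ is open and contains $x_n$, so we may take $B_{n+1}$ to be a small basic ball around $x_n$ whose closure lies inside $B_n \cap U_n$. Because $x_n \in F$, this ball meets $F$, which keeps the induction going.

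Now consider the centres (or the points $x_n$). They form a Cauchy sequence, since the diameters shrink, so by completeness of $X$ they converge to a limit $x$. Since each $x_n$ lies in $F$ and $F$ is closed, $x \in F$. For every $k$ and every $m \ge k$ we have $x_m \in B_{k+1}$, so $x \in \overline{B_{k+1}} \subseteq B_k \cap U_k$. Hence $x \in N \cap F \cap \bigcap_n U_n$, as required.

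The main obstacle is bookkeeping rather than ideas. The step that needs care is the choice of each $B_{n+1}$: it must both stay inside $U_n$ and still meet $F$. Centring the ball at the witness $x_n \in F$ is exactly what secures the second requirement. In Cantor space the argument simplifies: basic sets are cylinders $N_\sigma$, which are clopen, and we can take $B_{n+1} = N_{x_n\restriction_\ell}$ for $\ell > |\sigma_n|$ large enough that $N_{x_n\restriction_\ell} \subseteq U_n$. The closure condition is then automatic.

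Searching for the witnesses $x_n$ is where effectiveness fails: one cannot decide in general whether a given cylinder meets $F$. This foreshadows the $\ACA_0$ strength of the formalized version. Classically, however, the recursion uses only dependent choice, so the statement above follows as sketched.
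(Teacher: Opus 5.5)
Your proof is correct. It is the standard nested-ball proof of the Baire Category Theorem, run in the ambient space $X$ with two additions: the invariant that each ball meets $F$, and closedness of $F$ to keep the limit in $F$.

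The paper proves nothing new at this point. It notes that $F$ with the induced metric is complete, and that the hypothesis says exactly that each $U_n \cap F$ is a dense open subset of $F$. The classical Baire Category Theorem applied inside $F$ then shows that $\bigcap_n U_n \cap F$ meets every nonempty relatively open set $N \cap F$.

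Citing Baire in the subspace is shorter. Your unfolded version is more transparent. Its Cantor-space form, with cylinders $N_{x_n\restriction_\ell} \subseteq N_{\sigma_n} \cap U_n$, is the construction the paper later formalizes in $\RCA_0$ for $\BCTC$-III. There, pruning the tree guarantees that each chosen cylinder still meets $F$, and the next string is found by minimization rather than by searching for points. Your closing remark, that locating the witnesses $x_n$ is the non-effective step, is the same reason the paper gives for the standard-closed version $\BCTC$ reaching $\ACA_0$.

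One small point. If basic open sets are balls centred at points of a fixed countable dense set, you cannot literally centre $B_{n+1}$ at $x_n$. Instead take a basic ball of small radius that contains $x_n$ and whose closure lies in $B_n \cap U_n$; such balls exist because they form a base. Your argument only uses $x_n \in B_{n+1}$, so nothing else changes.
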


The reverse mathematics strength of the theorem above depends on how the open and closed sets are coded. As discussed in the introduction, the effect of varying the representation of the sequence of open sets has been investigated in several papers. Here, we focus on the representation of the closed set $F$.

We restrict our attention to Cantor space. When we speak of an open set in $\Cant$, we assume that it is coded in the manner outlined in Section~\ref{ssec:closed}: i.e. as a subset of $\Str$ which is closed upward under taking extensions. We also introduce some terminology to make our theorems more concise.

\begin{definition} Let $F \subseteq \Cant$ be a nonempty closed set, and let $A \subseteq \Cant$ be any set. We say that $A$ is \textit{dense in} $F$ if for every string $\sigma \in \Str$ such that $N_{\sigma} \cap F \neq \emptyset$, there exists some $X \in N_{\sigma} \cap F \cap A$.
\end{definition}

We now formulate three different versions of BCTC, depending on how the closed set is coded.

\begin{itemize}[leftmargin=6em, labelsep=2ex, itemsep=1em ]
\item[($\BCTC$-I)]
    Let $F \subseteq \Cant$ be a standard closed set. Suppose $\{U_n\}_{n \in \N}$ is a sequence of nonempty open sets in $\Cant$ such that each $U_n$ is dense in $F$. Then $\bigcap_{n \in \N}U_n$ is dense in $F$. 

\item[($\BCTC$-II)]
    Let $F \subseteq \Cant$ be a separably closed set. Suppose $\{U_n\}_{n \in \N}$ is a sequence of nonempty open sets in $\Cant$ such that each $U_n$ is dense in $F$. Then $\bigcap_{n \in \N}U_n$ is dense in $F$. 

\item[($\BCTC$-III)]
    Let $F \subseteq \Cant$ be a pruned closed set. Suppose $\{U_n\}_{n \in \N}$ is a sequence of nonempty open sets in $\Cant$ such that each $U_n$ is dense in $F$. Then $\bigcap_{n \in \N}U_n$ is dense in $F$. 
\end{itemize}

We first consider the second and third formulations, since these are the versions which require the least axiomatic power to prove.

\begin{proposition} \label{BCTC-II/III}
    $\BCTC\text{-}\mathrm{II}$ and $\text{-}\mathrm{III}$ are provable in $\RCA_0$.
\end{proposition}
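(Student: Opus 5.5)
Since $\BCTC$-III is the special case of $\BCTC$-I for pruned trees, the plan is to prove $\BCTC$-III directly in $\RCA_0$ by the usual $\RCA_0$ proof of Baire Category, and to reduce $\BCTC$-II to it. The reason the usual proof works here is that for a pruned tree $T$ with $F=[T]$, the condition ``$N_\sigma\cap F\neq\emptyset$'' is equivalent to ``$\sigma\in T$''. One direction is immediate. For the other, if $\sigma\in T$ then the leftmost extension of $\sigma$ through $T$ is computable from $T$ by primitive recursion, using prunedness at every step, as in the proof of Proposition~\ref{RCA-P}. So membership in the relevant basic sets becomes decidable from the code.

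For $\BCTC$-III, let $T$ be pruned and $\sigma\in T$. With $V_n$ the upward-closed codes for $U_n$, I build a sequence $\sigma=\tau_0\subseteq\tau_1\subseteq\cdots$ of strings in $T$ by primitive recursion. Given $\tau_n\in T$, I have $N_{\tau_n}\cap F\neq\emptyset$, so density of $U_n$ gives some $X\in N_{\tau_n}\cap F\cap U_n$. Hence some initial segment $\rho\supseteq\tau_n$ of $X$ lies in $T\cap V_n$ (using that $V_n$ is upward closed). I let $\tau_{n+1}$ be the least such $\rho$ with $\rho\supsetneq\tau_n$, $\rho\in T$ and $\rho\in V_n$. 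This search is computable in $T\oplus\langle V_n\rangle$ and terminates because a witness exists, so the sequence exists by $\Delta^0_1$ comprehension. Set $Y=\bigcup_n\tau_n$. This is an element of $\Cant$, since the lengths strictly increase. Every initial segment of $Y$ lies in $T$, so $Y\in[T]=F$. Also $Y\supseteq\tau_{n+1}\in V_n$, so $Y\in U_n$ for each $n$, and $Y\supseteq\sigma$. The only subtle point is that the search is justified by the density hypothesis, which is a $\Sigma^1_1$-looking statement. However, the witness $\rho$ is a finite string, so the existence of $\rho$ is an arithmetical ($\Sigma^0_1$) fact. This gives totality of the recursion in $\RCA_0$.

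For $\BCTC$-II, let $F=\bar S$ with $S=\langle X_k\rangle$; the empty case is trivial. Again the key is that $N_\sigma\cap F\neq\emptyset$ iff $\sigma\subseteq X_k$ for some $k$, and this is $\Sigma^0_1$ in $S$. The forward direction uses the definition of $\bar S$ with $k=|\sigma|$. The backward direction holds because each $X_k\in\bar S$. I therefore run the same recursion, but track pairs $(\tau_n,k_n)$ with $\tau_n\subseteq X_{k_n}$. Given such a pair, $N_{\tau_n}\cap F\neq\emptyset$ is witnessed, so density yields a string $\rho\supsetneq\tau_n$ with $\rho\in V_n$ and $\rho\subseteq X_k$ for some $k$. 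I search for the least such pair $(\rho,k)$.

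The limit $Y$ then satisfies: for each $j$ there is $n$ with $Y\restriction_j\subseteq\tau_n\subseteq X_{k_n}$, so $Y\in\bar S=F$. It also lies in every $U_n$ and extends $\sigma$. An alternative would be to observe that the set of initial segments of the $X_k$ is a $\Sigma^0_1$ pruned tree. I prefer the direct recursion, since it avoids needing that tree to exist as a set, which would need arithmetic comprehension by Theorem~\ref{ACA-T}.

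The main obstacle is purely the bookkeeping of justifying, inside $\RCA_0$, that the unbounded search always halts. In both cases this reduces to noting that density gives an $X$ in the model, and that the relevant property of a finite initial segment of $X$ is decidable relative to the codes.
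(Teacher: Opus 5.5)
Your proposal is correct and is essentially the paper's proof. For $\BCTC$-III the paper likewise uses prunedness to build the leftmost path through $T$ above any $\sigma\in T$. It handles the totality bookkeeping by minimizing over the $\Delta^0_1$ condition $\sigma\in T\rightarrow(\tau\supsetneq\sigma\wedge\tau\in V_n\cap T)$, which holds vacuously off $T$, so the search function is total and can be iterated by primitive recursion. For $\BCTC$-II the paper tracks pairs $(k_n,\sigma_n)$ with $\sigma_n\subseteq Z_{k_n}$ exactly as you do. The only slip is your opening remark about ``reducing'' $\BCTC$-II to $\BCTC$-III. You do not actually do this, and rightly so, for the reason you give.
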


\begin{proof}
We first prove $\BCTC$-III, as it resembles the ``classical'' Baire Category Theorem proof most closely.
Since $F$ is pruned closed, let $T \subseteq \Str$ denote a pruned tree which represents $F$. Additionally, let $\langle V_n\rangle_{n \in \mathbb{N}}$ be a sequence of open codes corresponding to the sequence of open sets $\langle U_n\rangle_{n \in \mathbb{N}}$.

Suppose $\sigma^* \in \Str$ is such that $N_{\sigma^*} \cap F \neq \emptyset$; note that this implies  $\sigma^* \in T$.
We wish to construct an element $X \in N_{\sigma^*} \cap F \cap \left ( \bigcap_{n \in \mathbb{N}}U_n\right )$. We do this by defining a sequence of strings $\langle \sigma_n \rangle_{n \in \mathbb{N}}$ with the following properties:
\begin{enumerate}
    \item $\sigma^* \subsetneq \sigma_0$, and for all $n \in \mathbb{N}$ we have $\sigma_n \subsetneq \sigma_{n+1}$.
    \item For all $n \in \mathbb{N}$, we have $\sigma_n \in V_n$.
    \item For all $n \in \mathbb{N}$, we have $\sigma_n \in T$.
\end{enumerate}

If we have such a sequence, there exists a unique $X \in 2^{\mathbb{N}}$ ($\Delta^0_1$-definable in the sequence) which has all elements of the sequence as initial segments. For such an $X$, it is clear from properties (1) and (2) that $X \in N_{\sigma^*} \cap \left ( \bigcap_{n \in \mathbb{N}}U_n \right ).$ The fact that $X \in F$ follows from $X$ having arbitrarily long initial segments in $T$ by property (3).

We will define our sequence using a combination of primitive recursion and minimization in $\RCA_0$. Consider the set $A \subseteq \Str \times \mathbb{N} \times \Str$ defined by 
$$A = \{ (\sigma, n, \tau): \sigma \in T \rightarrow \tau \supsetneq \sigma \wedge \tau \in V_n \cap T\}.$$
Note that this set $A$ is $\Delta^0_1$-definable from the parameters $T$ and $\langle V_n \rangle_{n \in \mathbb{N}}$.

We claim that for every pair $(\sigma, n) \in \Str \times \mathbb{N}$, there exists some $\tau \in \Str$ for which $(\sigma, n, \tau) \in A$. If $\sigma \notin T$, then any element $\tau \in \Str$ will work as a witness. Otherwise, one can construct a sequence $X \in [T] = F$ by primitive recursion which extends $\sigma$, i.e. $N_\sigma \cap F$ is nonempty. Since the set $U_n$ is dense in $F$, there must exist some $Y \in N_\sigma \cap F \cap U_n$. We know all initial segments of $Y$ will be in $T$, and all sufficiently long initial segments of $Y$ will be in the open code $V_n$. Thus, we may choose such an initial segment $\tau$ which properly extends $\sigma$, and we obtain $(\sigma, n, \tau) \in A$.

We can now use minimization to define a function $g: \Str \times \mathbb{N} \to \Str$ by setting $g(\sigma, n)$ to be the least $\tau$ for which $(\sigma, n, \tau) \in A$. We use primitive recursion to define our desired sequence $\langle \sigma_n \rangle_{n \in \mathbb{N}}$ by 

$$\sigma_0 = g(\sigma^*, 0),$$
$$\sigma_{n+1} = g(\sigma_{n}, n+1),$$

for all $n \in \mathbb{N}$. By the definition of $g$, we will have $(\sigma^*, 0, \sigma_0) \in A$, as well as $(\sigma_n, n+1, \sigma_{n+1}) \in A$ for all $n \in \mathbb{N}$. Since $\sigma^* \in T$, it follows from a straightforward $\Sigma^0_1$-induction that $\sigma_n \in T$ for all $n \in \mathbb{N}$. The definition of $A$ then implies that each $\sigma_n$ satisfies $\sigma_n \supsetneq \sigma_{n-1}$ (or $\sigma_0 \supsetneq \sigma^*$, in the case of $n = 0$) and $\sigma_n \in V_n$. Thus, all of our specified properties are satisfied, so our desired element $X \in N_{\sigma^*} \cap F \cap (\bigcap_{n \in \mathbb{N}}U_n)$ exists.

\medskip
For $\BCTC$-II, we use the same notation for representing the open sets, but now we have a countable sequence $S = \langle Z_k \rangle_{k \in \mathbb{N}}$ representing the separably closed set $F$. However, the argument proceeds in a similar way. 

Once again, we let $\sigma^* \in \Str$ be such that $N_{\sigma^*} \cap F \neq \emptyset$. Observe that there must be some $k^* \in \mathbb{N}$ for which $\sigma^* \subseteq Z_{k^*}$, since any $Y \in N_{\sigma^*} \cap F$ must agree with some element of $S = \langle Z_k \rangle_{k \in \mathbb{N}}$ up through the length of $\sigma^*$.

We will again construct an element $X \in N_{\sigma^*} \cap F \cap \left ( \bigcap_{n \in \mathbb{N}}U_n\right )$ by defining a strictly increasing sequence of strings $\langle \sigma_n \rangle_{n \in \mathbb{N}}$. The desired properties below are almost the same as in the last proof, except for property (3):
\begin{enumerate}
    \item $\sigma^* \subsetneq \sigma_0$, and for all $n \in \mathbb{N}$ we have $\sigma_n \subsetneq \sigma_{n+1}$.
    \item For all $n \in \mathbb{N}$, we have $\sigma_n \in V_n$.
    \item For all $n \in \mathbb{N}$, there exists some $k \in \mathbb{N}$ such that $\sigma_n \subseteq Z_k$.
\end{enumerate}

If we define $X \in 2^{\mathbb{N}}$ to be the unique limiting element of this sequence, then it is again clear from properties (1) and (2) that $X \in N_{\sigma^*} \cap \left ( \bigcap_{n \in \mathbb{N}}U_n \right ).$ By property (3), (and the fact that each $\sigma_n$ satisfies $|\sigma_n| \geq n$), we can find arbitrarily long initial segments of $X$ which agree with some element of the sequence $S = \langle Z_k \rangle_{k \in \mathbb{N}}$, implying that $X \in \bar{S} = F.$

We again define our sequence using a combination of primitive recursion and minimization in $\RCA_0$, although we streamline a couple of steps. First, we claim that for every triple $(k, \sigma, n) \in \mathbb{N} \times \Str \times \mathbb{N}$, there exists some $(m, \tau) \in \mathbb{N} \times \Str$ for which the following condition holds:
\[
\sigma \subseteq Z_k \rightarrow \tau \supsetneq \sigma \wedge \tau \in V_n \wedge \tau \subseteq Z_m.
\]
If $\sigma \nsubseteq Z_k$, then any pair $(m, \tau)$ will work as a witness. Otherwise, we know $N_\sigma \cap F$ is nonempty, as witnessed by the element $Z_k$. Since the set $U_n$ is dense in $F$, there must exist some $Y \in N_\sigma \cap F \cap U_n$. We can then choose an initial segment $\tau \subseteq Y$ sufficiently long that $\tau \supsetneq \sigma$ and $\tau \in V_n$, and by the definition of $\bar{S}$, there must exist some $Z_m$ in the sequence $S$ such that $Z_m \supseteq \tau$. This pair $(m, \tau)$ will serve as our desired witness.

Since the condition we described is $\Delta^0_1$ from the parameters $\langle V_n \rangle_{n \in \mathbb{N}}$ and $S = \langle Z_k \rangle_{k \in \mathbb{N}}$, we can now use minimization to define a function $g: \mathbb{N} \times \Str \times \mathbb{N} \to \mathbb{N} \times \Str$ to be the least pair $(m, \tau)$ for which our condition holds. We then use primitive recursion to define a sequence $\langle (k_n, \sigma_n) \rangle_{n \in \mathbb{N}}$ by
\begin{align*}
(k_0, \sigma_0) & = g(k^*, \sigma^*, 0), \\
(k_{n+1}, \sigma_{n+1}) & = g(k_n, \sigma_n, n+1), 
\end{align*}
for all $n \in \mathbb{N}$. By the condition used to define $g$, we can determine after a simple $\Sigma^0_1$-induction that each pair $(k_n, \sigma_n)$ satisfies $\sigma_n \subseteq Z_{k_n}$. We then obtain that each $\sigma_n$ satisfies $\sigma_n \in V_n$ and $\sigma_{n} \supsetneq \sigma_{n-1}$ (or $\sigma_0 \supsetneq \sigma^*$, in the case of $n = 0$). Thus, if we consider the sequence $\langle \sigma_n \rangle_{n \in \mathbb{N}}$ - which can be obtained in a $\Delta^0_1$ way from the sequence $\langle (k_n, \sigma_n) \rangle_{n \in \mathbb{N}}$ just by applying the second projection map to each entry - we see that all of our specified properties are satisfied. Thus, our limiting element $X \in N_{\sigma^*} \cap F \cap (\bigcap_{n \in \mathbb{N}}U_n)$ exists, as desired.
\end{proof}

We finally turn to the complexity of $\BCTC$-I, which we will henceforth refer to as just $\BCTC$. One might start by attempting to carry out a proof in $\RCA_0$ by following the structure of the argument for $\BCTC$-III. However, the fact that the tree $T$ representing $F$ is not necessarily pruned causes issues with the inductive construction. 

Of course, if we have access to $\ACA_0$ we can just replace the tree $T$ with a pruned subtree representing the same closed set $F$, and the $\BCTC$-III proof goes through. But considering that other versions of the Baire Category theorem are provable in $\RCA_0$, one might conjecture that BCTC is still relatively ``simple" from a reverse math perspective, even if its power does reach beyond $\RCA_0$. However, it turns out that $\ACA_0$ is in fact necessary, as we will now show. 

\begin{definition} 
 Let $(.,.): \mathbb{N} \times \mathbb{N} \to \mathbb{N}$ denote a standard numerical pairing function. For $X \in 2^{\mathbb{N}}$ and $n \in \mathbb{N}$, we define the \textit{n-th column} $(X)_n \in 2^{\mathbb{N}}$ by

 $$(X)_n(m) = X((n,m))$$

 for all $m \in \mathbb{N}$. Similarly, if $\sigma \in \Str$ and $n \in \mathbb{N}$, we define the $n$-th column $(\sigma)_n$ to be the unique string $\tau$ such that $|\tau|$ is the least $m \in \mathbb{N}$ for which $(n,m) \geq |\sigma|$, and such that

 $$\tau(m) = \sigma((n,m))$$

 for all $m < |\tau|.$
\end{definition}

The following lemma is straightforward to verify.

\begin{lemma}
    The following are provable in $\RCA_0$ for the the column operators defined above:
    \begin{enumerate}
        \item [(i)]For any $X \in 2^{\mathbb{N}}$, the sequence of $n$-th columns $\langle (X)_n\rangle_{n \in \mathbb{N}}$ always exists. As a consequence, each individual column $(X)_n$ exists for a fixed $n \in \mathbb{N}$.
        \item [(ii)]For any $\sigma \in \Str$ and $n \in \mathbb{N}$, the $n$-th column $(\sigma)_n \in \Str$ is well-defined. Additionally, there exists a function $f: \mathbb{N} \times \Str \to \Str$ such that $f(n, \sigma) = (\sigma)_n$. 
        \item [(iii)] For any $n \in \mathbb{N}$, $\sigma, \tau \in 2^{\mathbb{N}}$, and $X \in 2^{\mathbb{N}}$, we have
        $$\sigma \subseteq \tau \implies (\sigma)_n \subseteq (\tau)_n,$$
        $$\sigma \subseteq X \implies (\sigma)_n \subseteq (X)_n.$$
    \end{enumerate}
\end{lemma}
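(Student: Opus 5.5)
The plan is to check each clause directly from the definitions. The only facts about the pairing function I will use are two properties of the standard (Cantor) pairing $(n,m)=\tfrac{(n+m)(n+m+1)}{2}+m$, both provable in $\RCA_0$ by elementary arithmetic: $(n,m)\geq m$ for all $n,m$, and $m\mapsto (n,m)$ is strictly increasing for each fixed $n$. Every set or function below is given by a $\Delta^0_1$ (in fact bounded-quantifier) definition from the given parameters, so $\Delta^0_1$-comprehension in $\RCA_0$ produces it.

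For (i), a sequence $\langle Y_n\rangle_{n\in\N}$ of elements of $\Cant$ is coded by a single set $Y$ with $Y_n(m)=Y((n,m))$. The sequence of columns $\langle (X)_n\rangle_{n\in\N}$ is therefore coded by $X$ itself, or, if one insists on a separate pairing for sequence codes, by the set $\{(n,m)\text{-code} : X((n,m))=1\}$, which exists by $\Delta^0_1$-comprehension relative to $X$. Each individual column $(X)_n=\{m : X((n,m))=1\}$ then exists for fixed $n$, again by $\Delta^0_1$-comprehension.

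For (ii), fix $\sigma$ and $n$. By $(n,m)\geq m$, taking $m=|\sigma|$ gives $(n,|\sigma|)\geq |\sigma|$, so the set of $m$ with $(n,m)\geq|\sigma|$ is nonempty. The least-number principle for bounded ($\Sigma^0_0$) formulas then gives a least such $m\leq|\sigma|$; call it $\ell$. For every $m<\ell$, minimality gives $(n,m)<|\sigma|$, so $\sigma((n,m))$ is defined. Hence the string $\tau$ of length $\ell$ with $\tau(m)=\sigma((n,m))$ exists, and it is unique. The graph of $f(n,\sigma)=(\sigma)_n$ is defined by a bounded formula (search $\ell\le|\sigma|$, compare entries below $\ell$), and $f$ is total, so $f$ exists in $\RCA_0$.

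For (iii), where $\sigma,\tau$ should be read as finite strings, suppose $\sigma\subseteq\tau$. Let $\ell_\sigma$ and $\ell_\tau$ be the lengths from (ii). If $(n,m)\geq|\tau|$, then $(n,m)\geq|\sigma|$, so $\ell_\sigma\leq\ell_\tau$. For $m<\ell_\sigma$ we have $(n,m)<|\sigma|$, and therefore $(\tau)_n(m)=\tau((n,m))=\sigma((n,m))=(\sigma)_n(m)$. Similarly, if $\sigma\subseteq X$, then for $m<\ell_\sigma$ we get $(\sigma)_n(m)=\sigma((n,m))=X((n,m))=(X)_n(m)$.

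I do not expect a real obstacle. The only point needing care is that the pairing properties and the minimization are available with only $\Sigma^0_0$ induction and bounded search, and the Cantor pairing satisfies this.
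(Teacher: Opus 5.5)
Your proposal is correct. The paper gives no proof and simply calls the lemma straightforward, so your direct check from the definitions is the routine verification intended: $(n,m)\geq m$ bounds the column length, minimality of that length guarantees $\sigma((n,m))$ is defined, and bounded comprehension gives the graph of $f$ (you rightly read $\sigma,\tau$ in (iii) as finite strings, and the strict monotonicity of $m\mapsto(n,m)$ you list is never actually needed).
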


\begin{theorem} \label{thm:BCTC}
    Over $\RCA_0$, $\BCTC$ is equivalent to $\ACA_0.$
\end{theorem}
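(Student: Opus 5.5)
The forward direction is immediate from earlier results. Working in $\ACA_0$, given a tree $T$ with $F=[T]$, Theorem~\ref{ACA-T}(i) produces a pruned tree $\tilde T$ with $[\tilde T]=[T]$. The hypotheses of $\BCTC$ refer only to the set $F$ and the open codes, not to the particular tree, so they transfer verbatim to $\tilde T$. Applying $\BCTC$-III, which is provable in $\RCA_0$ by Proposition~\ref{BCTC-II/III}, then yields density of $\bigcap_n U_n$ in $F$.

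For the reversal we work in $\RCA_0+\BCTC$ and show that the range of an arbitrary one-to-one function $f:\N\to\N$ exists, which suffices for $\ACA_0$. The plan is to build, computably from $f$, a tree $T$ and dense open sets $U_n$ such that any element $X\in F\cap\bigcap_n U_n$ encodes the range of $f$ in its columns.

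\emph{Construction.} Paths $X\in[T]$ are intended to have column $(X)_n$ equal to either $0^\omega$ or $0^{s}1^\omega$ with $f(s)=n$, the latter marking $n$ as being in the range with witness $s$. So $T$ consists of those $\sigma$ such that each column $(\sigma)_n$ has the form $0^k$ or $0^s1^j$ with $j\ge1$ and $f(s)=n$. This condition is decidable from $f$ and closed under initial segments, so $T$ exists in $\RCA_0$. The tree is not pruned: a column may begin $0^k$ with $k$ beyond the unique witness, and it then cannot be switched to $1$ later, but it can still be extended by zeros, so all nodes remain extendible. Non-extendibility must instead come from the $U_n$. Let $U_n$ be the open set of $X$ such that either $(X)_n$ contains a $1$, or the initial segment of $(X)_n$ of some length $k$ is $0^k$ and $f(s)\ne n$ for all $s<k$. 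This is $\Delta^0_1$ in $f$, so a code $V_n$ exists.

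\emph{Density.} Given $\sigma$ with $N_\sigma\cap F\neq\emptyset$, we need some $Y\in N_\sigma\cap F\cap U_n$. If $n=f(s)$ and $(\sigma)_n$ is all zeros of length at most $s$, extend column $n$ by $0^{s-|(\sigma)_n|}1^\omega$ and set every other column constant. If $n=f(s)$ with $(\sigma)_n=0^k$ and $k>s$, then the path with column $n$ equal to $0^\omega$ lies in $F$ but not in $U_n$. This case breaks density, so the construction must be repaired.

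\emph{Repair.} The fix is to make "late" zero-columns non-extendible in $T$ itself: require that $(\sigma)_n=0^k$ only when $f(s)\ne n$ for all $s<k$. Then a column with $f(s)=n$ that reaches length $s+1$ must contain a $1$. Now $T$ is still computable from $f$, and we may take $U_n=\Cant$, or better, the set of all $X$ whose column $n$ has length at least $1$, trivially. The point is that $[T]$ consists exactly of the sequences whose columns encode the true range membership. This in turn suggests the cleaner use of $\BCTC$: apply it with $U_n=\Cant$ and $\sigma^*=\langle\rangle$. The hypothesis $N_{\langle\rangle}\cap F\neq\emptyset$, however, requires exhibiting a path, and that path is exactly the range of $f$. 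So $\BCTC$ cannot be applied that naively.

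\emph{The main obstacle,} then, is arranging that the hypotheses of $\BCTC$ are checkable in $\RCA_0$ while its conclusion still encodes the jump. I would therefore keep the first, non-pruned tree $T$, under which the all-$0^\omega$ path and paths with arbitrary correct switches exist in $\RCA_0$. I would then take $U_n$ to be the set of $X$ such that either $(X)_n$ has a $1$, or $(X)_n$ begins $0^k$ with no $s<k$ having $f(s)=n$, or some column $m\ne n$ is "wrong", that is, some $(X)_m$ begins $0^k$ with $f(s)=m$ for some $s<k$. This last disjunct restores density in the problematic case, since $\sigma$ already contains a wrong column. The set $U_n$ remains open and $\Delta^0_1$.

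\emph{Extracting the range.} Apply $\BCTC$ with $\sigma^*=\langle\rangle$ to get $X\in F\cap\bigcap_n U_n$. If $X$ had a wrong column $m$, then membership of $X$ in $U_m$ would have to come from that same disjunct, so "wrong" must be indexed carefully. I would instead add a disjunct of the form "a wrong column $m<n$", so that membership in $U_0$ forces column $0$ to be correct, and induction on $n$ (a $\Pi^0_1$ induction on $X$, available in $\RCA_0$) shows every column is correct. Then $n\in\operatorname{ran} f$ if and only if $(X)_n$ contains a $1$, if and only if $X((n,s))=1$ where $s$ is the least index with $(X)_n(s)=1$, which is decidable. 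I expect getting this indexing exactly right, so that density holds while every column is forced to be correct, to be the delicate step.
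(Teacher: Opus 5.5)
Your forward direction is exactly the paper's: prune the tree via Theorem~\ref{ACA-T}(i), then apply $\BCTC$-III. The reversal, however, has a genuine gap, and it is not a matter of indexing. Your encoding points the wrong way.

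Consider the path you are trying to force, with $(X^*)_n=0^s1^\omega$ when $f(s)=n$ and $(X^*)_n=0^\omega$ otherwise. It is computable from $f$, since $(X^*)_n(i)=1$ iff $f(s)=n$ for some $s\le i$. So, contrary to your remark in the repair paragraph, it already exists in $\RCA_0$. Moreover, ``$(X^*)_n$ contains a $1$'' is just another $\Sigma^0_1$ description of $n\in\mathrm{ran}(f)$. Your final ``least $s$ with $(X)_n(s)=1$'' exists only when $n\in\mathrm{ran}(f)$, so no decision procedure results even if every column were correct.

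Independently, your $U_n$ constrain nothing. The disjunct ``$(X)_n$ begins $0^k$ and $f(s)\neq n$ for all $s<k$'' already holds with $k=0$, and for a wrong column with witness $s$ it holds with $k=s$. Since $U_n$ is open, such a certificate is never withdrawn when $f(s)=n$ later appears. Hence the all-zero sequence lies in $F\cap\bigcap_n U_n$, and your induction has no base.

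No redesign of the $U_n$ can rescue this. Inside any $N_{\sigma^*}\cap F\neq\emptyset$ you can zero out a column $n\ge|\sigma^*|$ with $n\in\mathrm{ran}(f)$. So the paths with a wrong column form a nonempty relatively open subset of $N_{\sigma^*}\cap F$. The dense set that $\BCTC$ returns must meet it.

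The missing idea is to let the closed set enforce the $\Pi^0_1$ fact and let the open sets demand positive evidence for it. The paper takes:
\begin{itemize}
\item $\sigma\in T$ iff for every $n<|\sigma|$, either $(\sigma)_n$ is all zeros or no $k<|\sigma|$ has $f(k)=n$;
\item $\sigma\in V_n$ iff $(\sigma)_n$ contains a $1$ or some $k<|\sigma|$ has $f(k)=n$.
\end{itemize}
Thus a $1$ in column $n$ survives in $F$ only if $n$ never enters the range. Density is easy:
\begin{itemize}
\item if $f(k)=n$ for some $k$, every path in $F$ lies in $U_n$;
\item otherwise, given $X\in N_\sigma\cap F$, flipping the single bit at position $(n,|(\sigma)_n|)\ge|\sigma|$ to $1$ stays inside $N_\sigma\cap F\cap U_n$.
\end{itemize}
The all-zero sequence witnesses $N_{\langle\rangle}\cap F\neq\emptyset$ in $\RCA_0$. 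Any $Z\in F\cap\bigcap_n U_n$ then satisfies $n\notin\mathrm{ran}(f)\iff\exists i\,(Z)_n(i)=1$. This is a $\Sigma^0_1$-in-$Z$ definition of the complement, which your encoding cannot supply. Together with the $\Sigma^0_1$-in-$f$ definition of the range, it makes $\mathrm{ran}(f)$ $\Delta^0_1$, so the range exists.
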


\begin{proof}
Assume first that $\ACA_0$ holds. Then by Theorem \ref{ACA-T}, the standard closed set $F$ in the statement of $\BCTC$ is also pruned closed. We can then apply $\BCTC$-III, which does not require any axioms beyond $\RCA_0$.
    
\medskip
For the opposite implication, we use $\BCTC$ to prove that every one-to-one function on the natural numbers has a range. So let $f: \mathbb{N} \to \mathbb{N}$ be such a function. We split the proof into three steps.

\medskip
    \textit{Step 1:} Use $f$ to define a standard closed set $F$ and a sequence of open sets $\langle U_n \rangle_{n \in \mathbb{N}}$.

\medskip
    We define a tree $T \subseteq \Str$ by setting $\sigma \in T$ if and only if for all $n < |\sigma|,$ at least one of the following holds:
    \begin{enumerate}
        \item For all $i < |(\sigma)_n|,$ we have $(\sigma)_n(i) = 0$.
        \item There exists no $k < |\sigma|$ such that $f(k) = n$.
    \end{enumerate}
    Observe that $T$ is $\Delta^0_1$-definable from $f$, so its existence is provable in $\RCA_0.$
    Additionally, we see that $T$ is closed under initial segments, so we do indeed get a tree. For let $\sigma \in T$, let $\tau \subseteq \sigma$, and let $n < |\tau|$. As $n < |\sigma|$, we know that either condition (1) or (2) will hold with respect to $n$ and $\sigma$. In either case, it is easy to see that the same condition must hold with respect to $n$ and $\tau$, since $|\tau| \leq |\sigma|$ and $(\tau)_n \subseteq (\sigma)_n$ (see the previous lemma). As this $n$ was arbitrary, we obtain that $\tau \in T$ as desired.
    
    We now define a sequence of open codes $\langle V_n \rangle_{n \in \mathbb{N}}$ by setting $\sigma \in V_n$ if at least one of the conditions below holds:
    \begin{enumerate}
        \item There exists $i < |(\sigma)_n|$ such that $(\sigma)_n(i) = 1$.
        \item There exists $k < |\sigma|$ such that $f(k) = n$.
    \end{enumerate}
Again, this sequence is $\Delta^0_1$-definable in $f$ so exists under $\RCA_0$. It is also straightforward to check that each $V_n$ is closed upwards under taking extensions, so gives a valid open code; the reasoning is essentially the dual from that used in justifying that the set $T$ above was a tree.

Now, let $F = [T]$ denote the standard closed set corresponding to $T$, and let $\langle U_n \rangle_{n \in \mathbb{N}}$ denote the sequence of open sets corresponding to the open codes in $\langle V_n \rangle_{n \in \mathbb{N}}$. 

\medskip

\textit{Step 2:} Prove that each open set $U_n$ is dense in the closed set $F$.

\medskip

Suppose $\sigma \in \Str$ is such that $N_{\sigma} \cap F$ is nonempty, and let $X \in N_{\sigma} \cap F$. For a given $n \in \mathbb{N}$, we will use this element $X$ to exhibit an element $Y \in 2^{\mathbb{N}}$ such that $Y \in N_\sigma \cap F \cap U_n$.

We consider two cases: first, suppose that there exists $k \in \mathbb{N}$ such that $f(k) = n$. (Classically, this just means $n\in \text{ran}(f)$, although we avoid using this terminology since we have not yet shown that $\text{ran}(f)$ formally exists.) In this case, any initial segment $\tau \subseteq X$ for which $|\tau| > k$ will be an element of the code $V_n$ by condition (2), implying that $X \in U_n$. Since $X$ was already assumed to be in $N_\sigma \cap F$, $X$ itself serves as our desired witness, and we are done.

Suppose instead that there is no $k \in \mathbb{N}$ for which $f(k) = n$. In this case, let $m \in \mathbb{N}$ be the length of the $n$-th column $(\sigma)_n$. Define an element $Y \in 2^{\mathbb{N}}$ by setting $Y(i) = 1$ if $i = (n,m)$ and $Y(i) = X(i)$ otherwise. Observe that $Y$ is $\Delta^0_1$-definable from $X, n,$ and $m$, so its existence is provable in $\RCA_0$.
We claim that $Y \in N_\sigma \cap F \cap U_n$.

First, note that the only entry of $Y$ which potentially differs from that of $X$ is the entry $(n, m)$; by the definition of length for the $n$-th column $(\sigma)_n$, we have $(n,m) \geq |\sigma|$, which means for all $i < |\sigma|$ we will have $Y(i) = X(i) = \sigma(i)$. Since we assumed $X \in N_\sigma$, we obtain $Y \in N_\sigma$ as well.

Additionally, if we take an initial segment $\tau \subseteq Y$ for which $|\tau| > (n,m)$, then we will have $m < |(\tau)_n|$, with $(\tau)_n(m) = \tau((n,m)) = Y((n,m)) = 1$. Thus, $\tau$ will be in the open code $V_n$ by condition (1), so $Y \in U_n$.

Finally, we argue that $Y \in F$. Let $\tau \subseteq Y$ be an arbitrary initial segment, and let $s < |\tau|$. We wish to show that $s$ and $\tau$ satisfy at least one of the two conditions for entering the tree $T$. If $s = n$, then we know condition (2) will be satisfied since we assumed that there was no $k \in \mathbb{N}$ for which $f(k) = n$. 

If $s \neq n$, consider the initial segment $\tau' \subseteq X$ for which $|\tau'| = |\tau|$. Since $X$ was assumed to be an element of $F = [T]$, we know $\tau' \in T$ and thus $\tau'$ satisfies one of conditions (1) and (2) with respect to $s$. If $\tau'$ and $s$ satisfy condition (2), then $\tau$ and $s$ will meet this condition as well since $|\tau| = |\tau'|$. Otherwise, for any $i < |(\tau')_s| = |(\tau)_s|$ we must have
\[
(\tau')_s(i) = \tau'((s, i)) = X((s,i)) = 0.
\]

One standard property of the pairing function (see Section II.2 of \cite{simpson2009subsystems}) is that it is injective: therefore, if $s \neq n$ we must have $(s, i) \neq (n,m).$ As $(n,m)$ was the only entry of $Y$ that was potentially changed from $X$, we can conclude that
\[
(\tau)_s(i) = \tau((s,i)) = Y((s,i)) = X((s,i)) = 0.
\]

Thus, $\tau$ and $s$ meet condition (1) for $T$; as these values were arbitrary, we obtain that $Y \in F$ as desired.

 \medskip
\textit{Step 3:} Apply $\BCTC$, and use the witness to conclude the existence of $\text{ran}(f)$.

\medskip
Since we have demonstrated that the hypotheses hold, we can now apply $\BCTC$ to conclude that $\bigcap_{n \in \mathbb{N}}U_n$ is dense in $F$. Observe that the sequence $X_0 \in 2^{\mathbb{N}}$ defined by $X_0(i) = 0$ for all $i \in \mathbb{N}$ would be an element of $F$, since any initial segment would meet condition (1) of entering $T$, for any value of $n$. Thus, for any initial segment $\sigma \subseteq X_0$, the set $N_\sigma \cap F$ would be nonempty. Fix an arbitrary such $\sigma$ (for instance, we could just take $\sigma = \langle \rangle$), and use density to obtain an element $Z \in 2^{\mathbb{N}}$ with $Z \in N_{\sigma} \cap F \cap (\bigcap_{n \in \mathbb{N}}U_n).$ We claim that the following correspondence holds for all $n \in \mathbb{N}$:
\[
n \in \text{ran}(f) \iff \forall i \in \mathbb{N} \: (Z)_n(i) = Z((n,i)) = 0.
\]

If this correspondence is true, then the parameter $Z$ provides a $\Pi^0_1$-definition for $\text{ran}(f)$. Combined with the $\Sigma^0_1$-definition provided by $f$ itself (i.e. $n \in \text{ran}(f)$ if and only if $\exists k \in \mathbb{N} \: f(k) = n$), we obtain that $\text{ran}(f)$ is $\Delta^0_1$-definable so exists via comprehension in $\RCA_0$.

To prove this correspondence, we actually show the contrapositive, i.e.

$$n \notin \text{ran}(f) \iff \exists i \in \mathbb{N} \:(Z)_n(i) = 1.$$

So let $n \in \mathbb{N}$ be arbitrary, and assume first that $n \notin \text{ran}(f)$. By assumption, we have $Z \in U_n$, which means there is some initial segment $\sigma \subseteq Z$ for which $\sigma \in V_n$. If $n \notin \text{ran}(f)$, then $\sigma$ cannot satisfy condition (2) for entering $V_n$, so we conclude that $\sigma$ satisfies condition (1). As $(\sigma)_n \subseteq (Z)_n$, for the given witness $i < |(\sigma)_n|$ we have $(Z)_n(i) = (\sigma)_n(i) = 1$, so our desired implication holds.

For the other implication, suppose $n$ is such that for some $i \in \mathbb{N}$, we have $(Z)_n(i) = 1$. Assume for contradiction that $n$ is in the range of $f$, and let $k \in \mathbb{N}$ be such that $f(k) = n$. Choose an initial segment $\sigma \subseteq Z$ long enough that we have both $|\sigma| > k$ and $|\sigma| > (n,i)$. By another monotonicity property of the pairing function, this last condition would also imply $|\sigma| > n$, and by definition we have $i < |(\sigma)_n|$.
Then for this choice of $\sigma$, both conditions (1) and (2) for entering the tree $T$ would fail with respect to $n$. This contradicts the assumption that $Z \in F$, implying that all initial segments of $Z$ are in $T$. Thus, we may conclude that $n \notin \text{ran}(f)$, completing the proof of this characterization and of the entire theorem.   
\end{proof}

\subsection{Monotone Functions on Closed Sets}
\label{ssec:monotone}

In this section, we introduce a new context in which we can consider $\BCTC$ and its reverse mathematical strength. This context involves the behavior of certain monotone functions defined on closed sets. 

\begin{definition}
    Let $T \subseteq \Str$ be a tree, and let $\tilde{f}: T \to \mathbb{R}$ be a function. 
   We say that $\tilde{f}$ is \textit{monotone decreasing} if $\tilde{f}(\sigma) \geq \tilde{f}(\tau)$ whenever $\sigma \subseteq \tau$. Similarly, we say that $\tilde{f}$ is \textit{monotone increasing} if $\tilde{f}(\sigma) \leq \tilde{f}(\tau)$ whenever $\sigma \subseteq \tau$.            
\end{definition}

\begin{definition} \label{def:monotone_limit}
    Let $T \subseteq \Str$ be a tree, and let $F = [T]$ be the corresponding closed set. If $\tilde{f}: T \to \mathbb{R}$ is a function and $X \in F$, we let $f(X) \in \mathbb{R}$ denote the real number
    $$f(X) = \lim_{n \to \infty}\tilde{f}(X \upharpoonright_n),$$
    assuming the limit exists.
\end{definition}

Observe that if $\tilde{f}$ is a monotone increasing function which is bounded above, or a monotone decreasing function which is bounded below, then the limit in the definition above always exists ``classically." However, this limiting value may not exist in the model for all $X \in F$ if we are working in a system strictly weaker than $\ACA_0$.

That said, we will sometimes abuse notation in the following way. If $\tilde{f}$ is a monotone decreasing function on a tree $T$, $X$ is an element of $[T]$, and $c$ is a real number, then we may write $f(X) \geq c$ to mean that $\tilde{f}(X \upharpoonright_n) \geq c$ for all $n \in \mathbb{N}$, even if the limiting value for $f(X)$ does not necessarily exist. Similarly, writing $f(X) < c$ is understood to mean that there exists some $n \in \mathbb{N}$ for which $\tilde{f}(X \upharpoonright_n) < c$. We use similar notation with the opposite inequalities for a monotone increasing function. 

Additionally, for convenience we may sometimes make statements about the abstract ``function'' $f$ from $[T]$ to $\mathbb{R}$, even though this object does not formally exist in second order arithmetic and must be interpreted in terms of the intermediate function $\tilde{f}$. This is analogous to the relationship between a tree and the closed set it induces, or between an open code and the associated open set.

With this broadening of notation, the following definition is intuitive.

\begin{definition} \label{def:inf}
    Let $T \subseteq \Str$ be a tree with corresponding closed set $F = [T]$, let $\tilde{f}: T \to \mathbb{R}$ be a monotone decreasing function, and let $f: F \to  \mathbb{R}$ be the induced function. We say that a real number $\alpha$ is an \textit{infimum} of $f$ over $F$ if the following conditions hold:
    \begin{enumerate}
        \item For all $X \in F$, $f(X) \geq \alpha$.
        \item For all $\epsilon > 0$, there exists some $X \in F$ such that $f(X) < \alpha + \epsilon$.
    \end{enumerate}
    The notion of \textit{supremum} for a monotone increasing function is defined similarly.
\end{definition}

It turns out that the realization of infima (or suprema) of certain types of functions gives equivalent characterizations of $\ACA_0$, which can be realized by considering their similarities to $\BCTC$. 

We observe that the conditions mentioned above are not sufficient to guarantee that the infimum is realized, even in the ``full'' $\omega$-model.

\begin{proposition} There exists a tree $T \subseteq 2^{<\omega}$, a monotone decreasing function $\tilde{f}: T \to \mathbb{R}$, and a real number $\alpha$ such that $\alpha$ is an infimum of the induced function $f$ over $F = [T]$, but there is no $X \in F$ for which $f(X) = \alpha$.
\end{proposition}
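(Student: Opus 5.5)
A direct counterexample should work: choose $T$ so that the infimum is approached along paths that drift off to a point where $\tilde f$ jumps. Take $T = 2^{<\omega}$, so $F = 2^\omega$, and let $0^n$ denote the string of $n$ zeros. Define $\tilde f(\sigma) = 1$ if $\sigma$ consists only of zeros (including the empty string). If $\sigma$ contains a $1$, let $k$ be the position of the first $1$ in $\sigma$ and set $\tilde f(\sigma) = 2^{-k}$. Take $\alpha = 0$.

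First I check that $\tilde f$ is monotone decreasing. If $\sigma \subseteq \tau$ and $\sigma$ is all zeros, then $\tilde f(\sigma) = 1 \geq \tilde f(\tau)$, since every value of $\tilde f$ is at most $1$. If $\sigma$ already contains a $1$, then $\tau$ has the same first $1$, so $\tilde f(\tau) = \tilde f(\sigma)$. The limit $f(X)$ therefore exists for every $X$. It equals $1$ when $X = 0^\omega$, and it equals $2^{-k}$ when the first $1$ of $X$ sits at position $k$.

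Next I verify that $\alpha = 0$ is an infimum in the sense of Definition~\ref{def:inf}. Condition (1) holds because every value of $\tilde f$ is positive, so $f(X) \geq 0$ for all $X$. For condition (2), given $\epsilon > 0$, pick $k$ with $2^{-k} < \epsilon$ and take $X = 0^k 1 0^\omega$. Then $\tilde f(X\upharpoonright_{k+1}) = 2^{-k} < \epsilon$.

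Finally, the infimum is not realized. Every $X$ has $f(X) \in \{1\} \cup \{2^{-k} : k \in \omega\}$, and all of these values are positive, so no $X \in F$ has $f(X) = 0$.

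There is no real obstacle here; the only point needing care is monotonicity at the jump from $1$ to $2^{-k}$. The intended moral, which I would remark on, is that the near-optimal paths $0^k10^\omega$ converge to $0^\omega$, yet $f(0^\omega)$ is as large as possible. So $f$ fails to be lower semicontinuous in the direction needed, and this is exactly the gap that the density hypothesis in $\DMM$ is meant to close.
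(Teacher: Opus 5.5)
Your construction is exactly the paper's: $T = 2^{<\omega}$, $\tilde f(\sigma) = 1$ on all-zero strings and $\tilde f(\sigma) = 2^{-k}$ otherwise, where $k$ is the position of the first $1$, with $\alpha = 0$. The paper gives only the construction, and your checks of monotonicity, of both infimum conditions, and of non-attainment correctly fill in the verification it leaves implicit.
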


\begin{proof}
 Let $T$ be all of $2^{<\omega}$ (meaning that $F$ is all of $2^{\omega}$). We define a monotone decreasing function $\tilde{f}: 2^{<\omega} \to \mathbb{R}$ by setting $\tilde{f}(\sigma) = 1$ if $\sigma$ consists entirely of 0's (or if $\sigma$ is the empty string), and otherwise setting $\tilde{f}(\sigma) = 2^{-k}$, where $k$ is the position of the first 1 in $\sigma$. 
\end{proof}

However, there are more restricted types of monotone decreasing functions that do realize their infimums. To describe them, we introduce some further terminology. For brevity, we assume going forward that $T, F, \tilde{f}, f,$ and $\alpha$ are defined as in Definition \ref{def:inf}.

\begin{definition}
    Let $X$ be an element of $[T] = F$. We say that $X$ is a \textit{limit point for} $\alpha$ if for every $\sigma \subseteq X$ and every $\epsilon > 0$, there exists $Y \in F$ such that $\sigma \subseteq Y$ and $f(Y) < \alpha + \epsilon$.
\end{definition}

In other words, if $X$ is a limit point, then we can find points which are arbitrarily close to $Y$ which get mapped arbitrarily close to $\alpha$. 

The following definition will be crucial in linking the existence of infima to the Baire Category Theorem.

\begin{definition} \label{def:f-dense}
    We say that $f$ is \textit{dense above} $\alpha$ if every $X \in F$ is a limit point for $\alpha$.
\end{definition}

We can now state the main principle of this subsection.

\begin{DMMin}[$\DMM$]
   If $f$ is dense above $\alpha$, then there exists $X \in F$ for which $f(X) = \alpha$.
\end{DMMin}

Given the ``density" properties displayed by the notion of limit point, it is not terribly surprising that the Dense Monotone Minimum principle is related to $\BCTC$. While $\BCTC$ indeed proves $\DMM$, we are also able to show that $\DMM$ is equivalent to a (potentially weaker) variant of $\BCTC$, which which call $\BCTC$-N (`N' for \emph{nested}).

\medskip\noindent
\textbf{Nested Baire Category Principle} ($\BCTC$-N). Let $F \subseteq 2^{\mathbb{N}}$ be a standard closed set. Suppose $\{U_n\}_{n \in \mathbb{N}}$ is a sequence of open sets in $2^{\mathbb{N}}$ such that each $U_n$ is dense in $F$, and such that for all $n,k \in \mathbb{N}$, $n \leq k$ implies $U_n \supseteq U_k$. Then $\bigcap_{n \in \mathbb{N}}U_n$ is dense in $F$.

\medskip
Clearly, $\BCTC$ implies $\BCTC$-N. It can be shown (see~\cite{Gruner:2026a}) that the two statements are $\omega$-equivalent. We have not been able to show that the two principles are equivalent over $\RCA_0$, the main issue being that it appears induction needs be applied to formulas of arithmetic complexity beyond $\Sigma^0_1/\Pi^0_1$.

\begin{theorem} \label{thm:DMMin}
    Over $\RCA_0$, $\DMM$ is equivalent to $\BCTC\text{-}\mathrm{N}$.
\end{theorem}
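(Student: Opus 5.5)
We prove the two directions separately, working over $\RCA_0$ throughout.

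\emph{$\BCTC$-N implies $\DMM$.} Let $T$, $\tilde f$, $\alpha$ be given with $f$ dense above $\alpha$. For each $n$ define an open code
\[
V_n=\{\sigma\in\Str : \exists \tau\subseteq\sigma\ (\tau\in T \wedge \tilde f(\tau)<\alpha+2^{-n})\}.
\]
This set is upward closed. Strict inequality between reals is $\Sigma^0_1$, so we pass to a $\Delta^0_1$ version. The clean choice is to enumerate witnesses and put $\sigma\in V_n$ when such a $\tau$ together with a rational witness of $\tilde f(\tau)<\alpha+2^{-n}$ appears with code below $|\sigma|$. This keeps the set upward closed and gives the same open set $U_n$. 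Moreover $U_n\supseteq U_{n+1}$ because $\alpha+2^{-n-1}<\alpha+2^{-n}$.

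Density of $U_n$ in $F=[T]$ follows from the definition. If $N_\sigma\cap F\neq\emptyset$, pick $X$ in it. Then $X$ is a limit point for $\alpha$, so there is $Y\in F$ with $\sigma\subseteq Y$ and $f(Y)<\alpha+2^{-n}$. By the notational convention this means some $\tilde f(Y\upharpoonright_k)<\alpha+2^{-n}$, so $Y\in U_n$.

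Next, $F$ is nonempty: it contains a point with $f<\alpha+1$ by clause (2) of Definition~\ref{def:inf}, so $N_{\langle\rangle}\cap F\neq\emptyset$. Apply $\BCTC$-N to get $X\in F\cap\bigcap_n U_n$. For each $n$ there is $k$ with $\tilde f(X\upharpoonright_k)<\alpha+2^{-n}$, and monotonicity propagates this bound to all longer segments. Together with $f(X)\ge\alpha$ from clause (1), this shows that $\tilde f(X\upharpoonright_k)\to\alpha$. So the limit exists and equals $\alpha$. Formalizing the limit requires a modulus, but the function $n\mapsto$ least $k$ is obtained by minimization since the witnesses are $\Sigma^0_1$ and exist.

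\emph{$\DMM$ implies $\BCTC$-N.} Given $T$ and nested open codes $V_n$, each $U_n$ dense in $F$, and $\sigma^*$ with $N_{\sigma^*}\cap F\ne\emptyset$, we may pass to the tree $T^*=\{\tau\in T:\tau\text{ compatible with }\sigma^*\}$. Then $[T^*]=N_{\sigma^*}\cap F$, and each $U_n$ is still dense in $[T^*]$. Define
\[
\tilde f(\tau)=2^{-n(\tau)}, \qquad n(\tau)=\max\{n\le|\tau| : \exists \rho\subseteq\tau\ (\rho\in V_n)\},
\]
with $n(\tau)=-1$ (value $2$) if no such $n$ exists. This is computable and monotone decreasing. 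Using nestedness of the codes (we replace $V_n$ by $\bigcap_{m\le n}V_m$ computably if needed, which has the same open set by nestedness), we get $f(X)=0$ iff $X\in\bigcap_n U_n$, reading $f(X)=0$ as $\tilde f(X\upharpoonright_k)\to0$.

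We take $\alpha=0$. Clause (1) of Definition~\ref{def:inf} is trivial. For density above $0$: if $X\in[T^*]$, $\sigma\subseteq X$ and $\epsilon>0$, choose $n$ with $2^{-n}<\epsilon$. Density of $U_n$ gives $Y\in N_\sigma\cap[T^*]\cap U_n$, so $Y\upharpoonright_k\in V_n$ for some $k$ (take $k\ge n$). Nestedness gives $Y\upharpoonright_k\in V_m$ for all $m\le n$, so $\tilde f(Y\upharpoonright_k)\le 2^{-n}<\epsilon$. Clause (2) is the same argument with $\sigma=\langle\rangle$. Now $\DMM$ yields $X\in[T^*]$ with $f(X)=0$, which means $X\in N_{\sigma^*}\cap F\cap\bigcap_n U_n$.

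\emph{Main obstacle.} The delicate points are the $\RCA_0$ bookkeeping: turning the $\Sigma^0_1$ real inequalities into $\Delta^0_1$ open codes, and, in the second direction, ensuring $\tilde f$ genuinely tracks membership in all $U_m$, $m\le n$. This is exactly where the nested hypothesis is used; without it the maximal $n$ could be achieved while some smaller $U_m$ is missed. I would first try the intersected codes $\bigcap_{m\le n}V_m$ and check that the induction used stays at $\Sigma^0_1$.
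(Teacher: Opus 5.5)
Your proposal is correct and follows essentially the same route as the paper. For $\BCTC$-N $\Rightarrow$ $\DMM$ you build nested $\Delta^0_1$ codes $V_n$ from witnesses of $\tilde f(\tau)<\alpha+2^{-n}$ and get a point from clause (2) of the infimum; for the converse you restrict $T$ to strings compatible with $\sigma^*$, take $\alpha=0$, and let $\tilde f(\tau)$ be given by the largest $n\le|\tau|$ with $\tau\in V_n$, exactly as the paper does. The only cosmetic difference is that you make the codes themselves nested via $\bigcap_{m\le n}V_m$ (which needs upward-closed codes, as the paper assumes), whereas the paper uses nestedness of the open sets only at the final step, to pass from $Z\in U_k$ with $k\ge n$ to $Z\in U_n$.
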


We need to introduce some notation which will be helpful for the subsequent proofs, as well as some of the constructions in Section~\ref{sec:regularity}.

\begin{definition} \label{def:m_order}
    Let $\alpha$ and $\beta$ be real numbers within second order arithmetic, and let $\alpha = \langle a_n \rangle_{n \in \mathbb{N}}$ and $\beta = \langle b_n \rangle_{n \in \mathbb{N}}$ denote their rational sequence representations. Given $k \in \mathbb{N}$, we write $\alpha <_k \beta$ if $b_k - a_k > 2^{-k+1}$. Otherwise, we write $\alpha \geq_k \beta$.
\end{definition}

By the definition of order on the real numbers discussed in Section II.4 of \cite{simpson2009subsystems}, we have that $\alpha < \beta$ if and only if there exists some $k \in \mathbb{N}$ such that $\alpha <_k \beta$.

\begin{proof}[Proof of Theorem~\ref{thm:DMMin}]
    We first show that $\BCTC$-N implies $\DMM$. Let $T$, $F$, $\tilde{f}$, $f$, and $\alpha$ be as defined in the statement of Dense Monotone Minimum. 
    We define a ($\Delta^0_1$-definable) sequence of open codes $\langle V_n \rangle_{n \in \mathbb{N}}$ by setting
    \[
    V_n = \{\sigma \in 2^{<\mathbb{N}}: \sigma \notin T \text{ or } \sigma \in T \wedge \exists k, m \leq |\sigma| \: \tilde{f}(\sigma \upharpoonright_k) <_m \alpha + 2^{-n}\}.
    \]

    Since $T$ is a tree, it follows that each $V_n$ is closed upward under taking extensions, and thus yields a valid open code. Additionally, we observe that the sets $V_n$ are nested decreasing. If $\sigma \in V_n$ for some $n$, then we must also have $\sigma \in V_i$ for all $i < n$, as $\alpha + 2^{-n} < \alpha + 2^{-i}.$ (Note that if $\alpha \in \mathbb{R}$ and $q$ is rational, we can obtain a rational sequence representation for $\alpha + q$ by just adding $q$ to each entry of the rational sequence for $\alpha$.)

    Let $\langle U_n \rangle_{n \in \mathbb{N}}$ denote the sequence of open sets corresponding to the open codes $\langle V_n \rangle_{n \in \mathbb{N}}$. Note that this sequence is nested decreasing by the remark above. Furthermore, we claim that each set $U_n$ is dense in $F$. 
    
    Indeed, suppose $\sigma \in 2^{<\mathbb{N}}$ is such that $N_{\sigma} \cap F \neq \emptyset$, and let $X$ be an element of $F$ extending $\sigma$. Since $f$ is dense above $\alpha$, we know that $X$ is a limit point for $\alpha$. Therefore, there must be some $Y \in F$ for which $\sigma \subseteq Y$ and $f(Y) < \alpha + 2^{-n}$. By the definition of limit, there must be some $k \in \mathbb{N}$ for which $\tilde{f}(Y \upharpoonright_k) < \alpha + 2^{-n}$. Let $m \in \mathbb{N}$ be such that $\tilde{f}(Y \upharpoonright_k) <_m \alpha + 2^{-n}$, and let $\tau \subseteq Y$ be an initial segment such that $|\tau| \geq k, m$. It follows that this initial segment $\tau$ is an element of $V_n$, so $Y \in U_n$. As we also have $Y \in N_{\sigma} \cap F$, we have witnessed our density claim. 
    
    We can now apply $\BCTC$-N to conclude that $\bigcap_{n \in \omega}U_n$ is dense in $F$. Since $F$ is nonempty (this is guaranteed by condition (2) in the definition of infimum), the intersection $\left (\bigcap_{n \in \omega}U_n \right)\cap F$ is nonempty as well. We claim that for any $Z \in 2^{\mathbb{N}}$ lying in this intersection, we must have $f(Z) = \alpha.$ 
    
    On one hand, $Z \in F$ so we must have $f(Z) \geq \alpha$ by the definition of infimum. On the other hand, given any $\epsilon > 0$, we may choose $n \in \mathbb{N}$ for which $2^{-n} < \epsilon,$ and by assumption we have $Z \in U_n$. If we take $\sigma \subseteq Z$ for which $\sigma \in V_n$, we see that we must have $k, m \leq |\sigma|$ for which $\tilde{f}(\sigma \upharpoonright_k) = \tilde{f}(Z \upharpoonright_k) <_m \alpha + 2^{-n}.$ By definition of order, this gives $\tilde{f}(Z_k) < \alpha + 2^{-n} < \alpha + \epsilon$, and thus $f(Z) < \alpha + \epsilon$ by monotonicity. Since $\epsilon$ was arbitrary, we obtain $f(Z) = \alpha$ as desired.

\medskip
For the converse implication, assume that $F$ is a standard closed set, and that $\langle U_n\rangle_{n \in \mathbb{N}}$ is a sequence of nested decreasing open sets in $2^{\mathbb{N}}$ such that each $U_n$ is dense in $F$. We wish to show that the intersection $\bigcap_{n \in \omega}U_n$ is dense in $F$. 

So let $\nu \in 2^{<\mathbb{N}}$ be such that $N_{\nu} \cap F$ is nonempty. If $T \subseteq 2^{<\mathbb{N}}$ is our tree representing $F$, define a tree $T_{\nu}$ by 
$$T_{\nu} = \{ \sigma \in T: \sigma \subseteq \nu \text{ or } \nu \subseteq \sigma\}.$$

It is clear that $T_{\nu}$ is a tree, and that it is $\Delta^0_1$-definable from $T$ and $\nu$ so exists under $\text{RCA}_0$. We also see that the standard closed set $F_{\nu} \subseteq 2^{\mathbb{N}}$ corresponding to $T_\nu$ consists precisely of the elements in $N_\nu \cap F$.

Let $\langle V_n \rangle_{n \in \mathbb{N}}$ be the sequence of open codes representing the open sets $\langle U_n \rangle$. 
Define a monotone decreasing function
     $\tilde{f}: T_\nu \to \mathbb{R}$ by 
     \[
     \tilde{f}(\sigma) = 
    \begin{cases}
        1, \text{ if } \forall i < |\sigma|, \sigma \notin V_i \\
        2^{-k-1}, \text{ if } k < |\sigma|, \sigma \in V_k, \text{ and } \forall i < |\sigma|,  i > k \rightarrow \sigma \notin V_i\}. 
    \end{cases}
     \]
Intuitively, the above definition just says that $\tilde{f}(\sigma)$ outputs $2^{-k-1}$ for the maximum value of $k < |\sigma|$ for which $\sigma \in V_k$, with $\tilde{f}(\sigma) = 1$ if no such $k$ exists. We just express it in the above manner to make it clear that the function is $\Delta^0_1$-definable. Given that the sets $V_n$ are all closed upwards under taking extensions, it is straightforward to check that $\tilde{f}$ is monotone decreasing. 

We claim that $0$ is an infimum of the induced function $f$ over $F_{\nu}$, and that every $X \in F_{\nu}$ is a limit point over $0$. On one hand, $0$ is certainly a lower bound for $f$, since $\tilde{f}$ only outputs positive values. We claim that for every $X \in F_{\nu}$, every $\sigma \subseteq X$, and every $\epsilon > 0$, there exists $Y \supseteq \sigma$ such that $f(Y) < \epsilon$. This will show both the claim regarding limit points, and the fact that $0$ is the greatest lower bound for $f$. Note that we assumed $N_\nu \cap F = F_\nu$ was nonempty, so we have at least one $X \in F_{\nu}$ over which we can apply this statement.

Indeed, given this arbitrary $X \in F_{\nu} = N_\nu \cap F$ and $\sigma \subseteq X$, let $\mu$ denote whichever of $\nu$ or $\sigma$ has maximal length; then $N_\mu \cap F = N_\sigma \cap F_\nu$, and this set is nonempty since it contains $X$. For $\epsilon > 0$, let $n \in \mathbb{N}$ be such that $2^{-n-1} < \epsilon$. By assumption, the open set $U_n$ is dense in our original closed set $F$, which means there exists some $Y \in N_\mu \cap F \cap U_n$. As $Y \in F_{\nu}$ and $Y \supseteq \sigma$, this $Y$ will serve as our desired witness so long as we show that $f(Y) < \epsilon$. To that end, take $\tau \subseteq Y$ sufficiently long that we have both $\tau \in V_n$ and $|\tau| > n$. 
Then by the definition of $\tilde{f}$, we will have $\tilde{f}(\tau) = 2^{-k-1}$ for some $k \geq n$.  This implies $\tilde{f}(\tau) \leq 2^{-n-1} < \epsilon,$ and thus $f(Y) < \epsilon$ as desired.

At this point, we have shown that the hypotheses of $\DMM$ hold, so we can apply this principle to choose an element $Z \in F_{\nu}$ for which $f(Z) = 0$. We claim that this element $Z$ is in $N_{\nu} \cap F \cap (\bigcap_{n \in \mathbb{N}}U_n)$, which would prove our density claim. The fact that $F_{\nu} = N_{\nu} \cap F$ means that the first two memberships are clear, so it remains to show that $Z \in U_n$ for all $n \in \mathbb{N}$. 

So given an arbitrary $n \in \mathbb{N}$, choose $\tau \subseteq Z$ sufficiently long that $\tilde{f}(\tau) \leq 2^{-n-1}$, which is possible by the definition of infimum. Then there is some $k < |\tau|$ such that $\tilde{f}(\tau) = 2^{-k-1}$, implying $\tau \in V_k$ and thus $Z \in U_k$. The fact that $\tilde{f}(\tau) = 2^{-k-1} \leq 2^{-n-1}$ implies that $k \geq n$; as our sequence of open sets was nested decreasing, this implies $Z \in U_n$ as desired.
\end{proof}

Combining Theorem~\ref{thm:DMMin} with Theorem~\ref{thm:BCTC}, we obtain the following.

\begin{corollary} \label{cor:DMMin_ACA}
    $\DMM$ is provable in $\ACA_0$.
\end{corollary}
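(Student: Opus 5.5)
The plan is to chain the two equivalences already established. Work in $\ACA_0$. By Theorem~\ref{thm:BCTC}, $\ACA_0$ proves $\BCTC$ (that is, $\BCTC$-I for standard closed sets). From $\BCTC$ we get $\BCTC$-N immediately, since $\BCTC$-N is just the special case of $\BCTC$ in which the sequence $\langle U_n\rangle_{n\in\N}$ is additionally nested decreasing. No extra argument or induction is needed for this step; the hypotheses of $\BCTC$-N are literally a strengthening of those of $\BCTC$.

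Next, apply the direction of Theorem~\ref{thm:DMMin} showing that $\BCTC$-N implies $\DMM$ over $\RCA_0$. Since $\ACA_0$ extends $\RCA_0$, that implication is also available inside $\ACA_0$. Combining the two steps gives $\ACA_0 \vdash \DMM$.

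There is no real obstacle here. The only point to check is that the implication in Theorem~\ref{thm:DMMin} was proved over $\RCA_0$ and not merely for $\omega$-models, and the theorem is stated that way. Hence the conclusion holds over $\ACA_0$ in full, not just in its $\omega$-models.

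Alternatively, one could argue directly. Given $T$, $\tilde f$ and $\alpha$ as in $\DMM$, use arithmetic comprehension (Theorem~\ref{ACA-T}(i)) to replace $T$ by a pruned tree with the same paths, restricting $\tilde f$ to it. Then run the $\BCTC$-III construction of Proposition~\ref{BCTC-II/III} on the open sets $U_n$ built in the proof of Theorem~\ref{thm:DMMin}. The route through the two earlier theorems is shorter, and it is the one I would use.
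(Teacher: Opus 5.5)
Your proposal is correct and matches the paper's argument: the paper obtains the corollary by combining Theorem~\ref{thm:BCTC} (so $\ACA_0$ proves $\BCTC$), the remark that $\BCTC$ implies $\BCTC\text{-}\mathrm{N}$, and the direction $\BCTC\text{-}\mathrm{N} \Rightarrow \DMM$ of Theorem~\ref{thm:DMMin}, which is proved over $\RCA_0$. One small correction: $\BCTC\text{-}\mathrm{N}$ is not literally a special case of $\BCTC$, because it drops the requirement that the $U_n$ be nonempty; but density in a nonempty $F$ forces each $U_n$ to be nonempty, and when $F$ is empty the conclusion holds vacuously, so the specialization is still immediate.
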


\section{Measure Regularity Results}
\label{sec:regularity}

The function $H^s_n$ function represents a natural example of a monotone decreasing function on Cantor space. Therefore, the results from the previous section can help us deduce the reverse mathematical strength of some measure regularity results. These same techniques will also aid us in the analysis of the Besicovitch Theorem for Hausdorff measure in the final section.

\subsection{Regularity of \texorpdfstring{$H^s_n$}{H\^s\_n}-measure}
\label{ssec:regularity}

In this section, we assume that we have some fixed $s > 0$ and $n \in \omega$, and we will work with the $H^s_n$ measure as defined in Section~\ref{ssec:Hmeas}. For these results, we assume that we start with some (standard or pruned) closed set $F$, and we seek to construct a closed subset $E \subseteq F$ with some specified measure. 
The complexity of this construction varies depending on how the closed sets are represented as well as on how precisely the measures are specified.

Note that if we consider the specific measure $H^1_{1}$, we obtain a version of the standard Lebesgue measure. This will be equivalent to the normal definition of Lebesgue measure in $\WKL_0$ and stronger systems, but in $\RCA_0$ it still provides a well-defined function on the space of ``tree codes." 
We will consider the general measure $H^s_n$ (for $s \geq 0$ and $n \in \omega$) here, since those versions are needed for the discussion on Besicovitch's Theorem later. 

As in Section~\ref{ssec:monotone}, we will abuse notation slightly in writing inequalities involving $H^s_n$, since the limiting value of $H^s_n(Z)$ itself may not exist for every tree code $Z$ without the axioms of $\ACA_0$. (See the discussion following Definition \ref{def:monotone_limit}.)

First, we demonstrate how closed sets which satisfy a desired measure bound can be coded topologically. 
Recall from Section~\ref{ssec:closed} that for a non-trivial standard closed set $F \subseteq 2^{\omega}$ with a tree representation $T_F \subseteq 2^{<\omega}$, $S_{T_F}$ and $\tilde{S}_{T_F}$ represent the trees whose infinite paths code standard subtrees and pruned subtrees of $T_F$, respectively. 

\begin{definition} \label{S^c_F} 
Let $Z_F \in 2^{\omega}$ be a code for $T_F$, and let $c$ be a real number such that $0 \leq c \leq H^s_n(Z_F)$. Then we define $\mathcal{S}^c_F \subseteq 2^{\omega}$ as the closed set generated by the following tree:
$$\{\nu \in S_{T_F}: \forall k,m \leq |\nu| \;\:\tilde{H^s_n}(\nu\restriction_k) \geq_m c \}.$$
\end{definition}

Here $\geq_m$ denotes the order from Definition~\ref{def:m_order}. 
Note that the monotonicity of the function $\tilde{H}^s_n$ guarantees that the set above is indeed a tree.
We define $\tilde{\mathcal{S}}^c_F$ similarly, using $\tilde{S}_{T_F}$ in place of $S_{T_F}$. 

Observe that $T_F^c$ and $\tilde{T}_F^c$ are computable from $T_F$ and $c$, and thus any $\omega$-model of $\text{RCA}_0$ containing both of these objects must contain $T_F^c$ and $\tilde{T}_F^c$ as well. 

The following lemma is a straightforward consequence from the definition of $H^s_n$ and of order on the real numbers; we state it anyway given its importance to the arguments going forward.

\begin{lemma} \label{lem:H^s_n_c}
    Let $Z$ be an element of $[S_{T_F}]$, which codes a standard closed subset $E_Z \subseteq F$. Then $Z \in \mathcal{S}_F^c$ if and only if $H^s_n(Z) \geq c$. In an $\omega$-model of $\WKL_0$, this is further equivalent to $\mathcal{H}^s_{2^{-n}}(E_Z) \geq c$. An analogous statement can be made for the elements of $[\tilde{S}_{T_F}]$, which code pruned closed subsets of $F$.
\end{lemma}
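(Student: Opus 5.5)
The plan is to unwind the definitions of $\mathcal{S}^c_F$ and of the abuse-of-notation convention for $H^s_n(Z) \geq c$, and then invoke Corollary~\ref{WKL-meas} for the $\WKL_0$ clause. Recall that ``$H^s_n(Z) \geq c$'' is by convention shorthand for $\tilde{H}^s_n(Z\restriction_k) \geq c$ for all $k$. By the definition of order on reals via rational sequences, $\beta \geq c$ holds if and only if $\beta \geq_m c$ holds for every $m$. Indeed, $\beta < c$ iff $\beta <_m c$ for some $m$, as noted after Definition~\ref{def:m_order}. So I would first record the chain
\[
H^s_n(Z) \geq c \iff \forall k\, \tilde{H}^s_n(Z\restriction_k) \geq c \iff \forall k\,\forall m\; \tilde{H}^s_n(Z\restriction_k) \geq_m c .
\]

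Next I would compare this with membership in $\mathcal{S}^c_F$. We have $Z \in \mathcal{S}^c_F$ iff every initial segment $\nu = Z\restriction_\ell$ lies in the defining tree, i.e.\ $\nu \in S_{T_F}$ and $\tilde{H}^s_n(\nu\restriction_k) \geq_m c$ for all $k,m \leq \ell$. The condition $\nu \in S_{T_F}$ is automatic since $Z \in [S_{T_F}]$, and $\nu\restriction_k = Z\restriction_k$ for $k \leq \ell$.

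For the forward direction, given arbitrary $k,m$, take $\ell = \max(k,m)$ to get $\tilde{H}^s_n(Z\restriction_k) \geq_m c$. For the converse, the bounded quantifiers are instances of the unbounded ones. This gives the first equivalence, and it is all $\Delta^0_1$ bookkeeping, so it holds in $\RCA_0$. One small point to check is that the definition of $\mathcal{S}^c_F$ uses $\nu\restriction_k$ with $k \leq |\nu|$, which matches the universally quantified condition exactly.

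For the $\WKL_0$ clause, I would apply Corollary~\ref{WKL-meas} to $E_Z$ and its code $Z$. The main subtlety, which I expect to be the only real obstacle, is that $H^s_n(Z)$ or $\mathcal{H}^s_{2^{-n}}(E_Z)$ need not exist as reals in the model, so the corollary cannot be applied verbatim. Instead I would use its ingredients directly, reading $\mathcal{H}^s_{2^{-n}}(E_Z) \geq c$ as ``every $2^{-n}$-cover has weight $\geq c$.''

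Suppose some $2^{-n}$-cover has weight $< c$. Then Proposition~\ref{prop:WKL-meas1} yields $\nu \subseteq Z$ with $\tilde{H}^s_n(\nu) < c$, so $H^s_n(Z) \not\geq c$. Conversely, suppose some $\tilde{H}^s_n(Z\restriction_k) < c$. Then Proposition~\ref{prop:RCA-meas} produces a $2^{-n}$-cover of weight $< c$.

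The pruned version is identical with $\tilde{S}_{T_F}$ and $\tilde{\mathcal{S}}^c_F$ in place of $S_{T_F}$ and $\mathcal{S}^c_F$, since condition (3) plays no role in the argument.
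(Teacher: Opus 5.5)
Your proposal is correct and takes essentially the paper's approach: the paper gives no written proof, only calling the lemma a straightforward consequence of the definition of $H^s_n$ and of the order on the reals, with the $\WKL_0$ clause resting on the comparison results of Section~\ref{ssec:Hmeas}. Using Propositions~\ref{prop:RCA-meas} and~\ref{prop:WKL-meas1} directly, rather than Corollary~\ref{WKL-meas} (which presupposes that one of the two quantities exists), is the right way to make that step rigorous.
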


We now proceed to our main lemma.

\begin{lemma} \label{H^s_n-inf} The following holds in all $\omega$-models of $\RCA_0$: Let $F$ be a nontrivial closed set, and let $\mathcal{S}^c_F$ and $\tilde{\mathcal{S}}^c_F$ be defined as above. If $\mathcal{S}^c_F$ (respectively $\tilde{\mathcal{S}}^c_F$) is nonempty, we have the following properties for the restriction of the monotone decreasing function $H^s_n$ (using the concepts introduced in Definitions~\ref{def:inf} and~\ref{def:f-dense}, respectively):
\begin{itemize}
    \item $c$ is an infimum of $H^s_n$ over $\mathcal{S}^c_F$ (respectively $\tilde{\mathcal{S}}^c_F$)
    \item $H^s_n$ is dense above $c$ over $\mathcal{S}^c_F$ (respectively $\tilde{\mathcal{S}}^c_F$)
\end{itemize}
\end{lemma}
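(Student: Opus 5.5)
The plan is to prove density directly and get the infimum property from it. By Lemma~\ref{lem:H^s_n_c}, every $Z \in \mathcal{S}^c_F$ satisfies $H^s_n(Z) \geq c$, which is condition (1) of Definition~\ref{def:inf}. Condition (2) follows from density applied to any single element of the nonempty set $\mathcal{S}^c_F$, taking $\sigma = \langle\rangle$.

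\emph{Reduction.} It suffices to show the following. Given $Z \in \mathcal{S}^c_F$, an initial segment $\nu \subseteq Z$ and $\epsilon>0$, there is $Y \in \mathcal{S}^c_F$ with $\nu \subseteq Y$ and $H^s_n(Y) < c+\epsilon$. By monotonicity I may lengthen $\nu$ and assume $\nu = Z^{\leq m}$ with $m \geq n$. I then fix $M \geq m$ large, to be chosen below.

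\emph{Construction.} For a set $A$ of level-$M$ strings of the tree $T_Z$, let $Y_A$ code the tree consisting of all strings of $T_Z$ of length $\leq M$, together with those strings of $T_Z$ of length $>M$ that extend some element of $A$. Each $Y_A$ is computable from $Z$ and the finite set $A$, so it lies in the $\omega$-model. It agrees with $Z$ on all strings of length $\leq M$, hence extends $\nu$, and it lies in $[S_{T_F}]$. Taking $A$ to be all level-$M$ nodes gives $Y_A = Z$.

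I enumerate these level-$M$ nodes as $\sigma_1,\dots,\sigma_r$ and set $A_j = \{\sigma_{j+1},\dots,\sigma_r\}$. Two estimates drive the argument.
\begin{itemize}
\item Consecutive values are close: $H^s_n(Y_{A_j}) \leq H^s_n(Y_{A_{j+1}}) + 2^{-sM}$. Any admissible cover $V$ for $Y_{A_{j+1}}$ at a level $k \geq M$, together with the string $\sigma_{j+1}$ (of length $M \geq n$), is admissible for $Y_{A_j}$ at the same level $k$. This is a direct check of conditions (a) and (b) of Definition~\ref{def:H-tilde}.
\item The endpoint is small: $H^s_n(Y_{A_r}) = H^s_n(Y_\emptyset) = 0$. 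At levels $k > M$ no string of $T_{Y_\emptyset}$ survives, so the empty cover is admissible.
\end{itemize}

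\emph{Conclusion.} If $c = 0$, the element $Y_\emptyset$ is already the witness. If $c>0$, choose $M$ with $2^{-sM} < \epsilon$; here $s>0$ is used. Each statement ``$Y_{A_j} \in \mathcal{S}^c_F$'' is $\Pi^0_1$, but there are only finitely many $j$, so classically there is a largest $j<r$ with $Y_{A_j} \in \mathcal{S}^c_F$. Such a $j$ exists because $j=0$ qualifies, and $j<r$ because $H^s_n(Y_{A_r}) = 0 < c$. By maximality $H^s_n(Y_{A_{j+1}}) < c$, so by the first estimate $H^s_n(Y_{A_j}) < c + \epsilon$. Thus $Y = Y_{A_j}$ is the required witness. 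No comprehension beyond computability from $Z$ is needed, which is why the argument works in any $\omega$-model of $\RCA_0$.

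\emph{Pruned case and main obstacle.} The pruned case $\tilde{\mathcal{S}}^c_F$ is where I expect the main difficulty. Deleting a level-$M$ node can leave dead ends, so I would instead delete the node together with those of its ancestors above level $m$ that have no surviving extensions. Every path removed still lies in $N_{\sigma_{j+1}}$, so the $2^{-sM}$ increment bound survives.

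I may never delete the last descendant of any level-$m$ node of $T_Z$, since otherwise the code would no longer extend $\nu$. So the sequence must stop at a set $A_r$ containing one level-$M$ node $\tau_i$ below each of the $K$ level-$m$ nodes. For this endpoint I would use the cover $\{\tau_1,\dots,\tau_K\}$ directly, giving $H^s_n(Y_{A_r}) \leq K\,2^{-sM}$. Since $K$ depends only on $m$, choosing $M$ large makes this less than $c$, and the same maximal-$j$ argument then goes through.
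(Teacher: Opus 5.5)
Your proof is correct, and it runs on the same engine as the paper's. You build a finite chain of codes, all extending the given initial segment, that joins $Z$ to a code of small $H^s_n$-value. Consecutive codes differ only inside one cylinder of weight $<\epsilon$, and a discrete intermediate-value step finishes. In what follows your $m, M$ correspond to the paper's $n', m$.

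The differences are in the details:
\begin{itemize}
\item \emph{The chain.} The paper uses one chain for both the standard and pruned cases. It starts from a thinned code $Y_0$ that keeps at most one (leftmost) level-$m$ node above each of the $K$ level-$n'$ nodes, so $H^s_n(Y_0)\le K2^{-sm}<c+\epsilon$. It then adds the deleted level-$m$ subtrees back one at a time, until the code agrees with $Z$ from level $m$ on. In the standard case you instead truncate $Z$ above level $M$, which gives the cleaner endpoint $H^s_n(Y_\emptyset)=0$. Your pruned-case chain is the paper's chain read in reverse.
\item \emph{The selection step.} Here your version is genuinely simpler. The paper fixes each level $k$, takes the first index $i$ with $\tilde H^s_n(Y_i^{\le k})\in[c,c+\epsilon)$, and then needs the infinite pigeonhole principle to find one index chosen for infinitely many $k$. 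You take the largest $j$ with $Y_{A_j}\in\mathcal{S}^c_F$, and bound $H^s_n(Y_{A_j})$ using a single level witnessing $H^s_n(Y_{A_{j+1}})<c$.
\end{itemize}
Your selection avoids infinite pigeonhole, which is the tool the paper notes (in the proof of Lemma~\ref{BD-fixed}) is not available in $\RCA_0$. The finite set of good indices already exists by bounded $\Sigma^0_1$ comprehension, so this step does not depend on working in an $\omega$-model.

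One small slip concerns the pruned case with $c=0$. There $Y_\emptyset$ is not pruned, and the maximal-$j$ argument has no failing endpoint. You should instead choose $M$ with $K2^{-sM}<\epsilon$ and take $Y_{A_r}$ itself as the witness. This is what the paper's condition $K2^{-sm}<c+\epsilon$ accomplishes.
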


\begin{proof}
For ease of notation, we will use the set $\mathcal{S}^c_F$ for the following proof. The argument for $\tilde{\mathcal{S}}^c_F$ is the same, and we will point out the places where the distinction is relevant. 

Assume $\mathcal{S}_F^c$ is nonempty. As mentioned above, all elements $Z \in \mathcal{S}^c_F$ will satisfy $H^s_n(Z) \geq c$. Therefore, it suffices to show that given an arbitrary $Z \in \mathcal{S}^c_F$, any finite initial segment $\tau \subseteq Z$, and any $\epsilon > 0$, there exists some $X \in \mathcal{S}^c_F$ with $\tau \subseteq X$ and $c \leq H^s_n(X)< c + \epsilon.$ 

First, by extending $\tau$ along $Z$ if necessary, we may assume without loss of generality that $\tau$ has the form $Z^{\leq n'}$ for some $n' \geq n.$ Let $K$ denote the number of length-$n'$ strings $\sigma$ for which $Z(\sigma) = 1$, and choose $m \geq n'$ so large that we have both $2^{-sm} < \epsilon$ and $K \cdot 2^{-sm} < c + \epsilon$. Define an element $Y_0 \in 2^{\omega}$ as follows: 
\begin{enumerate}[(1)]
\item If $|\sigma| \leq n'$, $Y_0(\sigma) = Z(\sigma)$.
\item If $n' < |\sigma| \leq m$, set $Y_0(\sigma) = 1$ if there exists at least one length-$m$ extension $\sigma' \supseteq \sigma$ for which $Z(\sigma') = 1$, and there exists no lexicographically smaller extension of $\sigma \upharpoonright_{n'}$ of length $|\sigma|$ which has this property. Otherwise, set $Y_0(\sigma) = 1$.
\item If $|\sigma| > m$, then set $Y_0(\sigma) = 1$ if $Z(\sigma) = 1$ and $Y_0( \sigma \upharpoonright_m) = 1$ (according to the previous case). Otherwise set $Y_0(\sigma) = 0$.
\end{enumerate}

Note that $Y_0$ is computable from $Z$, so it must exist in our $\omega$-model of $\text{RCA}_0$.
Given the fact that $Z \in [S_{T_F}]$, it is straightforward to check that $Y_0$ represents a tree, and that this tree is a subtree of $T_F$. Additionally, if $Z$ represents a pruned tree (i.e. if $Z \in [\tilde{S}_{T_F}]$), then $Y_0$ will represent a pruned tree as well. Therefore, $Y_0$ will be an element of either $[S_{T_F}]$ or $[\tilde{S}_{T_F}]$ as needed.

Observe that by step (2) of our definition above, we will have at most $K$ length-$m$ strings $\sigma$ for which $Y_0(\sigma) = 1$, since each of the $K$-many length-$n'$ strings represented in $Z$ will either have no extensions represented in $Y_0$, or exactly one. If we take $V$ to consist of exactly these extensions, then $W_s(V) \leq K \cdot 2^{-sm} < c + \epsilon$ by our choice of $m$.  Thus, any initial segment $\nu \subseteq Y_0$ for which $\nu \supseteq Y_0^{\leq m}$ will satisfy $\tilde{H}^s_n(\nu) < c + \epsilon$, and thus we have $H^s_n(Y_0) < c + \epsilon$ when we pass to the limit. Notice also that $Y_0 \supseteq \tau = Z^{\leq n'}$, since $Y_0$ agrees with $Z$ for all $\sigma$ of length at most $n'$.

Suppose that $Y_0$ agrees with $Z$ for all strings $\sigma$ of length $m$. Then by step (3) of our definition, $Y_0$ must also agree with $Z$ for all strings of length greater than $m$. This implies that $\tilde{H}^s_n(Y_0^{\leq k}) = \tilde{H}^s_n(Z^{\leq k})$ for all $k \geq m$, and thus $H^s_n(Y_0) \geq c$ since $Z \in \mathcal{S}^c_F$. 
Therefore, $Y_0$ itself serves as our witnessing element $X \in \mathcal{S}^c_F$ with $H^s_n(X) < c + \epsilon,$ and we are done. 

Otherwise, there must be at least one length-$m$ string $\sigma$ for which $Y_0(\sigma) = 0$ but $Z(\sigma) = 1$. Let $\sigma_1, \dots , \sigma_t$ enumerate all of the length-$m$ strings with this property in lexicographic order. Inductively define elements $Y_1, \dots, Y_t$ in $2^{\omega}$ as follows:
\begin{enumerate}[(a)]
\item If $Y_{i-1}(\sigma) = 1$, set $Y_i(\sigma) = 1$.
\item If $Y_{i - 1}(\sigma) = 0$, set $Y_i(\sigma) = 1$ if $\sigma$ is compatible with $\sigma_i$ and $Z(\sigma) = 1$. Otherwise, set $Y_{i}(\sigma) = 0$.
\end{enumerate}

Again, one can check that these do indeed code subtrees of $T_F$ (pruned subtrees if necessary), that they each extend the string $\tau$, and that they must exist in any $\omega$-model of $\text{RCA}_0$. (In particular, each $Y_i$ is computable from $Z$ and $Y_{i-1}$.) 

Note also that for all strings $\sigma$ of length at least $m$, we will have $Y_t(\sigma) = Z(\sigma)$. This is because any such string $\sigma$ for which $Z(\sigma) = 1$ must either satisfy $Y_0(\sigma) = 1$, or else must extend one of $\sigma_1, \dots , \sigma_t$. Thus, we will have $\tilde{H}^s_n(Z^{\leq k}) = \tilde{H}^s_n(Y_t^{\leq k})$ for all $k \geq m$, implying that $H^s_n(Y_t) \geq c.$

We now claim that some $Y_i$ among $Y_0, Y_1, \dots, Y_t$ must satisfy $c \leq H^s_n(Y_i) < c + \epsilon.$ For consider any $k \geq m$, and consider the values of 
\[
\tilde{H}^s_n(Y_0^{\leq k}), \tilde{H}^s_n(Y_1 ^{\leq k}), \dots , \tilde{H}^s_n(Y_t ^{\leq k}).
\]

We already know that $\tilde{H}^s_n(Y_0 ^{\leq k}) < c + \epsilon$ and $\tilde{H}^s_n(Y_t ^{\leq k}) \geq c$ by previous remarks. Additionally, we observe that each $\tilde{H}^s_n(Y_i ^{\leq k})$ must satisfy 
\[
\tilde{H}^s_n(Y_{i-1} ^{\leq k}) \leq \tilde{H}^s_n(Y_i ^{\leq k}) < \tilde{H}^s_n(Y_{i-1} ^{\leq k}) + \epsilon.
\]

The first inequality follows from the fact that  $Y_{i-1}(\sigma) = 1$ implies $Y_{i}(\sigma) = 1$ for all $\sigma \in 2^{<\omega}$. Thus any valid cover for computing $\tilde{H}^s_n(Y_i ^{\leq k})$ is also valid for computing $\tilde{H}^s_n(Y_{i-1} ^{\leq k})$. For the second inequality, we observe that if $V$ is the minimal cover whose weight realizes $\tilde{H}^s_n(Y_{i-1} ^{\leq k})$, then adding $\sigma_i$ to this cover gives one which is valid when computing $\tilde{H}^s_n(Y_{i}^{\leq k})$, since the only strings $\sigma$ which differ between $Y_i$ and $Y_{i-1}$ are those compatible with $\sigma_i$. (Note also that $|\sigma_i| = m$ is in the appropriate range of $n$ to $k$.) By considering the weight of this new cover, we deduce that 

$$\tilde{H}^s_n(Y_{i} ^{\leq k}) \leq \tilde{H}^s_n(Y_{i-1} ^{\leq k}) + 2^{-sm} < \tilde{H}^s_n(Y_{i-1} ^{\leq k}) + \epsilon$$

by our choice of $m$.

It follows that as we enumerate through the values 
\[
\tilde{H}^s_n(Y_0 ^{\leq k}), \tilde{H}^s_n(Y_1 ^{\leq k}), \dots , \tilde{H}^s_n(Y_t ^{\leq k})
\]
we must at some point land in the range $[c, c + \epsilon)$, since we start at a value less than $c + \epsilon$, we end at a value at least $c$, and we cannot ``jump'' by more than $\epsilon$. Let $Y^{(k)}$ denote the first element $Y_i$ for which $\tilde{H}^s_n(Y_i ^{\leq k}) \in [c, c+\epsilon)$.
If we apply this analysis to all $k \geq m$, we conclude that some $Y_i$ must appear as $Y^{(k)}$ for infinitely many $k$ by the infinite pigeonhole principle, since we only have finitely many options for the identity of this element. 

For this choice of $Y_i$, we  certainly have $H^s_n(Y_i) < c + \epsilon$; any $k \geq m$ for which $Y^{(k)} = Y_i$ will work as a witness for $\tilde{H}^s_n(Y_i ^{\leq k}) < c + \epsilon$, at which point we can apply monotonicity and pass to the limit. But since we can find arbitrarily large values of $k$ for which $\tilde{H}^s_n(Y_i ^{\leq k}) \geq c$, it follows by monotonicity again that we must in fact have $\tilde{H}^s_n(\nu) \geq c$ for all $\nu \subseteq Y_i$, and thus $H^s_n(Y_i) \geq c$. This proves that $Y_i$ is our witnessing element of $\mathcal{S}_F^c$ (or $\tilde{\mathcal{S}}_F^c$), as desired.
\end{proof}

From the lemma above, we deduce that if we start with a closed set $F$ and we wish to find a subset $E$ of an approximate measure, then the construction can be carried out computably as long as $F$ and $E$ are of the same type (either both standard closed or both pruned closed).

\begin{proposition}
    The following holds in all $\omega$-models of $\RCA_0$: If $F \subseteq \Cant$ is a nontrivial standard closed (respectively, pruned closed) set with tree code $Z_F$, and $c$ is such that $0 \leq c \leq H^s_n(Z_F)$, then for any $\epsilon > 0$, there exists a nontrivial standard closed (respectively, pruned closed) set $E \subseteq F$ whose associated tree code $Z_E$ satisfies $c \leq H^s_n(Z_E) < c + \epsilon$.
\end{proposition}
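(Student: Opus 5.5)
The plan is to reduce the proposition to Lemma~\ref{H^s_n-inf} by checking that $Z_F$ itself lies in $\mathcal{S}^c_F$ (respectively $\tilde{\mathcal{S}}^c_F$). We then use the infimum clause of that lemma to obtain the desired tree code.

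\emph{Step 1.} Work in an $\omega$-model of $\RCA_0$ containing $Z_F$ and $c$. Recall that the hypothesis $c \leq H^s_n(Z_F)$ is read, per the convention following Definition~\ref{def:monotone_limit}, as $\tilde{H}^s_n(Z_F\upharpoonright_k) \geq c$ for all $k$. Since $Z_F$ codes $T_F$ itself, every $\sigma$ with $Z_F(\sigma)=1$ lies in $T_F$, and conditions (1) and (2) of Definition~\ref{ST} hold. Hence $Z_F \in [S_{T_F}]$. In the pruned case, $T_F$ is pruned, so condition (3) holds as well and $Z_F \in [\tilde{S}_{T_F}]$. Lemma~\ref{lem:H^s_n_c} then gives $Z_F \in \mathcal{S}^c_F$ (respectively $\tilde{\mathcal{S}}^c_F$), so this set is nonempty.

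\emph{Step 2.} Apply Lemma~\ref{H^s_n-inf}. Because $c$ is an infimum of $H^s_n$ over $\mathcal{S}^c_F$, condition (2) of Definition~\ref{def:inf} yields, for the given $\epsilon>0$, some $Z_E \in \mathcal{S}^c_F$ with $H^s_n(Z_E) < c+\epsilon$. Membership in $\mathcal{S}^c_F$ gives $H^s_n(Z_E)\geq c$ by Lemma~\ref{lem:H^s_n_c}. Since $Z_E \in [S_{T_F}]$ (respectively $[\tilde S_{T_F}]$), it codes a subtree of $T_F$, pruned when required. The associated closed set $E$ is therefore a standard (respectively pruned) closed subset of $F$. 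Alternatively, one can take $Z_E$ directly as the element constructed in the proof of Lemma~\ref{H^s_n-inf}, starting from $Z=Z_F$ and $\tau=\langle\rangle$. That proof produces some $Y_i$, computable from $Z_F$, with $c\le H^s_n(Y_i)<c+\epsilon$.

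\emph{Step 3 (nontriviality).} We must check that the tree coded by $Z_E$ is infinite. This is the only step not handled by the lemma, and the case $c=0$ is the one that needs care. If $c>0$, suppose the tree were finite. Then for large $k$ there are no strings of length $k$ with $Z_E(\sigma)=1$, so $V=\emptyset$ is admissible in Case 1 of Definition~\ref{def:H-tilde}. This gives $\tilde{H}^s_n(Z_E^{\leq k})=0<c$, a contradiction. If $c=0$, I would run the argument with $c'=\min(\epsilon/2, H^s_n(Z_F))$ in place of $c$. If $H^s_n(Z_F)>0$, this yields a nontrivial $E$ with $0<c'\le H^s_n(Z_E)<c'+\epsilon/2\le\epsilon$. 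Comparing $\tilde{H}^s_n(Z_F\upharpoonright_k)$ with a rational exceeds the model's means, so rather than deciding whether $H^s_n(Z_F)>0$, I would avoid the case split by choosing $c'$ through a rational approximation.

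If $H^s_n(Z_F)=0$, I would simply take $E=F$, since $Z_F$ is nontrivial by hypothesis. If this case distinction proves problematic, the fallback is to note that the construction of $Y_0$ in Lemma~\ref{H^s_n-inf} keeps exactly one extension of each surviving node. It therefore preserves infiniteness whenever $T_F$ has surviving length-$m$ nodes, which is what I expect the intended reading of ``nontrivial'' to require.
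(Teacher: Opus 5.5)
Your proposal is correct and is essentially the paper's proof. $Z_F$ witnesses that $\mathcal{S}^c_F$ (resp.\ $\tilde{\mathcal{S}}^c_F$) is nonempty, and the infimum clause of Lemma~\ref{H^s_n-inf} supplies $Z_E$. Nontriviality comes from running this with a strictly positive target: $c$ itself, or a small positive $c'$ when $c=0$, with $E=F$ when $H^s_n(Z_F)=0$. This matches the paper's use of its auxiliary $d$. Choosing $c'$ rational is in fact slightly more careful than the paper's $d=\frac{1}{2}\min\{H^s_n(Z_F),\epsilon\}$, which need not be a real in the model.

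Two minor remarks. First, the case split on whether $H^s_n(Z_F)=0$ is made in the metatheory, so it causes no difficulty. Second, drop your fallback about $Y_0$: for a non-pruned tree, the retained leftmost length-$m$ extension can be a dead end, so $Y_0$ need not code an infinite tree.
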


\begin{proof}
If $H^s_n(Z_F) = 0$, then the result is immediate; we can only have $c = 0$ so we can just take $E = F$ regardless of the value of $\epsilon$. So assume that $H^s_n(Z_F) > 0$, and let $c$ be our given target measure satisfying $0 \leq c \leq H^s_n(Z_F).$ Let $\epsilon > 0$ be arbitrary.

Choose a real number $d > 0$ such that $d \geq c$, but such that we also have $d \leq H^s_n(Z_F)$ and $d < c + \epsilon$. If $c$ itself is positive, then we can just take $d = c$; otherwise, we can (for example) take $d = \frac{1}{2}\min\{H^s_n(Z_F), \epsilon\}.$ Since the tree code $Z_F$ satisfies $H^s_n(Z_F) \geq d$ for this choice of $d$, we know that $\mathcal{S}_F^d$ will be nonempty since it will at least contain the element $Z_F$. If $F$ happens to be pruned closed, then $\tilde{\mathcal{S}}_F^d$ will be nonempty for the same reason. Choose $\delta > 0$ sufficiently small that $d + \delta \leq c + \epsilon$.
By applying Lemma \ref{H^s_n-inf} and using the definition of infimum, we deduce that there exists an element $Z_E \in \mathcal{S}^d_F$ (or $\mathcal{S}^d_F$) which satisfies $H^s_n(Z_E) < d + \delta \leq c + \epsilon$. As $H^s_n(Z_E) \geq d \geq c$, and $Z_E$ must code an infinite tree since $d$ is positive, we have our desired result.
\end{proof}

If $F$ is standard closed but we want the subset $E$ to be pruned closed, then the analog to the above result is equivalent to $\WKL_0$ instead.

\begin{proposition} \label{WKL-reg}
    The following is $\omega$-equivalent to $\WKL_0$: 
    If $F \subseteq \Cant$ is a nontrivial standard closed set with tree code $Z_F$, and $c$ is such that $0 \leq c \leq H^s_n(Z)$, then for any $\epsilon > 0$, there exists a nontrivial pruned closed set $E \subseteq F$ whose associated tree code $Z_E$ satisfies $c \leq H^s_n(Z_E) < c + \epsilon$.
\end{proposition}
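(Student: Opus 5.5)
We will prove both directions, working in $\omega$-models throughout.

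\emph{$\WKL_0$ implies the statement.} Fix $F=[T_F]$, $Z_F$, $c$ and $\epsilon$. First we apply the previous proposition to obtain a nontrivial standard closed $E'\subseteq F$ with tree code $Z_{E'}$ and $c\le H^s_n(Z_{E'})<c+\epsilon$. It then remains to replace $Z_{E'}$ by a pruned tree representing the same closed set without changing the value of $H^s_n$.

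Since $\WKL_0$ does not give the pruned subtree $\tilde T$ from Theorem~\ref{ACA-T}(i) directly, we argue as follows. The set $E'$ is nonempty because its tree is infinite and $\WKL_0$ holds. In an $\omega$-model of $\WKL_0$ (which by Simpson is the $\omega$-model of a Scott set), $\WKL_0$ proves that a nonempty closed set $[T]$ is pruned closed: we take a path $X$ through $T$, and a pruned subtree is obtained by letting $P$ be the tree of the $T$-extendible strings. That tree is not computable, however, so it is better to use the standard fact (Simpson IV.1) that $\WKL_0$ proves that every nonempty closed set in $\Cant$ is separably closed and located. This fails for $\Pi^0_1$ classes in general, which is a concern.

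Because of this, we change tactics and instead build a pruned subtree directly with small measure. Inside the model, take $X\in[T_{E'}]$ via $\WKL$. Choose $m$ large as in Lemma~\ref{H^s_n-inf}. For each length-$m$ node of $T_{E'}$ that is extendible, the model contains a path, but deciding extendibility is $\Pi^0_1$. Still, $\WKL$ provides a single path $Y$ through the tree of sequences $\langle X_\sigma\rangle$ with the following property: for each length-$m$ string $\sigma$, either $X_\sigma$ extends $\sigma$ inside $T_{E'}$, or ($\sigma$ has no extension and) $X_\sigma$ is a default path. Since there are only finitely many $\sigma$, this is a single application of $\WKL$ to a finite product of trees together with a finite case guess. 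In an $\omega$-model, the finitely many extendible $\sigma$ can be fixed nonuniformly.

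The pruned tree $P$ consisting of initial segments of the finitely many paths $X_\sigma$ (for extendible $\sigma$) is computable from these paths, so $P$ is in the model. Its closed set is finite, so $H^s_n$ is computed from at most $K$ branches; this controls only the upper bound $<c+\epsilon$, and $P$ could have measure far below $c$. To recover the lower bound, we run the interpolation argument of Lemma~\ref{H^s_n-inf} inside the pruned space $\tilde{\mathcal S}^c_F$. That space is nonempty once we exhibit a single pruned subtree of measure $\ge c$, and Lemma~\ref{H^s_n-inf} then yields the required element directly. So the real task reduces to finding \emph{some} pruned tree $P\subseteq T_F$ with $H^s_n(P)\ge c$. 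For this we use $\WKL$ on the tree $\tilde S_{T_F}$ intersected with the $\Pi^0_1$ condition defining $\tilde{\mathcal S}^c_F$. This is exactly the tree generating $\tilde{\mathcal S}^c_F$; it is computable from $Z_F$ and $c$, and it is infinite, which we show by a compactness argument comparing it with $Z_F$ at each finite level. Indeed, at each finite level $k$, the extendible part of $T_F$ up to level $k$ gives a string in $\tilde S_{T_F}$ whose $\tilde H^s_n$-values bound those of $Z_F$ from below at smaller levels. \textbf{This is the main obstacle}: we must verify carefully that pruning to a finite level does not lower $\tilde H$ below $c$ on those levels. Once infinitude is established, $\WKL$ gives a path, and Lemma~\ref{H^s_n-inf} finishes this direction.

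\emph{The statement implies $\WKL_0$.} Given an infinite tree $T$, apply the statement with $s=1$ (or any $s$), $c=0$ and any $\epsilon$; the corresponding hypothesis holds. A nontrivial pruned tree $E\subseteq[T]$ yields a path by leftmost-branch primitive recursion, as in Proposition~\ref{RCA-P}(ii), so $[T]$ has a member in the model.
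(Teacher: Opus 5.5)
\paragraph{A gap in the key step.} Your eventual strategy for the forward direction matches the paper's: show that the computable tree generating $\tilde{\mathcal S}^c_F$ is infinite, take a path by $\WKL$, and finish with Lemma~\ref{H^s_n-inf}. Your reversal also matches. The gap is that the one step carrying the content, the infinitude of that tree, is labelled ``the main obstacle'' and never established. Moreover, the justification you sketch points the wrong way. At a level $j<k$, the string obtained by pruning $T_F$ at level $k$ marks only a subset of the level-$j$ strings marked by $Z_F$. Its $\tilde H^s_n$-values there are therefore \emph{at most} those of $Z_F$, which gives no lower bound by $c$.

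\paragraph{The missing observation.} By condition (b) of Definition~\ref{def:H-tilde}, $\tilde H^s_n(\nu)$ depends only on which strings of the top length $m$ are marked by $\nu$. So let $\nu^k$ (of length $2^{k+1}-1$) mark exactly those $\sigma$ with $Z_F(\sigma)=1$ that have an extension of length $k$ in $T_F$. Then:
\begin{itemize}
\item $\nu^k\in\tilde S_{T_F}$;
\item pruning removes nothing at level $k$, so $\tilde H^s_n(\nu^k)=\tilde H^s_n(Z_F^{\le k})\ge c$;
\item monotonicity of $\tilde H^s_n$ gives the same bound on every prefix of $\nu^k$, hence $\ge_m c$ for every $m$.
\end{itemize}
Thus every $\nu^k$ lies in the tree. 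The paper reaches $\tilde H^s_n(\nu^k)\ge c$ by a detour through Proposition~\ref{prop:WKL-meas1}: an optimal cover for $\nu^k$ is a $2^{-n}$-cover of $F$, so some prefix of $Z_F$ has $\tilde H^s_n$-value at most its weight. In fact no compactness is needed at this step; $\WKL$ is used only to extract the path.

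\paragraph{Other corrections.} The earlier paragraphs of your forward direction should be deleted, not just abandoned, because they assert false things.
\begin{itemize}
\item $\WKL_0$ does not prove that every nonempty standard closed set is pruned closed, nor that it is separably closed. By Theorem~\ref{ACA-T}, each of these is equivalent to $\ACA_0$. So your first plan, shrinking via the previous proposition and then re-presenting $E'$ by a pruned tree, cannot be carried out in $\WKL_0$.
\item The tree built from finitely many paths codes a finite set, whose $H^s_n$-value is $0$ for $s>0$, so it cannot help with the lower bound.
\end{itemize}
Finally, a small point that the paper also passes over. When $c=0$, the path supplied by $\WKL$ may code the empty tree. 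To guarantee that $E$ is nontrivial, apply $\WKL$ to the part of the tree extending a string that marks the root (every $\nu^k$ does). Then use the density clause of Lemma~\ref{H^s_n-inf} above that string: the resulting pruned tree contains the root and is therefore infinite.
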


\begin{proof}
To show that $\WKL_0$ is sufficient to prove the statement, we note that since $H^s_n(Z_F) \geq c$, the tree corresponding to $\tilde{\mathcal{S}}^c_F$ must  be infinite. Indeed, consider the initial segment $Z_F^{\leq k}$ for any $k \in \omega$, and define a string $\nu^k$ of the same length by setting $\nu^k(\sigma) = 1$ if and only if $Z_F^{\leq k}(\sigma) = 1$, and for all lengths $\ell$ with $|\sigma| < \ell \leq k$ there exists at least one $\tau \supseteq \sigma$ with $|\tau| = \ell$ and $Z_F^{\leq k}(\tau) = 1$. Note that this string $\nu^k$ will meet the conditions for being in $\tilde{S}_{T_F}.$

We also claim that $\tilde{H}^s_n(\nu^k) \geq c$. If we take a cover $V \subseteq 2^{<\omega}$ whose weight realizes $\tilde{H}^s_n(\nu^k)$, then the corresponding union of cylinders would form a $2^{-n}$-cover for the set $F$. (Note that every infinite sequence $X \in F$ will have its length-$k$ initial segment $\sigma$ satisfying $Z^{\leq k}_F(\sigma) = \nu^k(\sigma) = 1$.) By Proposition \ref{prop:WKL-meas1} (which is provable in $\WKL_0$), there must be some initial segment $\nu' \subseteq Z_F$ for which  

$$ \tilde{H}^s_n(\nu') \leq \sum_{\sigma \in V}2^{-s|\sigma|} = W_s(V) = \tilde{H}^s_n(\nu^k).$$

Thus, if $\tilde{H}^s_n(\nu^k)$ were strictly less than $c$, we would contradict the assumption that $H^s_n(Z_F) \geq c$.

Therefore, each of the infinitely many sequences $\nu^k$ will be in the tree for $\tilde{\mathcal{S}}^c_F$. By $\WKL_0$, this tree must have at least one infinite path, so Lemma~\ref{H^s_n-inf} applies and allows us to find a code $Z_E$ for a nontrivial pruned closed set $E \subseteq F$ which satisfies $c \leq H^s_n(Z_E) < c + \epsilon,$ for any $\epsilon > 0$.

For the reversal, consider any infinite tree $T$, and let $Z \in 2^{\omega}$ and $F = [T]$ be the corresponding code and closed set respectively. Note that $F$ is nontrivial since $T$ is infinite. Choose any $c \geq 0$ such that $0 \leq c \leq H^s_n(Z_F)$, any $\epsilon > 0$, and apply the statement to choose a nontrivial pruned closed $E \subseteq F$ whose code $Z_E$ satisfies $c \leq H^s_n(Z_E) < c + \epsilon$. As $E$ is represented by an infinite pruned tree, we can just take its leftmost path $X$, which can be defined in a computable way. Since $E \subseteq F = [T]$, we have $X \in [T]$ which proves $\WKL_0$.
\end{proof}

In fact, we can actually obtain an equivalence with $\WKL_0$ just by requiring the existence of a pruned closed subset with exactly the same measure as the original, under the constraint that this original measure is actually defined in our model.

\begin{corollary} \label{WKL-exists}
    The following is $\omega$-equivalent to $\WKL_0$: 
    If $F \subseteq \Cant$ is a nontrivial standard closed set with tree code $Z_F$, and there exists a real number $d$ such that $H^s_n(Z_F) = d$, then there exists a nontrivial pruned closed set $E \subseteq F$ whose associated tree code $Z_E$ satisfies $H^s_n(Z_E) = H^s_n(Z_F) = d$.
\end{corollary}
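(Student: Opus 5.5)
The proof splits into the forward direction, which works inside an $\omega$-model of $\WKL_0$, and a reversal that reuses the argument of Proposition~\ref{WKL-reg}.

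\textbf{Forward direction.} Assume we are in an $\omega$-model of $\WKL_0$ and $H^s_n(Z_F)=d$. If $d=0$ we run the argument below unchanged. Consider the tree whose paths form $\tilde{\mathcal{S}}^d_F$. The argument in the proof of Proposition~\ref{WKL-reg} applies with $c=d$: the strings $\nu^k$ obtained by pruning $Z_F^{\leq k}$ lie in this tree, and this uses Proposition~\ref{prop:WKL-meas1}. Hence the tree is infinite, and $\WKL_0$ gives a path $Y$. Then $Y$ codes a pruned subtree $T_Y\subseteq T_F$, and $H^s_n(Y)\geq d$ by Lemma~\ref{lem:H^s_n_c}.

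Nontriviality needs a check. If $d>0$, every cover considered in $\tilde H^s_n(Y\restriction_k)$ has positive weight, so the tree coded by $Y$ has nodes at every level and $E=[T_Y]$ is nontrivial. If $d=0$, we instead take $Y$ to be a path through $\tilde{S}_{T_F}$ with nonempty root. Such a path exists by $\WKL_0$: it is the pruned tree of $F$, and it is even computable relative to a path of $T_F$. Its measure is $\geq 0$ trivially.

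It remains to show $H^s_n(Y)\leq d$. Since $T_Y\subseteq T_F$, any cover $V$ admissible for $Z_F^{\leq k}$ is also admissible for $Y^{\leq k}$. Therefore $\tilde H^s_n(Y^{\leq k})\leq \tilde H^s_n(Z_F^{\leq k})$ for every $k$, and taking limits gives $H^s_n(Y)\leq d$. The two monotone sequences are squeezed between $d$ and $\tilde H^s_n(Z_F^{\leq k})\to d$, so the limit $H^s_n(Y)$ exists in the model and equals $d$. Note that no $\epsilon$-approximation through Lemma~\ref{H^s_n-inf} is needed: the pruned subtree already has exactly the right measure.

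\textbf{Reversal.} Let $T$ be an infinite tree with no path in the model; we aim for a contradiction. Set $F=[T]$ and pick $s$, $n$ so that $H^s_n(Z_F)$ exists as a real in the model. A simple choice is $s=0$, where $\tilde H^0_n$ counts the minimal number of cylinders. Because the values lie in a discrete set and decrease monotonically, they are eventually constant, so the limit exists as a real even in $\RCA_0$. Moreover it is $\geq 1$ whenever $T$ is infinite. If $s>0$ is required, then for $s=1$ the values are dyadic rationals. I would look for a tree-modification that makes the limit computable, for example by padding $T$ so that its measure is computably known. This step is the main technical obstacle.

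Given existence of the measure, apply the statement to obtain a nontrivial pruned closed $E\subseteq F$. Nontrivial means the pruned tree of $E$ is infinite, so its leftmost path is computable from $Z_E$ and lies in $[T]$. This gives $\WKL_0$ exactly as in the reversal of Proposition~\ref{WKL-reg}. The crux is therefore choosing parameters for which $H^s_n(Z_F)$ provably exists; the integer-valued case $s=0$ is what I would try first.
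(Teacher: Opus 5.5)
Your forward direction is correct, and it takes a shorter route than the paper's. The paper applies Proposition~\ref{WKL-reg} with $c=d$, which goes through Lemma~\ref{H^s_n-inf} to get $d\le H^s_n(Z_E)<d+\epsilon$. It then gets the upper bound by passing through $\mathcal{H}^s_{2^{-n}}(F)=d$ via Corollary~\ref{WKL-meas}, and transfers near-optimal covers of $F$ to $E$ using Proposition~\ref{prop:WKL-meas1}. You instead take any path $Y$ of the tree for $\tilde{\mathcal{S}}^d_F$, which is infinite by the first half of the proof of Proposition~\ref{WKL-reg}. You then observe that $T_Y\subseteq T_F$ gives $\tilde H^s_n(Y^{\leq k})\le\tilde H^s_n(Z_F^{\leq k})$ level by level, so the squeeze is immediate. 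This avoids both the $\epsilon$-approximation lemma and the identification with $\mathcal{H}^s_{2^{-n}}$.

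There is one slip in your $d=0$ case. The pruned tree of $F$ is not available in $\WKL_0$ (it needs $\ACA_0$ by Theorem~\ref{ACA-T}), and it is not computable from a single path. What you want is the code of the singleton $\{X\}$ for a path $X\in[T_F]$. Its measure is squeezed between $0$ and $H^s_n(Z_F)=0$ by the same level-by-level comparison.

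The reversal has a genuine gap, in exactly the place you flagged. In this section $s>0$ and $n$ are fixed, so switching to $s=0$ proves the reversal for a different principle. For $s>0$ you give no way to meet the hypothesis that $H^s_n(Z_F)$ is a real of the model. The paper's own reversal only says it is ``the same as for the previous proposition,'' which applies the statement to an arbitrary infinite tree and ignores this hypothesis, so your concern is legitimate.

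The missing idea is not to compute the measure of $[T]$ but to make it zero.
\begin{itemize}
\item Fix a rational $0<q\le s$ and positions $\ell_0<\ell_1<\cdots$ with $\ell_k\ge\max(n,2k/q)$.
\item Let $T'$ consist of the strings that are $0$ off $\{\ell_i\}$ and whose bits at the positions $\ell_i$ below their length form a string of $T$.
\item Then $T'$ is a tree computable from $T$, it is infinite, and its paths are uniformly interdefinable with those of $T$.
\item $T'$ has at most $2^k$ nodes of length $\ell_k$. Using them as the cover gives $\tilde H^s_n(Z_{T'}^{\leq \ell_k})\le 2^k2^{-s\ell_k}\le 2^{-k}$.
\item Hence $H^s_n(Z_{T'})=0$, so $d=0$ witnesses the hypothesis in every $\omega$-model containing $T$.
\end{itemize}
The statement then yields a nontrivial pruned $E\subseteq[T']$. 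Its leftmost path is computable from $Z_E$ and decodes to a path through $T$.
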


\begin{proof}
    If $\WKL_0$ holds, then we can choose any $\epsilon > 0$ and apply the previous corollary to $c = d$ to obtain a nontrivial pruned closed set $E \subseteq F$ whose code $Z_E$ satisfies $d \leq H^s_n(Z_E) <d + \epsilon.$

     We can also use the equivalence of $H^s_n$ and $\mathcal{H}^s_{2^{-n}}$ in $\WKL_0$ (see Corollary \ref{WKL-meas}) to conclude that $\mathcal{H}^s_{2^{-n}}(F) = H^s_n(Z_F) = d$. This means that given any $\delta > 0$, there is some valid $2^{-n}$-cover of $F$ with weight less than $d + \delta$. As $E$ is a subset of $F$, this cover would be valid for $E$ as well, so we can conclude that $H^s_n(Z_E) < d + \delta$ as a consequence of Proposition \ref{prop:WKL-meas1}. As $\delta > 0$ was arbitrary, we can combine this observation with the previous inequality of $H^s_n(Z_F) \geq d$ to conclude that $H^s_n(Z_E) = d$ as desired.

The reversal is the same as for the previous proposition, again making use of the fact that we can computably find an infinite path through any infinite pruned tree.
\end{proof}

We now turn our attention to the assertion that we can obtain a closed set with an \textit{exact} positive measure, although unlike the previous corollary, this measure may be strictly less than that of the original set. We again break this down into cases depending on the representations of the sets involved. Consider the following statements:

\begin{itemize}[leftmargin=5em]
    \item[($\mathsf{Reg}$-S)] {\em If $F \subseteq \Cant$ is a nontrivial standard closed set with tree code $Z_F$, and $c$ is such that $0 \leq c \leq H^s_n(Z_F)$, then there exists a nontrivial standard closed set $E \subseteq F$ whose associated tree code $Z_E$ satisfies $H^s_n(Z_E) = c$.}
    \item[($\mathsf{Reg}$-SP)] {\em If $F \subseteq \Cant$ is a nontrivial standard closed set with tree code $Z_F$, and $c$ is such that $0 \leq c \leq H^s_n(Z_F)$, then there exists a nontrivial pruned closed set $E \subseteq F$ whose associated tree code $Z_E$ satisfies $H^s_n(Z_E) = c$.}
    \item[($\mathsf{Reg}$-P)] {\em If $F \subseteq \Cant$ is a nontrivial pruned closed set with tree code $Z_F$, and $c$ is such that $0 \leq c \leq H^s_n(Z_F)$, then there exists a nontrivial pruned closed set $E \subseteq F$ whose associated tree code $Z_E$ satisfies $H^s_n(Z_E) = c$.} 
    \item[($\mathsf{Reg}$-PS)] {\em If $F \subseteq \Cant$ is a nontrivial pruned closed set with tree code $Z_F$, and $c$ is such that $0 \leq c \leq H^s_n(Z_F)$, then there exists a nontrivial standard closed set $E \subseteq F$ whose associated tree code $Z_E$ satisfies $H^s_n(Z_E) = c$.}
\end{itemize}

We first verify that $\ACA_0$ is sufficient to prove all of these statements for $\omega$-models. 

\begin{proposition} Each of $\mathsf{Reg}\text{-}\mathrm{S}$, $\mathsf{Reg}\text{-}\mathrm{SP}$, $\mathsf{Reg}\text{-}\mathrm{P}$, and $\mathsf{Reg}\text{-}\mathrm{PS}$ is provable in $\ACA_0.$
\end{proposition}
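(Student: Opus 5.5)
The plan is to combine Lemma~\ref{H^s_n-inf} with the Dense Monotone Minimum principle, which is provable in $\ACA_0$ by Corollary~\ref{cor:DMMin_ACA}. We work inside $\ACA_0$, where $H^s_n$ is fully formalized and $\WKL_0$ is available. Fix $F$ with tree code $Z_F$ and $c$ with $0 \leq c \leq H^s_n(Z_F)$. The case $c=0$ needs separate care, since "nontrivial" only requires an infinite tree: there we take a single path $X$ of a pruned subtree (which exists by Theorem~\ref{ACA-T}), or any infinite tree of a single branch inside $F$, and check directly that $H^s_n$ of its code is $0$. Indeed, covering by one cylinder of length $m\to\infty$ gives weight $2^{-sm}\to 0$ when $s>0$. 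So assume $c>0$.

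\textbf{Step 1 (the hypotheses of $\DMM$).} For $\mathsf{Reg}$-S and $\mathsf{Reg}$-PS we use the closed set $\mathcal{S}^c_F$, coded by the tree from Definition~\ref{S^c_F}. For $\mathsf{Reg}$-SP and $\mathsf{Reg}$-P we use $\tilde{\mathcal{S}}^c_F$. In every case we must show this set is nonempty.
\begin{itemize}
\item For $\mathcal{S}^c_F$, the point $Z_F$ itself lies in it, by Lemma~\ref{lem:H^s_n_c}.
\item For $\tilde{\mathcal{S}}^c_F$ with $F$ pruned closed, $Z_F \in [\tilde{S}_{T_F}]$, so again $Z_F$ is a member.
\item For $\mathsf{Reg}$-SP, we run the argument from the proof of Proposition~\ref{WKL-reg}. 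It shows the tree of $\tilde{\mathcal{S}}^c_F$ is infinite, so $\WKL_0$ yields a path. Alternatively, we replace $T_F$ by its pruned version $\tilde T_F$ via Theorem~\ref{ACA-T}; this represents the same closed set, so by Corollary~\ref{WKL-meas} it has the same $H^s_n$ value, and it then lies in $\tilde{\mathcal{S}}^c_F$.
\end{itemize}
Once nonemptiness holds, Lemma~\ref{H^s_n-inf} (valid in $\omega$-models of $\RCA_0$, and its proof formalizes in $\ACA_0$) tells us two things: $c$ is an infimum of $H^s_n$ over the set, and $H^s_n$ is dense above $c$ there.

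\textbf{Step 2 (apply $\DMM$).} The function $\tilde H^s_n$, restricted to the tree coding $\mathcal{S}^c_F$ (resp.\ $\tilde{\mathcal{S}}^c_F$), is monotone decreasing. Applying $\DMM$ then gives a $Z_E$ in the set with $H^s_n(Z_E) = c$. This $Z_E$ codes a subtree of $T_F$, pruned in the tilde case, so $E\subseteq F$. Since $c>0$, Case 2 of Definition~\ref{def:H-tilde} together with the positivity argument from Proposition~\ref{prop:WKL-meas1} shows that the coded tree is infinite. Hence $E$ is nontrivial.

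\textbf{Main obstacle.} The hard step is checking that Lemma~\ref{H^s_n-inf} formalizes in $\ACA_0$ rather than holding only in $\omega$-models. Its proof uses the infinite pigeonhole principle over the finitely many $Y_0,\dots,Y_t$. That instance is bounded by a fixed finite $t$, so it follows from $\Sigma^0_2$-bounding, which is available in $\ACA_0$. Alternatively, the proof could select $Y_i$ directly as the first $i$ with $H^s_n(Y_i)\geq c$; this condition is arithmetical, and the values $H^s_n(Y_i)$ exist in $\ACA_0$. We can then verify $H^s_n(Y_i)<c+\epsilon$ from $H^s_n(Y_{i-1})<c$ and the jump bound $2^{-sm}$.
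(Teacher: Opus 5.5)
Your proof is correct and follows the paper's route: Lemma~\ref{H^s_n-inf} supplies the infimum and density hypotheses, and $\DMM$ (Corollary~\ref{cor:DMMin_ACA}) then yields a code with $H^s_n = c$. The only structural difference is that the paper collapses all four variants to $\mathsf{Reg}$-S via Theorem~\ref{ACA-T}, whereas you work with $\tilde{\mathcal{S}}^c_F$ directly for the pruned outputs. Your separate treatment of $c = 0$ is a genuine addition, since there $\DMM$ may return the code of a finite tree (e.g.\ the empty one), a nontriviality issue the paper's proof does not address.
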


\begin{proof}
 By Theorem \ref{ACA-T}, standard and pruned closed sets are equivalent over $\text{ACA}_0$. It follows that each of $\mathsf{Reg}$-S, $\mathsf{Reg}$-SP, $\mathsf{Reg}$-P, and $\mathsf{Reg}$-PS are pairwise equivalent under $\ACA_0$. We choose to show that $\mathsf{Reg}$-S is satisfied.

To that end, we use the fact that $\ACA_0$ proves the Dense Monotone Minimum principle ($\DMM$) (Corollary~ \ref{cor:DMMin_ACA}). Let $F$ and $c$ be as in the statement of $\mathsf{Reg}$-S. By Lemma \ref{H^s_n-inf}, the function $H^s_n$ is dense above its infimum $c$ over the nonempty closed set $\mathcal{S}^c_F$. Therefore, by DMMin there must be some $Z \in \mathcal{S}^c_F$ for which $H^s_n(Z) = c$. The corresponding closed set $E_Z \subseteq F \subseteq 2^{\omega}$ serves as our desired witness.
\end{proof}

Obtaining lower bounds for the complexity of these statements is more difficult. 

We will need the well-known fact that a computable tree whose corresponding closed set has computable, positive Lebesgue measure must have a computable path. We state it here adapted to our formalism. For a proof see~\cite{Gruner:2026a}.

\begin{lemma} \label{lem:path-from-comp-meas}
    The following holds in any $\omega$-model of $\RCA_0$: Let $E \subseteq 2^{\omega}$ be a nontrivial standard closed set, and let $Z_E$ be a code for its tree. Suppose there exists a real number $c > 0$ such that $H^1_0(Z_E) = c$. Then there must exist an element $X \in E$. 
\end{lemma}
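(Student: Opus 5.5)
The plan is to build a path of $T_E$, where $T_E$ is the tree coded by $Z_E$, computably from $Z_E$ and $c$ (with a fixed rational approximation of $c$ as a parameter). We extend an initial string $\sigma$ one bit at a time, maintaining the invariant that the part of $E$ below $\sigma$ has measure at least $\tfrac{2}{3}2^{-|\sigma|}$, that is, at least two thirds of the cylinder $N_\sigma$.

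The key point is that for $s=1$ the quantity $\tilde H^1_0(Z_E^{\leq m})$ is just $2^{-m}$ times the number of length-$m$ strings in $T_E$. Taking all of them as the cover is optimal, since splitting a shorter string never increases the $1$-weight. So $H^1_0(Z_E)=\lim_m 2^{-m}\,|T_E\cap 2^m|$ is a decreasing limit. Knowing that this limit equals $c$, together with the approximation $\tilde H^1_0(Z_E^{\leq m})\ge c$, makes the measure computable. Given rational $\epsilon>0$, search for $m$ with $2^{-m}|T_E\cap 2^m| < c+\epsilon$; such an $m$ exists because the limit is $c$. The count is then known to within $\epsilon$. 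The same holds locally: for each $\sigma$, the numbers $a_\sigma(m)=2^{-m}|\{\tau\in T_E\cap 2^m:\tau\supseteq\sigma\}|$ decrease to the local measure $c_\sigma$. Moreover $\sum_{|\sigma|=\ell} a_\sigma(m)$ is the global count, so the error of every local approximation is bounded by the global error. Hence $c_\sigma$ is computable uniformly in $\sigma$, and all of this happens within $\RCA_0$ using $Z_E$ and $c$ as parameters.

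The construction starts by choosing a string $\sigma_0$ whose local measure exceeds $\tfrac23 2^{-|\sigma_0|}$, a Lebesgue density type step. We find it by choosing $\ell$ with $2^{-\ell}$ much smaller than $c$ and searching for $\sigma$ of length $\ell$ whose approximated local measure is close to full. To make this search succeed, I would avoid density altogether: use the simpler invariant $c_\sigma>0$, i.e.\ local measure positive, and at each step pick a child $\sigma^\frown i$ with $c_{\sigma^\frown i}>0$. Since $c_\sigma=c_{\sigma 0}+c_{\sigma 1}$, some child qualifies. We cannot decide positivity exactly, but we can search for a rational $q>0$ and a child with approximated local measure exceeding $q$ plus the certified error. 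This search halts because some child has local measure at least $c_\sigma/2$, and the global error can be made arbitrarily small. The choices are defined by primitive recursion together with minimization over a $\Delta^0_1$ relation, just as in the proof of Proposition~\ref{BCTC-II/III}. An induction then shows that every $\sigma_n$ has positive local measure, so in particular every $\sigma_n$ has extensions in $T_E$ at every length and hence lies in $T_E$. The limit $X$ therefore lies in $[T_E]=E$.

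The main obstacle I expect is the formal bookkeeping: establishing within an $\omega$-model that the local measures $c_\sigma$ exist as reals with the additivity $c_\sigma=c_{\sigma0}+c_{\sigma1}$, and that the error bounds hold. This should follow from monotonicity of the counts and from the limit in the definition of $H^1_0$, once $\tilde H^1_0$ has been identified with the normalized count.
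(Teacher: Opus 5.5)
Your proposal is correct and is the standard argument behind the well-known fact the paper invokes here; the paper gives no proof of its own and defers to Gruner's thesis. The argument runs as follows: $\tilde H^1_0(Z_E^{\leq m})=2^{-m}|T_E\cap 2^m|$ decreases to $c$, so the single global error $a(m)-c$ bounds every local error $a_\sigma(m)-c_\sigma$; hence the nodes of positive local measure form a pruned subtree that is $\Sigma^0_1$ in $Z_E\oplus c$, and following it gives a path computable from $Z_E\oplus c$ (the $\tfrac23$-density detour can be dropped, and since your search need not halt at a node of local measure $0$, either carry a positivity certificate along with each $\sigma_n$, as with the pairs $(k_n,\sigma_n)$ in the proof of $\BCTC$-II, or simply note that computability from $Z_E\oplus c$ is all an $\omega$-model requires).
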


We will now prove lower bounds for the complexity of $\mathsf{Reg}$-SP, $\mathsf{Reg}$-S, and $\mathsf{Reg}$-P.

\begin{proposition} \label{reg-rev}
    The following hold:
    \begin{enumerate}[(i)]
        \item  $\WKL_0 \leq_\omega \mathsf{Reg}\text{-}\mathrm{SP}$
        \item $\WWKL_0 \leq_\omega \mathsf{Reg}\text{-}\mathrm{S}$
        \item $\WWKL_0 \leq_\omega \mathsf{Reg}\text{-}\mathrm{P}$
    \end{enumerate}
\end{proposition}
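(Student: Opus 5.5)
For each part we fix an $\omega$-model $\mathcal{M}$ of $\RCA_0$ satisfying the relevant regularity principle and extract the required path from the witness $E$.

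\textbf{Part (i).} This follows the reversal in Proposition~\ref{WKL-reg}. Given an infinite tree $T\in\mathcal{M}$ with code $Z$, let $F=[T]$, which is nontrivial. Take $c=0$, which trivially satisfies $0\le c\le H^s_n(Z)$ because every value of $\tilde H^s_n$ is nonnegative. $\mathsf{Reg}$-SP then yields a nontrivial pruned closed $E\subseteq F$. Its pruned tree is infinite, so its leftmost path is computable from $Z_E$ and lies in $\mathcal{M}$. That path belongs to $E\subseteq[T]$, which proves $\WKL_0$.

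\textbf{Parts (ii) and (iii).} We must produce a path through any tree $T\in\mathcal{M}$ with $\mu([T])>0$, and the plan is to use Lemma~\ref{lem:path-from-comp-meas}. Specialize to $s=1$, $n=0$, so that $H^1_0$ is Lebesgue measure on tree codes. Since $\mu([T])>0$, pick a rational $c$ with $0<c\le H^1_0(Z_T)$; this is possible because the infimum of the computable decreasing sequence $\tilde H^1_0(Z_T\restriction_k)$ is positive, and a rational below it exists.

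For (ii), apply $\mathsf{Reg}$-S to $F=[T]$ and $c$. This gives a nontrivial standard closed $E\subseteq F$ with $H^1_0(Z_E)=c$. The value $c$ is an exact, positive real in $\mathcal{M}$, so Lemma~\ref{lem:path-from-comp-meas} supplies $X\in E\subseteq[T]$ in $\mathcal{M}$, giving $\WWKL_0$.

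For (iii), $\mathsf{Reg}$-P requires the starting set to be pruned closed, and $[T]$ need not be. We therefore cannot apply it directly to $T$; we must instead find a computable pruned set whose subsets lie inside $[T]$.

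\textbf{Main obstacle: the pruned set for (iii).} My first attempt is to build a pruned set from $T$ without pruning $T$ itself. Fix $c\le \mu([T])/2$ and use the set $F=[T]\cup[P]$, where $P$ is a pruned tree computable from $T$, for example by the standard trick of attaching the all-zero path to each leaf of $T$. This $F$ is pruned closed, but it has measure roughly $\mu([T])$ plus a null set, and a subset of it with measure exactly $c$ need not avoid the added paths.

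Instead I would use pruned closed sets of the form "leftmost infinite extension" of $T$: let $P$ be the tree consisting of $T$ together with, below each dead end, a single infinite path. $P$ is computable and pruned, and $[P]\setminus[T]$ is countable, hence null. Applying $\mathsf{Reg}$-P to $[P]$ and $c$ gives a pruned $E\subseteq[P]$ with $H^1_0(Z_E)=c>0$. Because $E$ is pruned, its leftmost path is computable, but that path might be one of the added countably many paths rather than a path of $T$.

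To handle this I would iterate. Alternatively, I would aim to show that the pruned tree of $E$ restricted to $T$, namely $\{\sigma\in Z_E:\sigma\in T\}$, still has computable measure $c$, because the added paths contribute zero measure. Lemma~\ref{lem:path-from-comp-meas} applied to that restricted tree would then give a path in $[T]$. I expect the delicate point to be verifying that the restriction has exactly measure $c$ inside $\mathcal{M}$.
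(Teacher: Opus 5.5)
Parts (i) and (ii) are correct and coincide with the paper's arguments. Part (i) is the reversal of Proposition~\ref{WKL-reg}: take the leftmost path of the pruned witness. Part (ii) applies $\mathsf{Reg}$-S with $s=1$, $n=0$ and a positive rational $c\le H^1_0(Z_T)$, then invokes Lemma~\ref{lem:path-from-comp-meas}.

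In (iii) your final construction is also the paper's. Let $T'$ be $T$ together with the all-zero ray below each dead end. $\mathsf{Reg}$-P gives a pruned $E'\subseteq[T']$ with $H^1_0(Z_{E'})=c$, and you restrict to $T$ by setting $Z_E(\sigma)=1$ iff $Z_{E'}(\sigma)=1$ and $\sigma\in T$. The detours through $[T]\cup[P]$ and ``iterating'' add nothing and can be dropped. However, you leave open the step that $H^1_0(Z_E)=c$ exactly. That is precisely where the paper's proof of (iii) does all its work, and Lemma~\ref{lem:path-from-comp-meas} cannot be invoked without it. As written, (iii) is incomplete.

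The step is not hard to close. Since $Z_E\le Z_{E'}$ pointwise, only $H^1_0(Z_E)\ge c$ needs proof.

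The paper argues combinatorially. Suppose $2^{-m}\cdot|\{\sigma:|\sigma|=m,\ Z_E(\sigma)=1\}|<c-\epsilon$. Let $N_m$ be the number of length-$m$ strings $\sigma$ with $Z_{E'}(\sigma)=1$ but $\sigma\notin T$. Each such $\sigma$ lies on one of the added zero rays. Since $E'$ is pruned, $\sigma$ therefore has exactly one extension of each length $k\ge m$ in the tree coded by $Z_{E'}$. Split the length-$k$ level of that tree according to whether the length-$m$ prefix lies in $T$. This gives $\tilde{H}^1_0(Z_{E'}^{\leq k})<(c-\epsilon)+N_m2^{-k}$, which is $<c$ once $N_m2^{-k}<\epsilon$. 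That contradicts $H^1_0(Z_{E'})=c$.

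Alternatively, your remark that the added paths are countable, hence null, can be made rigorous directly.
\begin{itemize}
\item In an $\omega$-model, ``$H^1_0(Z_E)=c$'' is an arithmetical statement about $Z_E$ and $c$. It therefore holds in $\mathcal{M}$ iff it holds in the standard model.
\item In the standard model, $H^1_0$ of a tree code is the Lebesgue measure of its set of paths.
\item The path set of $Z_E$ is $E'\cap[T]$. This differs from $E'$ only inside the countable set $\{\tau^{\frown}0^{\infty}:\tau \text{ a dead end of } T\}$, so it has measure $c$.
\end{itemize}
Either argument must actually be supplied; naming it as the delicate point is not a proof of it.
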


\begin{proof} 
(i): Observe that $\mathsf{Reg}$-SP is simply a strengthening of the statement in Proposition \ref{WKL-reg}, which we have already shown is equivalent to $\WKL_0$. $\square$

\medskip
(ii): Let $T \subseteq 2^{<\omega}$ be a tree satisfying the hypothesis of $\WWKL_0$, i.e. a tree with the property that
\[
\lim_{k \to \infty}2^{-k} \cdot |\{\sigma \in T: |\sigma| = k\}| > 0.
\]
(Technically, we should understand the above to mean that the terms in our sequence are uniformly bounded away from zero, since formally the limit may not exist without $\ACA_0.$) 

If $Z \in 2^{\omega}$ is the code for $T$, then the above condition on $T$ implies that $H^1_0(Z) > 0$, using the argument used in the proof of Lemma~\ref{lem:path-from-comp-meas}. We can therefore choose $c > 0$ such that $H^1_0(Z) \geq c$ and apply Lemma~\ref{lem:path-from-comp-meas}.

\medskip
(iii): Again, let $T \subseteq 2^{<\omega}$ be an infinite tree satisfying the hypothesis of $\WWKL_0$. We will define a new tree $T' \subseteq 2^{<\omega}$ by setting $\sigma \in T'$ if and only if $\sigma$ satisfies one of the following conditions:
\begin{enumerate}[(1)]
    \item $\sigma \in T$
    \item There exists $\tau \subseteq \sigma$ such that $\tau \in T$, neither of $\tau^{\frown}0$ or $\tau^{\frown}1$ are in $T$, and $\sigma(i) = 0$ for all $i$ such that $|\tau| \leq i < |\sigma|$.
\end{enumerate}

$T'$ is a pruned tree containing our original tree $T$ and hence also satisfies the hypothesis of $\WWKL_0.$ In particular, if $Z'$ represents the code for $T'$, then we may choose $c > 0$ such that $H^1_0(Z') \geq c$. By $\mathsf{Reg}$-P, there exists a nontrivial pruned closed set $E' \subseteq [T']$ whose associated tree code $Z_E'$ satisfies $H^1_0(Z_E') = c$. 

We define a tree code $Z_E \in 2^{\omega}$ by setting $Z_E(\sigma) = 1$ if and only if we have both $Z_E'(\sigma) = 1$ and $\sigma \in T$. We claim that $H^1_0(Z_E) = c$. 
It suffices to show that $H^1_0(Z_E) \geq c$. 

Suppose for contradiction that $H^1_0(Z_E) < c$. This would imply that there was some $m \in \omega$ for which 
\[
\tilde{H}^1_0(Z_E^{\leq m}) =  2^{-m} \cdot |\{\sigma \in 2^{<\omega}: Z_E(\sigma) = 1 \wedge |\sigma| = m\}| < c.
\]
For this choice of $m$, suppose we have a length-$m$ string $\sigma$ for which $Z_E'(\sigma) = 1$ but $Z_E(\sigma) = 0$. Since $Z_E'$ represents a pruned tree, this $\sigma$ must have some infinite extension $X \in E' \subseteq [T'].$ However, since $Z_E(\sigma) = 0$, it follows that $\sigma \notin T$. From the definition of $T'$, this infinite sequence $X$ can only have one form: the longest initial segment of $\sigma$ in $T$, followed by an infinite string of 0's. In particular, for any $k \geq m$, there will be exactly one length-$k$ extension of $\sigma$ in the tree for $Z_E';$ namely, the length-$k$ initial segment of this sequence $X$.

Now, let $N_m$ denote the number of length-$m$ strings $\sigma$ which satisfy the property above, choose $\epsilon > 0$ so small that $\tilde{H}^1_0(Z_E^{\leq m}) < c - \epsilon$, and choose $k \geq m$ large enough that $N_m \cdot 2^{-k} < \epsilon.$ For this $k$, let $V_k'$ denote the set of length-$k$ strings $\sigma$ with $Z_E'(\sigma) = 1$. By previous considerations, we have
\[
\tilde{H}^1_0(Z_E'^{\leq k}) = W^1(V_k') \geq c,
\]
where the last inequality follows from our assumption on $Z_E'$ from $\mathsf{Reg}$-P. But now suppose we define
\[
V_{k,E} = \{ \sigma \in V_k': Z_E(\sigma \upharpoonright_m) = 1 \}.
\]
Then we will have 
\begin{align*}
W^1(V_{k,E}) &= 2^{-k} \cdot |\{ \sigma \in V_k': Z_E(\sigma \upharpoonright_m) = 1 \}| \\
&\leq 2^{-m} \cdot |\{\sigma \in 2^{<\omega}: Z_E(\sigma) = 1 \wedge |\sigma| = m\}| \\
&= \tilde{H}^1_0(Z_E^{\leq m}) \\
&< c - \epsilon.\\
\end{align*}

But now consider any $\sigma \in V_k' \setminus V_{k, E}$. In that case, we will have $Z_E'(\sigma \upharpoonright_m) = 1$ but $Z_E(\sigma \upharpoonright_m) = 0$, and by our previous discussion, there will be exactly $N_m$ strings $\sigma$ satisfying this property. Thus, 
\[
W_1(V_k' \setminus V_{k,E}) = N_m \cdot 2^{-k} < \epsilon.
\]
This means that for the total weight of $V_k'$, we have
\[
W_1(V_k') = W_1(V_{k,E}) + W_1(V_k' \setminus V_{k,E}) < (c - \epsilon) + \epsilon = c,
\]
a contradiction.

Since $H^1_0(Z_E) = c$, we can apply Lemma~\ref{lem:path-from-comp-meas} again to conclude the existence of some $X \in 2^{\omega}$ which lies in the corresponding closed set $E$. From the definition of $Z_E$, we have $E \subseteq [T]$, so our original tree $T$ contains an infinite path, as desired. 
\end{proof}

Given that any pruned closed set is standard closed, it is clear that $\mathsf{Reg}$-SP implies both $\mathsf{Reg}$-S and $\mathsf{Reg}$-P, and likewise that both $\mathsf{Reg}$-S and $\mathsf{Reg}$-P imply $\mathsf{Reg}$-PS. Beyond that, it is unclear whether any of these implications can be reversed using only the axioms of $\RCA_0$. However, we can show that any differences between these results disappear in $\omega$-models of $\WKL_0$. Compare this with Section~\ref{ssec:Hmeas}, where it was shown that for full interchangeability between the two representations, we need $\ACA_0.$

\begin{corollary} 
For $\omega$-models of $\WKL_0$, the regularity results of $\mathsf{Reg}$-S, $\mathsf{Reg}$-SP, $\mathsf{Reg}$-P, and $\mathsf{Reg}$-PS are all pairwise equivalent.
\end{corollary}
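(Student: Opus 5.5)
The plan is to fix an $\omega$-model $\mathcal{M}$ of $\WKL_0$ and show that $\mathsf{Reg}$-PS implies $\mathsf{Reg}$-SP in $\mathcal{M}$. Since $\mathsf{Reg}$-SP implies $\mathsf{Reg}$-S and $\mathsf{Reg}$-P, and each of these implies $\mathsf{Reg}$-PS, this closes the cycle. The argument has two steps: first pass from a standard closed $F$ to a pruned closed set of the same $H^s_n$-measure, and then pass from a standard closed subset of prescribed measure to a pruned one of the same measure. Both reductions come from Corollary~\ref{WKL-exists}, which holds in $\mathcal{M}$.

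\emph{Step 1.} Let $F$ be a nontrivial standard closed set with code $Z_F$, and let $0 \leq c \leq H^s_n(Z_F)$. I will first treat the case where $H^s_n(Z_F) = d$ exists in $\mathcal{M}$. Then Corollary~\ref{WKL-exists} gives a nontrivial pruned closed $F' \subseteq F$ with $H^s_n(Z_{F'}) = d \geq c$. Applying $\mathsf{Reg}$-PS to $F'$ gives a nontrivial standard closed $E_0 \subseteq F' \subseteq F$ with $H^s_n(Z_{E_0}) = c$.

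If $d$ does not exist in the model, I expect two possible fixes. One is to avoid needing $d$: use Proposition~\ref{WKL-reg} to get a pruned $F' \subseteq F$ with $c \leq H^s_n(Z_{F'}) < c+1$. The hypothesis of $\mathsf{Reg}$-PS only requires $c \leq H^s_n(Z_{F'})$ in the abused-inequality sense, so this suffices. I will use this version throughout, since it sidesteps existence entirely.

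\emph{Step 2.} $E_0$ is nontrivial standard closed and its measure $c$ exists. If $c > 0$, Corollary~\ref{WKL-exists} applied to $E_0$ gives a nontrivial pruned closed $E \subseteq E_0 \subseteq F$ with $H^s_n(Z_E) = c$, which is what $\mathsf{Reg}$-SP requires.

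The case $c = 0$ needs separate care, since "nontrivial" must be retained. Here I would argue directly. Since $F$ is nontrivial, $\WKL_0$ yields a path $X \in F$. The pruned tree of initial segments of $X$ codes the singleton $\{X\}$. For $s > 0$ its $H^s_n$-measure is $0$: it is covered by one cylinder of length $m$ for every $m \geq n$, so $\tilde{H}^s_n \leq 2^{-sm} \to 0$. If $s = 0$ the measure is $1$ rather than $0$, so the paper's standing assumption $s>0$ matters. In fact, in the $c = 0$ case one does not even need $\mathsf{Reg}$-PS.

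\emph{Main obstacle.} The one place I expect friction is checking that Corollary~\ref{WKL-exists} applies to $E_0$ exactly as stated: that $E_0$ is nontrivial (this follows from $c>0$ and Case 2 of Definition~\ref{def:H-tilde}), and that the equality $H^s_n(Z_E) = H^s_n(Z_{E_0})$ transfers correctly. The rest is bookkeeping with the implications already noted before the corollary.
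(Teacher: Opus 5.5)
Your argument is correct and is essentially the paper's proof. You reduce to $\mathsf{Reg}$-PS $\Rightarrow$ $\mathsf{Reg}$-SP, use Proposition~\ref{WKL-reg} to get a pruned $F' \subseteq F$ with $c \le H^s_n(Z_{F'}) < c+\epsilon$, apply $\mathsf{Reg}$-PS to $F'$, and then apply Corollary~\ref{WKL-exists} to the resulting standard closed $E_0$. Your separate treatment of $c=0$ via the singleton $\{X\}$ is valid for $s>0$ and a harmless safeguard, though not needed: the paper applies Corollary~\ref{WKL-exists} uniformly, and $E_0$ is nontrivial simply because $\mathsf{Reg}$-PS says so, not because of Case~2.
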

\begin{proof}
By the discussion in the preceding paragraph, it suffices to prove that under $\WKL_0$, $\mathsf{Reg}$-PS (the weakest statement) implies $\mathsf{Reg}$-SP (the strongest statement). 

Therefore, assume we have a nontrivial standard closed set $F \subseteq 2^{\omega}$ with tree code $Z_F$, and let $c$ be such that $0 \leq c \leq H^s_n(Z_F).$ Consider any positive $\epsilon > 0$. By Proposition \ref{WKL-reg} in $\WKL_0$, we may choose a nontrivial pruned closed set $F' \subseteq F$ whose associated tree code $Z_F'$ satisfies $c \leq H^s_n(Z_F') < c + \epsilon$.

Now, use $\mathsf{Reg}$-PS to find a nontrivial standard closed set $E' \subseteq F'$ whose associated tree code $Z_E'$ satisfies $H^s_n(Z_E') = c$. Then use Corollary \ref{WKL-exists} in $\WKL_0$ to find a nontrivial pruned closed set $E \subseteq E'$ whose associated tree code $Z_E$ satisfies $H^s_n(Z_E) = H^s_n(Z_E') = c$. As $E \subseteq E' \subseteq F' \subseteq F$, this $E$ serves as our desired witness for $\mathsf{Reg}$-SP.  
\end{proof}

One may wonder if there any measure regularity results which are $\omega$-equivalent to $\ACA_0$. As discussed in previous sections, one difficulty with using measure theoretic results in reverse mathematics comes from the fact that in weaker subsystems of second order arithmetic, the quantity $H^s_n(Z)$ may not actually exist for all tree codes $Z$. In fact, by modifying Yu's result for open sets~\cite{yu1987measure}, one can show that
$\ACA_0$ is actually required to obtain a well-defined Lebesgue measure for all closed sets.

That said, we can still meaningfully compare the measure of a (standard or pruned) closed set with a potential subset if we adapt our notation to account for the fact that $H^s_n(Z)$ may not be defined. If we have two closed sets $E$ and $F$ with respective tree codes $Z_E$ and $Z_F$ respectively, we will write $H^s_n(Z_E) \leq H^s_n(Z_F)$ to mean the following:
\[
\forall \epsilon > 0\: \exists k \in \omega\: \forall m \in \omega \: \tilde{H}^s_n(Z_{E}^{\leq k}) < \tilde{H}^s_n(Z_F^{\leq m}) + \epsilon.
\]

It is straightforward to show that if both quantities $H^s_n(Z_E)$ and $H^s_n(Z_F)$ exist, then the above statement is equivalent to the interpretation of $H^s_n(Z_E) \leq H^s_n(Z_F)$ as a normal inequality of real numbers. Naturally, we will write $H^s_n(Z_E) = H^s_n(Z_F)$ to mean that we have both $H^s_n(Z_E) \leq H^s_n(Z_F)$ and $H^s_n(Z_F) \leq H^s_n(Z_E)$.

We then see that passing from a standard closed to a pruned closed subset of the same measure is $\omega$-equivalent to $\ACA_0$.

\begin{proposition} The following is $\omega$-equivalent to $\ACA_0$: If $F \subseteq \Cant$ is a nontrivial standard closed set with associated tree code $Z_F$, then there exists a nonempty pruned closed set $E \subseteq F$ whose associated tree code $Z_E$ satisfies $H^s_n(Z_E) = H^s_n(Z_F)$.
    
\end{proposition}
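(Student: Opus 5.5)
Both directions of the $\omega$-equivalence will be handled separately; the forward one is routine and the reversal is where the work lies.

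\emph{$\ACA_0$ suffices.} Given a tree $T_F$ with code $Z_F$, I apply Theorem~\ref{ACA-T}(i) to get the pruned tree $\tilde{T}_F \subseteq T_F$ with $[\tilde{T}_F] = [T_F]$, and let $Z_E$ be its code. In $\ACA_0$ both $H^s_n(Z_E)$ and $H^s_n(Z_F)$ exist as reals, because bounded monotone sequences converge. Since $\WKL_0$ holds, Corollary~\ref{WKL-meas} identifies each of them with $\mathcal{H}^s_{2^{-n}}$ of the same closed set $[T_F]$. Hence they are equal, and so equal in the weakened sense defined just above the statement. Nonemptiness of $E$ follows from Weak K\"onig's Lemma applied to the infinite tree $T_F$.

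\emph{Reversal.} Assume the statement holds in an $\omega$-model. I will show the model contains the range of any one-one function $f$; equivalently, it contains the jump of any set. I take $s = 1$ and $n = 0$, so that $\tilde{H}^1_0(Z^{\leq k})$ is just $2^{-k}$ times the number of length-$k$ nodes. I will build a computable tree $T$ whose level counts encode $f$ through ``dead branches.'' Fix disjoint clopen blocks $N_{\rho_j}$ with $\rho_j = 0^j1$, of Lebesgue measure $2^{-j-1}$, together with a full branch (say $N_{00\cdots}$, or an extra fixed clopen piece of positive measure) so that $T$ stays nontrivial. Inside block $j$ the tree $T$ is full below $\rho_j$ until $f$ enumerates $j$ at some stage $t$, at which point all extensions of $\rho_j$ of length greater than $|\rho_j| + t$ are cut off. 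Then $[T] \cap N_{\rho_j}$ is all of $N_{\rho_j}$ if $j \notin \mathrm{ran}(f)$ and empty otherwise. The point is that at every finite level $k$, the block still contributes roughly $2^{-j-1}$ worth of nodes if $j$ is enumerated late, so the level counts of $T$ do not reveal which blocks die.

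Now apply the principle to get a pruned $Z_E$ with $[T_E] \subseteq [T]$ and $H^1_0(Z_E) = H^1_0(Z_F)$. Because $T_E$ is pruned, $\rho_j \in T_E$ forces $[T_E] \cap N_{\rho_j} \ne \emptyset$, and hence $j \notin \mathrm{ran}(f)$. This gives the inclusion $\{j : \rho_j \in T_E\} \subseteq \overline{\mathrm{ran}(f)}$.

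The main obstacle is the converse inclusion: showing that measure equality forces $T_E$ to keep every surviving block. Measure equality is only a limiting statement, so a pruned subtree could drop a small block $j$ and lose only $2^{-j-1}$, which is not detectable at finite precision uniformly. I expect to handle this by weighting: make the surviving block $j$ in $[T]$ have measure $2^{-j-1}$. Dropping any surviving block then lowers $H^1_0$ by at least $2^{-j-1} > 0$. Since $\mathcal{H}$ and $H$ agree only under $\WKL_0$, I should argue this directly from the weakened definition of equality. Take $\epsilon = 2^{-j-2}$. The inequality $H(Z_F) \le H(Z_E)$ gives some level $m$ at which the count of $T_E$ is within $\epsilon$ of the infimum for $T$. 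Since $T_E$ omits block $j$, this can happen only if the block-$j$ nodes of $T$ at every level still carry weight at least $2^{-j-1}$, which holds because the block survives. That yields a contradiction. Thus $j \notin \mathrm{ran}(f) \iff \rho_j \in T_E$, a $\Delta^0_1$ definition relative to $Z_E$, so $\mathrm{ran}(f)$ exists in the model. The care needed is to choose the timing of the cuts so that dead blocks also do not mislead the computation of $\lim_k$. I believe the level counts decrease correctly regardless, and I will verify this by a direct count.
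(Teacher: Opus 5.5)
Your proposal is correct and follows the paper's proof. It uses the same tree (blocks $N_{0^j1}$ plus the all-zero branch, with block $j$ cut once $f$ enumerates $j$), the same choice $s=1$, $n=0$, and the same two inclusions, the first via prunedness and the second via a level-count comparison. In that last step, use the weakened inequality $H^1_0(Z_F)\le H^1_0(Z_E)$ with $\epsilon=2^{-j-2}$ to get one level $k$, which by monotonicity you may take with $k\ge j+1$, such that $\tilde{H}^1_0(Z_F^{\leq k})<\tilde{H}^1_0(Z_E^{\leq k})+\epsilon$. This is impossible, because $T_E\subseteq T$ (each node of the pruned $T_E$ lies on a path computable from $Z_E$, which belongs to $E\subseteq F$), while all $2^{k-j-1}$ length-$k$ extensions of $0^j1$ lie in $T\setminus T_E$. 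Your ``some level within $\epsilon$ of the infimum'' phrasing misstates what the definition gives, and your worries about extra weighting and the timing of the cuts are unnecessary.
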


\begin{proof}
The fact that $\ACA_0$ proves the stated result is straightforward, since standard and pruned closed sets are equivalent. Therefore, given a nontrivial standard closed set $F \subseteq 2^{\omega}$ with tree code $Z_F$, we can simply take a code $Z_F'$ for a pruned tree representing the same set. Applying the equivalence of the $H^s_n$ and $\mathcal{H}^s_{2^{-n}}$ measures in $\ACA_0$, we can conclude that 

$$H_n^s(Z_F') = \mathcal{H}^s_{2^{-n}}(F) = H_n^s(Z_F).$$

\medskip
For the reverse direction, we will show that the given statement is sufficient to prove that every one-to-one function $f: \omega \to \omega$ has a range. 

Given such a function $f$, we define a code $Z_F$ for the same tree $T$ which was used in the proof of (iii) $\implies \ACA_0$ in Proposition \ref{ACA-T}. As a reminder, this tree $T$ consists of all $\sigma \in 2^{<\omega}$ such that either
 \begin{enumerate}[(1)]
     \item $\sigma$ consists entirely of 0's
     \item $\sigma \supseteq \tau_n$ for some $n \in \omega$, and $f(k) \neq n$ for all $k < |\sigma|$,
 \end{enumerate}
where $\tau_n$ represents the length-$(n+1)$ string consisting of $n$ 0's followed by a single 1. For the corresponding closed set $F$, we will apply the hypothesis  to obtain a nontrivial pruned closed set $E \subseteq F$ whose tree code $Z_E$ satisfies $H^1_0(Z_E) = H^1_0(Z_F)$. 

We claim that for all $n \in \omega$, 
\[
Z_E(\tau_n) = 1 \; \Leftrightarrow \; n \notin \text{ran}(f). \tag{$\ddagger$} \label{equ:ran_from_tree}
\] 
It follows that the range of $f$ is computable from $Z_E$, and thus $\text{ran}(f)$ must exist in our $\omega$-model. 

To see~\eqref{equ:ran_from_tree}, suppose first that $n \in \omega$ is such that $Z_E(\tau_n) = 1$. Since $Z_E$ represents a pruned tree, there must be some infinite sequence $X \supseteq \tau_n$ with $X \in E \subseteq F$. From this, we can deduce that $n \notin \text{ran}(f)$, since otherwise some sufficiently long initial segment of $X$ would be excluded from the tree coded by $Z_F$.

Suppose instead that $n$ is such that $Z_E(\tau_n) = 0$. Choose $\epsilon > 0$ small enough that $\epsilon < 2^{-|\tau_n|} = 2^{-n-1}$. Since $H^1_0(Z_E) = H^1_0(Z_F)$, there must be some $k \in \omega$ such that for all $m \in \omega$, we have
$$\tilde{H}^1_0(Z_F^{\leq k}) < \tilde{H}^1_0(Z_F^{\leq m}) + \epsilon.$$
By monotonicity, we may assume that $k \geq n+1$. Applying the above inequality to $m = k$, we can conclude
$$2^{-k} \cdot |\{\sigma \in 2^{<\omega}: Z_F(\sigma) = 1 \wedge |\sigma| = k\}| < 2^{-k} \cdot |\{\sigma \in 2^{<\omega}: Z_E(\sigma) = 1 \wedge |\sigma| = k\}| + \epsilon,$$
and subsequently
$$|\{\sigma \in 2^{<\omega}: Z_F(\sigma) = 1 \wedge |\sigma| = k\}| < |\{\sigma \in 2^{<\omega}: Z_E(\sigma) = 1 \wedge |\sigma| = k\}| + \epsilon \cdot 2^k.$$

Note that any string $\sigma$ for which $Z_E(\sigma) = 1$ must also satisfy $Z_F(\sigma) = 1$. Again, this follows from the fact that $Z_E$ represents a pruned tree, so any $\sigma$ in this tree has some infinite extension $X \in E \subseteq F$, whose initial segments are all in the tree coded by $Z_F$.
Thus, we can deduce from the above inequality that the number of length-$k$ strings $\sigma$ for which $Z_F(\sigma) = 1$ but $Z_E(\sigma) = 0$ is less than $\epsilon \cdot 2^{k} < 2^{k - n - 1}$ by choice of $\epsilon$.

But now observe that since $Z_E(\tau_n) = 0$, none of the $2^{k-n-1}$-many length-$k$ extensions of $\tau_n$ can be in the tree coded by $Z_E$. If all of these strings were in the tree coded by $Z_F$, we would contradict the bound outlined above. Therefore, at least one of these strings $\sigma$ must be excluded from $Z_F$, which by its definition means that there is some $i < |\sigma| = k$ for which $f(i) = n$. Thus, $n \in \text{ran}(f)$, as desired.
\end{proof}

Compare the result from this proposition to that of Corollary \ref{WKL-exists}, where the measure of the original closed set $F$ is assumed to already exist.  

As an aside, this pair of results is somewhat analogous to some prior results concerning maxima/minima for continuous functions on compact sets. If the supremum of the function's values on the set exists, as a real number, then the existence of a point in the set realizing that supremum is equivalent to $\WKL_0$; otherwise, the existence is equivalent to $\ACA_0$. See \cite{simpson2009subsystems} for more details.

\subsection{Besicovitch's Theorem}
\label{ssec:besicovitch}

In this final section, we will use the techniques and results developed thus far to analyze the strength of the following regularity result for the $s$-dimensional Hausdorff measure, first proven by Besicovitch in 1952~\cite{besicovitch1952existence}.

\begin{theorem}[Besicovitch]
Let $s > 0$, and let $F \subseteq 2^{\omega}$ be a closed set with $\mathcal{H}^s(F) = \infty$. Then for any real number $c > 0$, there exists a closed subset $E \subseteq F$ with $\mathcal{H}^s(E) = c$. 
\end{theorem}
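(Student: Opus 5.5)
Classically, the plan is to realize Besicovitch's nested construction as a single Baire category argument in the space $[S_{T_F}]$ of tree codes for closed subsets of $F$, so that the final witness $E$ is produced by one application of the Dense Monotone Minimum principle ($\DMM$, provable in $\ACA_0$ by Corollary~\ref{cor:DMMin_ACA}) rather than as an infinite intersection $\bigcap_k E_k$. Fix $s>0$, $c>0$, and a tree $T_F$ with code $Z_F$ for $F$. By Theorem~\ref{ACA-T} we may take $T_F$ pruned, and by Corollary~\ref{WKL-meas} we may identify $\mathcal{H}^s_{2^{-n}}$ with $H^s_n$ on tree codes. Since $\mathcal{H}^s(F)=\infty$, choose $n_0$ with $H^s_{n_0}(Z_F)\geq c$. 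The aim is a code $Z$ for which $H^s_n(Z)$ stays trapped near $c$ for every $n\geq n_0$; then $\mathcal{H}^s(E_Z)=\lim_n H^s_n(Z)$ is finite and bounded below by $c$.

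The key steps are as follows. First, for each $n\geq n_0$, consider the closed set $\mathcal{S}_n=\{Z\in[S_{T_F}]: H^s_{n_0}(Z)\geq c\}$ together with the open sets
\[
U_n=\{Z: H^s_{n}(Z)<c+2^{-n}\}\quad\text{(relative to the ambient constraint).}
\]
Second, prove a density statement generalizing Lemma~\ref{H^s_n-inf}: for any $Z\in\mathcal{S}_{n_0}$, any initial segment $\tau\subseteq Z$ and any $n$, there is $Y\supseteq\tau$ in $\mathcal{S}_{n_0}$ with $H^s_n(Y)<c+2^{-n}$. The construction mirrors the proof of Lemma~\ref{H^s_n-inf}: beyond level $m$, keep one branch per live node, which makes every $H^s_{n'}$ small, then add back branches one at a time. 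The difficulty is that adding a branch $\sigma_i$ of length $m$ raises $H^s_{n}$ by at most $2^{-sm}$, but the lower constraint is in terms of $H^s_{n_0}$, which also only changes by at most $2^{-sm}$. So we stop at the first $Y_i$ with $H^s_{n_0}\geq c$ and must then control $H^s_{n}(Y_i)$ for the larger $n$.

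Third, apply a Baire argument. Either use $\BCTC$ (Theorem~\ref{thm:BCTC}) on the standard closed set $\mathcal{S}_{n_0}$ with the dense open sets $U_n$, or package the data as a single monotone decreasing function $\tilde f$ (for instance, a weighted combination such as $\sum_n 2^{-n}\min(\tilde H^s_n,c+1)$) that is dense above its infimum, and invoke $\DMM$. Either route gives $Z\in\mathcal{S}_{n_0}\cap\bigcap_n U_n$ with $c\le H^s_{n_0}(Z)\le H^s_n(Z)<c+2^{-n}$ for all $n$, hence $\mathcal{H}^s(E_Z)=c$ exactly. Since $\mathcal{S}_{n_0}$ is computable from $Z_F$ and each $U_n$ is uniformly computable, the proof of $\BCTC$-III on a pruned version of $\mathcal{S}_{n_0}$ shows that $Z$ is computable from $Z_F'$.

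The main obstacle is the second step. Besicovitch's theorem classically needs a genuine geometric lemma: thinning at a fine scale $2^{-m}$ does not let $H^s_n$ for $n\le m$ drop far below what the coarse constraint forces. Put differently, the minimal covers for $H^s_n$ and $H^s_{n_0}$ of the modified set must be comparable. I would first try to choose the thinning level $m$ large and add branches in the order used in Lemma~\ref{H^s_n-inf}, then verify that for any $Y_i$ whose $H^s_{n_0}$ lies in $[c,c+\epsilon)$, every $H^s_n$ with $n_0\le n\le m$ lies in the same window up to $\epsilon$. The mechanism would be that a near-optimal $2^{-n_0}$-cover of a set built from isolated length-$m$ branches can be refined to length $\geq n$ at cost at most $\epsilon$, because each fine branch contributes $2^{-sm}$. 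If this fails in general, the fallback is to weaken the target to positive finite measure, imposing $H^s_n<c'$ for a fixed $c'>c$. That requires only the upper bounds, and the density for these is easier since the pruning step alone makes all $H^s_n$ small.
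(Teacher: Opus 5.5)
Your architecture (the closed set $\mathcal{S}^c_{n_0}$, the open sets $\mathcal{U}^{c+2^{-n}}_n$, one application of $\BCTC$, and the $Z_F'$ bound via $\BCTC$-III on a pruned tree for $\mathcal{S}^c_{n_0}$) is exactly the paper's. However, the whole content of Besicovitch's theorem sits in your second step, the paper's property $\mathsf{BD}$, and the mechanism you propose for it fails. Thinning once at a level $m$ and then re-adding full $Z$-subtrees below length-$m$ nodes controls only $H^s_{n_0}$. The first $Y_i$ with $H^s_{n_0}(Y_i)\ge c$ can be fat at every intermediate scale.

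Take $F=2^\omega$, $0<s<1$, $n_0=0$, $c=1/2$, and $\tau$ the root. Then $Y_0$ codes $N_{0^m}$, and $Y_i$ codes the union of the $i+1$ lexicographically least length-$m$ cylinders. This union is a disjoint union of dyadic cylinders of distinct lengths, so $H^s_0(Y_i)\ge 1/2$ forces $\mu(Y_i)\ge\delta_0$ for some $\delta_0=\delta_0(s)>0$. But any cover by cylinders of lengths $\ell\ge n$ satisfies $\sum 2^{-s\ell}\ge 2^{(1-s)n}\sum 2^{-\ell}$. Hence $H^s_n(Y_i)\ge \delta_0\, 2^{(1-s)n}$, which is far above $c+2^{-n}$ once $n\le m$ is large. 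The underlying reason is that a near-optimal coarse cover uses short strings over dense regions, and refining those to length $\ge n$ is expensive, even though the re-added branches start at level $m$.

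Your fallback does not escape this. Making $H^s_n$ small for $n\ge n_0$ also makes $H^s_{n_0}\le H^s_n$ small, so $Y_0\notin\mathcal{S}^c_{n_0}$. Density of $\{H^s_n<c'\}$ inside $\mathcal{S}^c_{n_0}$ demands the same simultaneous lower bound at scale $n_0$ and upper bound at scale $n$, and it would not give measure exactly $c$ anyway. Your single-function $\DMM$ variant presupposes the same density.

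The missing idea is Besicovitch's level-by-level thinning, which the paper formalizes through $n$-thinness. $Z$ is $n$-thin if $H^s_{n+1}(Z_\sigma)\le 2^{-ns}$ for every $\sigma$ of length $n$; that is, below each length-$n$ node a finer cover costs no more than the node itself.
\begin{enumerate}
\item Lemma~\ref{T-U_n} shows $\mathcal{U}^d_n\cap\mathcal{T}_n\subseteq\mathcal{U}^d_{n+1}$. You swap each length-$n$ string of a witnessing cover for a near-optimal $(n+1)$-cover of $Z_\sigma$, at total extra cost below the slack.
\item Lemma~\ref{T-dense} shows $\mathcal{T}_n$ is dense in $\mathcal{S}^c_{n_0}$ for $n\ge n_0$. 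On each non-thin $\sigma$, $\DMM$ together with Lemma~\ref{H^s_n-inf} replaces $Z_\sigma$ by some $Y_\sigma$ with $H^s_{n+1}(Y_\sigma)=2^{-ns}$ exactly.
\item The lower half of this equality is what protects $H^s_{n_0}\ge c$. In a prefix-free cover of $Y$, the proper extensions of such a $\sigma$ form an $(n+1)$-cover of $Y_\sigma$ and so weigh at least $2^{-ns}$. Replacing them by $\sigma$ therefore produces a cover of $Z$ of no greater weight.
\item Density of every $\mathcal{U}^d_n$ in $\mathcal{S}^c_{n_0}$ then follows by arithmetic induction on $n$, starting from Lemma~\ref{BD-fixed}. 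Given $Z\in N_\tau\cap\mathcal{S}^c_{n_0}\cap\mathcal{U}^d_n$, shrink to some $N_{\tau'}\subseteq\mathcal{U}^d_n$ and thin inside $N_{\tau'}$.
\end{enumerate}
Your single-level construction cannot make $H^s_{n_0}$ and $H^s_n$ agree up to $\epsilon$. You would need this multi-scale argument, or an equivalent one, for the proof to go through.
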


Our main goal is to verify that this theorem is provable in $\ACA_0$. 
As discussed in the introduction, this fact is not obvious from Besicovitch's original argument. Fortunately, with the axioms of $\ACA_0$, the various ways of coding closed sets are all equivalent, as are the different ways of representing the $s$-dimensional Hausdorff $2^{-n}$-measure. Thus, the provability of Besicovitch's theorem in $\ACA_0$ does not depend on the precise way in which the theorem is formalized. We choose to prove the formulation below; this version is slightly stronger than the original statement, in that it does not require the original set's Hausdorff measure to be infinite, nor the witnessing subset's measure to be positive. 

\begin{BT}[$\BES$] \label{Bes}
    Let $F \subseteq \Cant$ be a nontrivial standard closed set with tree code $Z_F$, and suppose that $c \geq 0$, $s > 0$ are real numbers such that 
    \[
    c \leq \lim_{n \to \infty}H^s_{2^{-n}}(Z_F).
    \]
    Then there exists a nontrivial standard closed set $E \subseteq F$ whose associated tree code $Z_E$ satisfies
    \[
    \lim_{n \to \infty}H^s_n(Z_E) = c.
    \]
\end{BT}

We will derive this result as a consequence of $\BCTC$, which is equivalent to (and therefore provable in) $\ACA_0$ by Theorem \ref{thm:BCTC}. First, we introduce some notation. We assume throughout the remainder of this section that our parameter $s > 0$ and our given closed set $F \subseteq \Cant$ are fixed, as are the corresponding tree $T_F \subseteq \Str$ and tree code $Z_F \in \Cant$. 

\begin{definition} 
Let $c \geq 0$ be a real number, and let $n \in \N$. Define the tree $T^c_n$ by 
\[
T^c_n = \{ \nu \in S_{T_F}: \tilde{H}^s_n(\nu) \geq c\},
\]
and let $\mathcal{S}^c_n = [T^c_n]$ denote the corresponding closed set. In other words, $\mathcal{S}^c_n$ consists of all $Z \in [S_{T_F}]$ for which $H^s_n(Z) \geq c$. We will let $\mathcal{U}^c_n$ denote the open complement $2^{\N} \setminus \mathcal{S}^n_c$. 
\end{definition}

Note that the definition of $\mathcal{S}^c_n$ is essentially the same as that in Definition \ref{S^c_F}. However, in that section we assumed that the index $n$ was fixed, so we did not need to keep track of it. Here, we will be working with changing values of $n$ so we include that parameter as part of the notation. 

Our $\BCTC$ argument hinges on the following density property, which we will call \textit{Besicovitch-Density} or simply \textsf{BD}.

\begin{BD}[\textsf{BD}] \label{BD}  Let $n_0 \in \N$ be such that $\mathcal{S}^c_{n_0}$ is nonempty. If $d > c$, then for all $n \in \N$, the open set $\mathcal{U}^d_n$ is dense in the closed set $\mathcal{S}^c_{n_0}$. 
\end{BD}

If this proposition holds in $\ACA_0$, then if we choose a sufficiently large $n_0 \in \N$, and a decreasing sequence $(d_n)_{n \in \N}$ of real numbers converging downward to $c$, then $\BCTC$ will allow us to obtain an element $Z \in \mathcal{S}^c_{n_0} \cap \bigcap_{n \in \N}\mathcal{U}^{d_n}_n$. This element $Z$ will 
code a closed subset $E_Z \subseteq F$ satisfying $\mathcal{H}^s(E_Z) = c$. (We will provide the details at the end of this section.)

We present the proof of the Besicovitch Density property in $\ACA_0$ in a series of lemmas. First, we verify that the proposition holds in the case where the indices $n$ agree. 

\begin{lemma}[$\ACA_0$] \label{BD-fixed} Let $n \in \N$ be fixed, and assume $c \geq 0$ is such that $\mathcal{S}^c_{n}$ is nonempty. If $d > c$, then the open set $\mathcal{U}^d_n$ is dense in the closed set $\mathcal{S}^c_{n}$. 
\end{lemma}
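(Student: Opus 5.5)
The plan is to reduce the statement to Lemma~\ref{H^s_n-inf}, which already carries the real content. Unwinding definitions, $\mathcal{U}^d_n$ being dense in $\mathcal{S}^c_n$ means the following. For every string $\tau$ with $N_\tau \cap \mathcal{S}^c_n \neq \emptyset$, we must produce some $X \in \mathcal{S}^c_n$ with $\tau \subseteq X$ and $X \in \mathcal{U}^d_n$, that is, $X \notin \mathcal{S}^d_n$. Equivalently, some initial segment $\nu \subseteq X$ must satisfy $\tilde{H}^s_n(\nu) < d$, so that $H^s_n(X) < d$ in the sense of the notational convention following Definition~\ref{def:monotone_limit}.

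So fix such a $\tau$ and a witness $Z \in N_\tau \cap \mathcal{S}^c_n$. Note that $\mathcal{S}^c_n$ is precisely the set $\mathcal{S}^c_F$ of Definition~\ref{S^c_F} for this fixed $n$. The only discrepancy is the use of $\geq$ versus $\geq_m$ in defining the tree, and by Lemma~\ref{lem:H^s_n_c} both trees have the same paths, namely those $Z$ with $H^s_n(Z)\ge c$. Definition~\ref{S^c_F} also presupposes $c \le H^s_n(Z_F)$. This holds because $Z$ codes a subtree of $T_F$ and $\tilde{H}^s_n$ is monotone under inclusion of coded trees, so $c \le H^s_n(Z) \le H^s_n(Z_F)$. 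Since $\mathcal{S}^c_n$ is nonempty, Lemma~\ref{H^s_n-inf} applies and tells us that $H^s_n$ is dense above $c$ on $\mathcal{S}^c_n$. In particular $Z$ is a limit point for $c$.

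Take $\epsilon = d - c > 0$. The limit point property gives $X \in \mathcal{S}^c_n$ with $\tau \subseteq X$ and $H^s_n(X) < c + \epsilon = d$. Hence some initial segment of $X$ has $\tilde{H}^s_n$-value below $d$, so $X$ leaves the tree $T^d_n$ and $X \in \mathcal{U}^d_n$. This completes the density argument.

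The main obstacle is checking that Lemma~\ref{H^s_n-inf}, stated for $\omega$-models of $\RCA_0$, can be used inside $\ACA_0$ here. Its proof invokes the infinite pigeonhole principle over the finitely many candidates $Y_0,\dots,Y_t$. In $\ACA_0$ this is unproblematic, and it can even be avoided. Since $H^s_n(Y_i)$ exists as a real in $\ACA_0$ and $H^s_n(Y_0)<c+\epsilon\le$ target while $H^s_n(Y_t)\ge c$, we can pick the least $i$ with $H^s_n(Y_i)\ge c$. The increment bound $H^s_n(Y_i)\le H^s_n(Y_{i-1})+2^{-sm}$ passes to the limit and yields $H^s_n(Y_i)<c+\epsilon$. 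I would therefore re-run that step in $\ACA_0$ with actual limits rather than cite the $\omega$-model version blindly; the remaining steps are routine unwinding of definitions.
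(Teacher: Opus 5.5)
Your proposal is correct and follows the same route as the paper, which simply observes that the lemma is a reformulation of Lemma~\ref{H^s_n-inf} (density of $H^s_n$ above $c$ on $\mathcal{S}^c_n$, applied with $\epsilon = d-c$) and that its proof formalizes in $\ACA_0$, where the infinitary pigeonhole principle is available. Your extra remark also works: in $\ACA_0$ you can instead take the least $i\le t$ with $H^s_n(Y_i)\ge c$ and use $H^s_n(Y_i)\le H^s_n(Y_{i-1})+2^{-sm}$, which is a slightly cleaner way to handle that formalization step.
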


\begin{proof}
 This result is essentially just a reformulation of Lemma \ref{H^s_n-inf}. It is straightforward to check that proof can be fully formalized in $\ACA_0$. Note that, unlike in $\RCA_0$, we have access to the infinitary pigeonhole principle in $\ACA_0$.
\end{proof}

Note that the previous lemma actually shows that $\mathcal{U}_k^d$ is dense in $\mathcal{S}_n^c$ for all $k \leq n$: For such a $k$, the closed set $E_Y$ corresponding to the witness $Y$ from the above proof will satisfy 
\[
\mathcal{H}^s_{2^{-k}}(E_Y) \leq \mathcal{H}^s_{2^{-n}}(E_Y) < d
\]
by the definition of the Hausdorff $\delta$-measure. Therefore, $Y \in \mathcal{U}^d_k$ as well.

\medskip
Returning to the proposition \textsf{BD}, by induction it now suffices to prove that if $\mathcal{U}_n^d$ is dense in $\mathcal{S}_{n_0}^c$ for some $n \geq n_0$, we also must have $\mathcal{U}_{n+1}^d$ dense in $\mathcal{S}_{n_0}^c$. Note that we must be somewhat careful, since $\ACA_0$ technically only allows us to apply induction over statements which are arithmetic. However, this does not present an issue, since as long as we have $\WKL_0$, the existence of an infinite path through a tree is equivalent to the existence of infinitely many strings in that tree. In particular, for any $\nu \in 2^{<\N}$ we can express the statement $N_{\nu} \cap \mathcal{S}^c_{n} \neq \emptyset$ as
$$\forall m \geq |\nu|, \exists \tau \supseteq \nu, \tau \in T^c_n \wedge |\tau| = m.$$

Likewise, the entire statement ``$\mathcal{U}^d_n$ \textit{is dense in} $\mathcal{S}^c_{n_0}$" can be expressed in an arithmetic way by noting that
    $$\forall \nu \in 2^{<\N} ( N_{\nu} \cap \mathcal{S}^c_{n_0} \neq \emptyset \implies \exists Y \in N_{\nu} \cap \mathcal{S}^c_{n_0} \cap \mathcal{U}^d_{n_0})$$
is equivalent to 
    $$ \forall \nu \in 2^{<\N}, (N_{\nu} \cap \mathcal{S}^c_{n_0} \neq \emptyset\implies \exists \tau \supseteq \nu, N_{\tau} \cap \mathcal{S}^c_{n_0} \neq \emptyset \wedge \tau \notin T^d_n).$$

Before we proceed to the main proof, let us briefly unpack the inductive hypothesis that $\mathcal{U}^d_n$ is dense in $\mathcal{S}^c_{n_0}$, where $n \geq n_0$.
 Suppose we have string $\tau \in 2^{<\N}$ such that  $N_{\tau} \cap \mathcal{S}_{n_0}^c \neq \emptyset$, and we have used this density assumption to obtain an element $Z \in N_{\tau} \cap \mathcal{S}_{n_0}^c \cap \mathcal{U}^d_n$. Consider an optimal covering $U$ witnessing that $\tilde{H}^s_n(Z^{\leq m}) < d$ for some $m \geq n$. By the definition of $\tilde{H}^s_n$ and the covers used there, we know that all strings in $U$ have length at least $n$. If all of these strings had length strictly greater than $n$ - that is, length at least $n + 1$ - then this same cover $U$ would be considered in the calculation of $\tilde{H}^s_{n+1}(Z^{\leq m})$, meaning that $Z$ would also be in $\mathcal{U}^d_{n+1}$ and we would be done.

Of course, this may not be the case. However, the next best option would be to know that, in a sense, some length-$n$ strings in the cover were unnecessary. One way to interpret this is that each length-$n$ string can be replaced by a set of proper extensions which still ``cover" the appropriate portion of $Z$ but which don't increase the original cover's weight - or at least, that we can ensure that these increases can be made arbitrarily small. 

This idea motivates the following definitions.

\begin{definition}
    Suppose $Z \in [S_{T_F}]$, and $\tau \in \Str.$ Let $Z_{\tau} \in [S_{T_F}]$ denote the sequence defined by $Z_{\tau}(\sigma) = 1$ iff $\sigma$ is compatible with $\tau$ and $Z(\sigma) = 1$. 
\end{definition}

In other words, $Z_\tau$ codes the closed set coded by $Z$ if we restrict the latter to $N_\tau$. Of course, this closed set may be empty.

\begin{definition}
    Let $Z \in [S_{T_F}]$, let $n \in \N$, and let $\tau \in \Str$ have length $n$. Define $Z$ to be \textit{n-thin for} $\tau$ if we have
    $$H^s_{n+1}(Z_{\tau}) \leq 2^{-ns}.$$
    If $Z$ is $n$-thin for all $\tau \in \Str$ with length $n$, then we simply say $Z$ is \textit{n-thin}.
    Let $\mathcal{T}_n$ denote the set of all $n$-thin sequences in $[S_{T_F}]$.
\end{definition}

As suggested by the informal discussion above, these $n$-thin sequences are sufficient to guarantee robustness when passing from $\mathcal{U}^d_n$ to $\mathcal{U}^d_{n+1}.$

\begin{lemma}[$\ACA_0$] \label{T-U_n}
 For all $n \in \N$ and $d > 0$, we have $\mathcal{U}_n^d \cap \mathcal{T}_n \subseteq \mathcal{U}^d_{n+1}.$
\end{lemma}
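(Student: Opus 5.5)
The plan is a direct cover-surgery argument. Let $Z \in \mathcal{U}^d_n \cap \mathcal{T}_n$. Since $Z \in \mathcal{U}^d_n$, there is some $m \geq n$ with $\tilde{H}^s_n(Z^{\leq m}) < d$. Let $V$ be a set realizing this minimum in Case 1 of Definition~\ref{def:H-tilde}, and put $\eta = d - W_s(V) > 0$. Split $V = V_{=n} \cup V_{>n}$, where $V_{=n}$ consists of the strings of length exactly $n$ and $V_{>n}$ of those of length at least $n+1$. The strings in $V_{>n}$ are already admissible for $\tilde{H}^s_{n+1}$. So the task is to replace each $\tau \in V_{=n}$ by a cover consisting of longer strings, at cost at most $2^{-ns}$ plus a small error. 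Note that $|V_{=n}| \leq 2^n$.

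\textbf{Step 1: replace each length-$n$ string.} Fix $\tau \in V_{=n}$. Because $Z$ is $n$-thin for $\tau$, we have $H^s_{n+1}(Z_\tau) \leq 2^{-ns}$. In $\ACA_0$ this limit exists, so by the definition of the limit there is some $m_\tau \geq n+1$ with
\[
\tilde{H}^s_{n+1}(Z_\tau^{\leq m_\tau}) < 2^{-ns} + \eta\, 2^{-n-1}.
\]
Let $V_\tau$ be a minimizing cover for this value. Every string in $V_\tau$ has length between $n+1$ and $m_\tau$. Moreover, every length-$m_\tau$ string $\sigma$ with $Z_\tau(\sigma)=1$ has a prefix in $V_\tau$; by definition of $Z_\tau$, these are exactly the strings with $Z(\sigma)=1$ that are compatible with $\tau$.

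\textbf{Step 2: assemble the new cover.} Let $m' = \max(\{m\} \cup \{m_\tau : \tau \in V_{=n}\})$, which is at least $n+1$, and set
\[
V' = V_{>n} \cup \bigcup_{\tau \in V_{=n}} V_\tau .
\]
Every string in $V'$ has length between $n+1$ and $m'$. I claim $V'$ satisfies condition (b) for $Z^{\leq m'}$. Take $\sigma$ of length $m'$ with $Z(\sigma) = 1$. Then $\sigma\restriction_m$ also has value $1$ in $Z$, since $Z$ codes a tree, so some $\rho \in V$ satisfies $\rho \subseteq \sigma$.
\begin{itemize}
\item If $\rho \in V_{>n}$, we are done.
\item If $\rho = \tau \in V_{=n}$, then $\sigma$ extends $\tau$, and so does $\sigma\restriction_{m_\tau}$. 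Hence $Z_\tau(\sigma\restriction_{m_\tau}) = 1$, and that string has a prefix in $V_\tau$.
\end{itemize}
Therefore
\[
\tilde{H}^s_{n+1}(Z^{\leq m'}) \leq W_s(V') \leq W_s(V_{>n}) + |V_{=n}|\bigl(2^{-ns} + \eta\,2^{-n-1}\bigr) \leq W_s(V) + \eta/2 < d,
\]
where the middle step uses $W_s(V_{=n}) = |V_{=n}|\,2^{-ns}$ and $|V_{=n}| \leq 2^n$. It follows that $H^s_{n+1}(Z) < d$, i.e.\ $Z \in \mathcal{U}^d_{n+1}$.

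\textbf{Expected obstacle.} The only delicate point is the bookkeeping of levels. A cover that is valid at level $m_\tau$, or at level $m$, must remain valid at the common level $m'$. This holds because condition (b) only needs some prefix to be covered, and $Z$ is closed under initial segments. The error budget $\eta\,2^{-n-1}$ per replaced string keeps the final inequality strict. The fact that the limits $H^s_{n+1}(Z_\tau)$ exist, and that finitely many witnesses $m_\tau$ can be chosen, is routine in $\ACA_0$.
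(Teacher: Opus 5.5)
Your proposal is correct and follows the paper's own argument. Take an optimal level-$n$ cover, replace each length-$n$ string $\tau$ by a near-optimal level-$(n+1)$ cover of $Z_\tau$ supplied by $n$-thinness, and split the error budget over the finitely many replaced strings. The differences are cosmetic: you allot $\eta\, 2^{-n-1}$ per string via $|V_{=n}|\leq 2^n$ where the paper allots $\epsilon/t$, and you pick separate levels $m_\tau$ and take their maximum where the paper fixes one $m'$ directly.

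One small slip: your claim $m'\geq n+1$ fails when $V=\emptyset$ and $m=n$. Taking $m'\geq m+1$ fixes it, since the empty cover stays valid at higher levels because $Z$ codes a tree.
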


\begin{proof}
Let $Z \in \mathcal{U}_n^d \cap \mathcal{T}_n$. As $Z \in \mathcal{T}_n \subseteq [S_{T_F}]$, we know that $Z$ does represent a valid tree code, so the fact that $Z \in \mathcal{U}_n^d$ implies that $H^s_n(Z) < d$.
Let $U = \{\tau_0, \dots , \tau_{k-1}\}$ be an optimal cover for $Z$ witnessing that $\tilde{H}^s_n(Z^{\leq m}) < d$ for some $m \geq n$. 

If all strings in $U$ have length at least $n + 1$, then this cover also witnesses that $\tilde{H}^s_{n+1}(Z^{\leq m}) < d$, yielding $Z \in \mathcal{U}_{n+1}^d$ and we are done. 

Otherwise, assume without loss of generality that there is some $t$ with $1 \leq t \leq k$ for which $\tau_0, \tau_1, \dots , \tau_{t-1}$ are exactly the strings in $U$ of length $n$. Since $W_s(U) < d$, we may choose $\epsilon > 0$ small enough that we also have $W_s(U) < d - \epsilon.$ Since $Z$ is $n$-thin, we may choose $m'\geq m$ so large that for all length-$n$ $\tau_i$ in $U$, we have
$$\tilde{H}^s_{n+1}(Z_{\tau_i}^{\leq m'}) < 2^{-ns} + \tfrac{\epsilon}{t}.$$
For each $Z_{\tau_i}$, let $V_i \subseteq 2^{<\N}$ be a witnessing cover with $W_s(V_i) = \tilde{H}^s_{n+1}(Z_{\tau_i}^{\leq m'}),$
and let $U'$ be the cover obtained from $U$ by replacing each string $\tau_i$ with the strings from $V_i$. It is straightforward to verify that $U'$ is a valid cover for computing $\tilde{H}^s_{n+1}(Z^{\leq m'})$. Finally, we note that

\begin{align*}
    W_s(U') &= W_s(V_0) + \dots + W_s(V_{t-1}) + 2^{-s|\tau_{t}|} + \dots + 2^{-s|\tau_{k-1}|} \\
    &< (2^{-ns} + \tfrac{\epsilon}{t}) + \dots + (2^{-ns} + \tfrac{\epsilon}{t}) + 2^{-s|\tau_{t}|} + \dots + 2^{-s|\tau_{k-1}|} \\
    &= 2^{-s|\tau_{0}|} + \dots + 2^{-s|\tau_{t-1}|} + 2^{-s|\tau_{t}|} + \dots + 2^{-s|\tau_{k-1}|} + t(\tfrac{\epsilon}{t}) \\
    &= W_s(U) + \epsilon \\
    &< (d - \epsilon) + \epsilon \\
    &= d.
\end{align*}
This proves that $Z$ is an element of $\mathcal{U}_{n+1}^d$ as desired.
\end{proof}

All that remains to show now is that the $n$-thin elements are sufficiently numerous within each nonempty set $\mathcal{S}^d_{n_0}.$

\begin{lemma}[$\ACA_0$] \label{T-dense} Let $c \geq 0$ and let $n_0 \in \N$ be such that $\mathcal{S}^c_{n_0}$ is nonempty. Then for all $n \geq n_0$, the set $\mathcal{T}_n$ is dense in $\mathcal{S}^c_{n_0}$.
\end{lemma}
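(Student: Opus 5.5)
The plan is to take an arbitrary $Z \in \mathcal{S}^c_{n_0}$ and an initial segment $\nu \subseteq Z$, and to build an $n$-thin $Y \in \mathcal{S}^c_{n_0}$ with $\nu \subseteq Y$. The set $Y$ is obtained from $Z$ by thinning, separately inside each cylinder $N_\tau$ with $|\tau| = n$, exactly those pieces $Z_\tau$ that fail to be $n$-thin. The point is that such thinning does not change $H^s_{n_0}$. We work in $\ACA_0$, so all the measures $H^s_k$ exist and agree with $\mathcal{H}^s_{2^{-k}}$ by Corollary~\ref{WKL-meas}. We also have $\DMM$ available by Corollary~\ref{cor:DMMin_ACA}.

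\emph{Step 1 (a local identity).} For any code $E$ and any $\tau$ of length $n$, the only admissible string of length exactly $n$ inside $N_\tau$ is $\tau$ itself. Hence we expect
\[
H^s_n(E_\tau) = \min\{2^{-ns},\, H^s_{n+1}(E_\tau)\}.
\]
Moreover, $H^s_{n_0}(E)$ for $n_0 \le n$ should be a fixed function of the finitely many values $H^s_n(E_\tau)$, $|\tau| = n$. To see this, split an optimal $2^{-n_0}$-cover into its strings of length $< n$ and the rest. The strings of length $<n$ cover whole unions of length-$n$ cylinders. The rest splits by which $N_\tau$ they lie in, and the part inside $N_\tau$ costs at least $H^s_n(E_\tau)$, and this lower bound is achievable up to $\epsilon$. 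So $H^s_{n_0}(E)$ is the minimum, over antichains $A$ of strings with lengths in $[n_0,n)$, of
\[
W_s(A) + \sum\{ H^s_n(E_\tau) : |\tau| = n,\ \tau \text{ extends no element of } A\}.
\]
Consequently, if $Y_\tau$ and $Z_\tau$ have the same $H^s_n$-value for every $\tau$ of length $n$, then $H^s_{n_0}(Y) = H^s_{n_0}(Z) \ge c$. I would verify this identity using the finite $\tilde H$ approximations and passing to limits, which is legitimate in $\ACA_0$.

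\emph{Step 2 (thinning each bad piece).} Enlarge $\nu$ along $Z$ so that $\nu = Z^{\le m}$ for some $m \ge n$. Fix $\tau$ of length $n$ with $H^s_{n+1}(Z_\tau) > 2^{-ns}$. Consider the closed set of codes $W \in \mathcal{S}^{2^{-ns}}_{n+1}$ (relative to the tree of $Z_\tau$) that extend the string $\nu_\tau$, the restriction of $\nu$ to strings compatible with $\tau$. This set is nonempty because it contains $Z_\tau$. By Lemma~\ref{H^s_n-inf}, $2^{-ns}$ is an infimum of $H^s_{n+1}$ there and the function is dense above it. Restricting to the basic neighborhood of $\nu_\tau$ preserves both properties, just as the tree $T_\nu$ was used in the proof of Theorem~\ref{thm:DMMin}. $\DMM$ therefore yields $W_\tau \supseteq \nu_\tau$ with $H^s_{n+1}(W_\tau) = 2^{-ns}$. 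For the good $\tau$, put $W_\tau = Z_\tau$.

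Let $Y$ be the union of the finitely many codes $W_\tau$. Then $Y$ codes a subtree of $T_F$, agrees with $\nu$ below level $m$, and is $n$-thin by construction. By Step~1, $H^s_n(Y_\tau) = 2^{-ns} = H^s_n(Z_\tau)$ for each bad $\tau$, and the values are unchanged for good $\tau$. So $H^s_{n_0}(Y) \ge c$, i.e.\ $Y \in N_\nu \cap \mathcal{S}^c_{n_0} \cap \mathcal{T}_n$.

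The main obstacle is Step~1: establishing rigorously, with the finite-stage covers defining $\tilde H^s_k$, that $H^s_{n_0}$ decomposes through the values $H^s_n(E_\tau)$. The delicate part is that optimal covers at different stages $m$ may choose different antichains $A$, so one must take the minimum over the finitely many antichains and commute it with the limit in $m$. A lesser point is checking that gluing the $W_\tau$ yields a valid element of $[S_{T_F}]$ extending $\nu$, which is routine since the pieces live on disjoint cylinders.
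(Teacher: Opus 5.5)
Your proof is correct. The construction of $Y$ is the same as the paper's: keep the $n$-thin pieces, and use Lemma~\ref{H^s_n-inf} with $\DMM$ inside $N_{\nu_\tau}$ to cut each bad piece down to exactly $H^s_{n+1}=2^{-ns}$. Where you differ is in verifying that $Y\in\mathcal{S}^c_{n_0}$.

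\textbf{The paper's verification.} It works stage by stage. It takes an optimal prefix-free cover $U$ for $\tilde H^s_{n_0}(Y^{\le m})$. It then replaces the proper extensions of each bad $\sigma$ in $U$ by $\sigma$ itself. This cannot raise the weight, because those extensions form a valid cover for $\tilde H^s_{n+1}(Y_\sigma^{\le m})\ge 2^{-ns}$. Finally it checks that the new cover is valid for $Z^{\le m}$. This gives $\tilde H^s_{n_0}(Y^{\le m})\ge \tilde H^s_{n_0}(Z^{\le m})\ge c$, with no interchange of limits, and only an inequality.

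\textbf{Your verification.} You prove a structural fact instead. For every $m\ge n$ one has exactly
\[
\tilde H^s_{n_0}(E^{\le m})=\min_A\Big[W_s(A)+\sum_{|\tau|=n,\ \tau\text{ extends no element of }A}\tilde H^s_n(E_\tau^{\le m})\Big].
\]
This follows by splitting any valid cover at length $n$ and, conversely, gluing $A$ to optimal local covers. You also have $\tilde H^s_n(E_\tau^{\le m})=\min\{2^{-ns},\tilde H^s_{n+1}(E_\tau^{\le m})\}$ for $m\ge n+1$. The index set of $A$'s is finite and independent of $m$, so the minimum commutes with the limit in $m$. The ``delicate part'' you flag is therefore harmless once you work at the finite stages, where the identity is exact rather than up to $\epsilon$.

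\textbf{Trade-off.} Your route costs this extra bookkeeping but gives more. You get the equality $H^s_{n_0}(Y)=H^s_{n_0}(Z)$, not just $\ge c$. It also explains why the thinning is harmless: at scale $2^{-n}$ a bad piece is already charged only $2^{-ns}$. As a side difference, you run $\DMM$ relative to the tree of $Z_\tau$ rather than $T_F$ as the paper does, so you also get $Y\subseteq Z$. In the paper's version $Y_\sigma$ is merely a subtree of $T_F$ extending $\nu_\sigma$.
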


\begin{proof}
Let $N_\nu$ be such that $N_\nu \cap \mathcal{S}^c_{n_0} \neq \emptyset$; we wish to exhibit an element $Y \in N_\nu \cap \mathcal{S}^c_{n_0} \cap \mathcal{T}_n$. 

Choose an element $Z \in N_\nu \cap \mathcal{S}^c_{n_0}$. By extending $\nu$ along $Z$ if necessary, we may assume that $\nu$ is long enough for $\nu^{\leq n}$ to be defined. For each $\sigma \in 2^{<\N}$ of length $n$, let $\nu_{\sigma}$ denote the finite string with $|\nu_{\sigma}| = |\nu|$ and which has $\nu_{\sigma}(\tau) = 1$ if and only if $\nu(\tau) = 1$ and $\tau$ is compatible with $\sigma$. Note that for each such $\sigma$, we have $Z_{\sigma} \supseteq \nu_{\sigma}$.

\medskip
We split the proof into two main steps.

\medskip
\textit{Step 1:} Define an element $Y \in N_{\nu} \cap \mathcal{T}_n$.

\smallskip
We will define an element $Y \in [S_{T_F}]$ by specifying $Y_{\sigma}$ for each $\sigma \in 2^{<\N}$ of length $n$, and ensuring that each $Y_{\sigma}$ extends $\nu_\sigma$. In this way, the final sequence $Y$ will be well-defined and will extend $\nu$. If $\tau \in 2^{<\N}$ is in the domain of $\nu$, then $\tau$ is also in the domain of $\nu_\sigma$ for all length-$n$ strings $\sigma$. Thus for any length-$n$ $\sigma_i$ which is compatible with $\tau$ (either a prefix or extension, depending on the length of $\tau$), we will have

$$Y(\tau) = Y_{\sigma_i}(\tau) = \nu_{\sigma_i}(\tau) = \nu(\tau).$$

If $\tau \in 2^{<\N}$ is not in the domain of $\nu$, then we must have $|\tau| > n$, so the value of $Y(\tau)$ will be determined uniquely by $Y_{\tau \upharpoonright_n}(\tau)$.

It is straightforward to show that $Y$ will be an element of $[S_{T_F}]$ as long as each $Y_{\sigma}$ is. Additionally, as long as $\ACA_0$ can prove the existence of each $Y_\sigma$, we can also deduce the existence of the overall element $Y$, which is $\Delta^0_1$-definable from the (finite) collection of all $Y_\sigma$.

We will define the elements $Y_\sigma$ according to two cases. Suppose first that $\sigma$ is such that $Z$ is $n$-thin for $\sigma$, i.e. $H^s_{n+1}(Z_\sigma) \leq 2^{-ns}$.
Define $Y_\sigma = Z_\sigma$ for all such $\sigma$. Note that $\nu_{\sigma} \subseteq Z_{\sigma}$ so our desired condition is satisfied. 

For all other $\sigma \in 2^{<\N}$ of length $n$, it must be the case that $H^s_{n+1}(Z_\sigma) > 2^{-ns}$, which implies that $Z_{\sigma}$ is an element of $\mathcal{S}_{n+1}^{2^{-ns}}.$ By Lemma \ref{H^s_n-inf}, we know that the function $H^s_{n+1}$ is dense above $2^{-ns}$ over $\mathcal{S}_{n+1}^{2^{-ns}}$. As $\mathcal{S}_{n+1}^{2^{-ns}} \cap N_{\nu_{\sigma}}$ is nonempty (it at least contains $Z_\sigma$), we can actually conclude that $H^s_{n+1}$ is dense above $2^{-ns}$ over the closed set $\mathcal{S}_{n+1}^{2^{-ns}} \cap N_{\nu_{\sigma}}$ as well.
Since we have access to $\ACA_0$, we may apply the Dense Monotone Minimum principle (see Theorem \ref{thm:DMMin}) to find an element $Y_{\sigma} \in \mathcal{S}_{n+1}^{2^{-ns}} \cap N_{\nu_\sigma}$ which satisfies $H^s_{n+1}(Y_\sigma) = 2^{-ns}$.

As $Z$ codes an infinite tree, by the definition of the $Z_\sigma$ and the pigeonhole principle, for at least one length-$n$ string $\sigma$, $Z_\sigma$ codes an infinite tree, too, and by the construction above this applies to $Y_\sigma$ as well. Thus, the overall sequence Y is a valid
element of $[S_{T_F}]$. Moreover, by construction the sequence $Y$ extends $\nu$ and is $n$-thin. 

\medskip
\textit{Step 2:} Show that $Y$ is an element of $\mathcal{S}_{n_0}^c$.

\smallskip
By the monotonicity of the function $\tilde{H}^s_{n_0}$, it suffices to show that $\tilde{H}^s_{n_0}(Y^{\leq m}) \geq c$ for all $m \geq n \geq n_0$. Let such an $m$ be arbitrary, and suppose $U$ is a cover witnessing $\tilde{H}^s_{n_0}(Y^{\leq m})$. We wish to show that $W_s(U) \geq c$. 

We may assume that $U$ is prefix-free. We wish to replace $U$ with a cover $U'$ which is valid for computing $\tilde{H}^s_{n_0}(Z^{\leq m})$ and for which $W_s(U') \leq W_s(U).$ Since $Z$ was an element of $\mathcal{S}_{n_0}^c$, this would yield $W_s(U) \geq W_s(U') \geq c$. This would imply that $Y \in \mathcal{S}_{n_0}^c$ since the cover $U$ was arbitrary.

Suppose we have a string $\sigma \in 2^{<\N}$ of length $n$ for which $H^s_{n+1}(Z_{\sigma}) > 2^{-ns}$. If $\sigma$ has no proper extensions in $U$, then we do nothing. Otherwise, delete all of these extensions and replace them with $\sigma$.

We claim that this action will not increase the $s$-weight of the cover. Indeed, let $V_{\sigma}$ denote the (nonempty) set of proper extensions of $\sigma$ in $U$. Note that all strings in $V_{\sigma}$ have length between $n+1$ and $m$, inclusive. Also, if $\tau$ is such that $|\tau| = m$ and $Y_{\sigma}(\tau) = 1$, then $\tau$ must extend some element of $V_{\sigma}$. Given that the original set $U$ was a cover for $Y$, we know that $\tau$ extends some element $\mu \in U$, which must be compatible with $\sigma$. If $\mu \subseteq \sigma$, then $\mu$ would be a proper prefix of any element in $V_{\sigma}$, contradicting the fact that $V_{\sigma}$ was nonempty and $U$ was prefix-free. Therefore, we must have $\mu \supsetneq \sigma$, and thus $\mu \in V_{\sigma}.$

Taken together, we see that $V_{\sigma}$ is a valid cover for computing $\tilde{H}^s_{n+1}(Y_{\sigma}^{\leq m})$. Since $\sigma$ was such that $H^s_{n+1}(Z_{\sigma}) > 2^{-ns}$, the construction of $Y$ yields that $H^s_{n+1}(Y_\sigma) = 2^{-ns}$, and thus $W_s(V_\sigma) \geq 2^{-ns}$. Therefore, replacing the strings in $V_{\sigma}$ with a string $\sigma$ having length $n$ cannot increase the $s$-weight of $U.$

Let $U'$ be the cover obtained from applying this replacement process to all length-$n$ strings $\sigma$ for which $\tilde{H}^s_{n+1}(Z_\sigma) > 2^{-ns}$. It remains to verify that $U'$ is a valid cover for computing $\tilde{H}^s_{n_0}(Z^{\leq m})$. For the length condition, note that we only added strings of length $n$, and we assumed that $n_0 \leq n \leq m$. Thus our string lengths remain within the proper bounds.

For the second condition, suppose we have a string $\tau$ with $|\tau| = m$ and $Z(\tau) = 1$. Let $\sigma$ be the unique length-$n$ prefix of $\tau$. If $\sigma$ satisfies $H^s_{n+1}(Z_\sigma) \leq 2^{-ns}$, then from our construction we know that $Y_{\sigma} = Z_{\sigma}$, and thus $Y_{\sigma}(\tau) = Z_{\sigma}(\tau) = 1$. As the original cover $U$ was valid for computing $\tilde{H}^s_{n_0}(Y^{\leq m})$, this implies that $U$ contains some prefix of $\tau$, which will be compatible with $\sigma$. As $\sigma$ was not one of the strings considered in our replacement process, this compatible prefix $\tau$ will still be in $U'$.

Suppose instead that $\sigma$ satisfies $H^s_{n+1}(Z_\sigma) > 2^{-ns}$. Once again, we know that the original cover $U$ was valid for computing $H^s_{n_0}(Y^{\leq m})$, and thus for computing $H^s_{n_0}(Y_\sigma^{\leq m})$ as well. Given that $\tilde{H}^s_{n+1}(Y_\sigma^{\leq m}) \geq 2^{-ns}$ (from the construction of $Y$) there must be at least one $\tau'$ of length $m$ for which $Y_{\sigma}(\tau') = 1$, so this $\tau'$ has a prefix $\mu \in U$, compatible with $\sigma$. If $\mu \subseteq \sigma$, then $\mu$ is also a prefix of $\tau$ and was not removed in passing to $U'$. If instead $\mu \supseteq \sigma$, then $\mu$ was removed and replaced with $\sigma$.  Either way, we determine that $U'$ contains a prefix of $\tau$, which completes the argument that $U'$ is a valid cover for computing $\tilde{H}^s_{n_0}(Z^{\leq m})$, as desired. 
\end{proof}

We are now ready to complete the proof of our desired density result.

\begin{proposition} \label{BD-ACA}  $\mathsf{BD}$ is provable in $\ACA_0.$
\end{proposition}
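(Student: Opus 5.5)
We argue by induction on $n$, working in $\ACA_0$. Fix $n_0$ with $\mathcal{S}^c_{n_0}$ nonempty and $d>c$. As explained before Lemma~\ref{T-U_n}, with $\WKL_0$ available the statement ``$\mathcal{U}^d_n$ is dense in $\mathcal{S}^c_{n_0}$'' is equivalent to the arithmetical formula
\[
\forall \nu\,\bigl(N_\nu\cap\mathcal{S}^c_{n_0}\neq\emptyset \rightarrow \exists\tau\supseteq\nu\,(N_\tau\cap\mathcal{S}^c_{n_0}\neq\emptyset\wedge \tau\notin T^d_n)\bigr),
\]
with $n$ as its free variable. Arithmetical induction in $\ACA_0$ therefore applies to it.

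\emph{Cases $n\le n_0$.} These need no induction. Lemma~\ref{BD-fixed} gives that $\mathcal{U}^d_{n_0}$ is dense in $\mathcal{S}^c_{n_0}$. The remark following it gives density of $\mathcal{U}^d_k$ for all $k\le n_0$, since $H^s_k(Y)\le H^s_{n_0}(Y)$.

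\emph{Inductive step, $n\ge n_0$.} Assume $\mathcal{U}^d_n$ is dense in $\mathcal{S}^c_{n_0}$, and let $\nu$ satisfy $N_\nu\cap\mathcal{S}^c_{n_0}\neq\emptyset$.
\begin{enumerate}[(1)]
\item Apply the inductive hypothesis to get $Z\in N_\nu\cap\mathcal{S}^c_{n_0}\cap\mathcal{U}^d_n$. Since $\mathcal{U}^d_n$ is open, choose an initial segment $\nu'\subseteq Z$ with $\nu'\supseteq\nu$ and $N_{\nu'}\subseteq\mathcal{U}^d_n$.
\item Note that $N_{\nu'}\cap\mathcal{S}^c_{n_0}$ is nonempty, as it contains $Z$.
\item Apply Lemma~\ref{T-dense}, which is valid because $n\ge n_0$, to get $Y\in N_{\nu'}\cap\mathcal{S}^c_{n_0}\cap\mathcal{T}_n$.
\item Since $Y\in N_{\nu'}$, we have $Y\in\mathcal{U}^d_n$, so $Y\in\mathcal{U}^d_n\cap\mathcal{T}_n$.
\item By Lemma~\ref{T-U_n} (using $d>c\ge0$, so $d>0$), $Y\in\mathcal{U}^d_{n+1}$.
\end{enumerate}
Thus $Y\in N_\nu\cap\mathcal{S}^c_{n_0}\cap\mathcal{U}^d_{n+1}$, which is the density of $\mathcal{U}^d_{n+1}$ in $\mathcal{S}^c_{n_0}$.

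The key point in step (1) is passing to a cylinder $N_{\nu'}$ inside $\mathcal{U}^d_n$. Lemma~\ref{T-dense} only gives a thin element near $Z$, not $Z$ itself, so we need the whole neighbourhood to lie in $\mathcal{U}^d_n$ for the thin element to stay there.

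The main obstacle is formal rather than mathematical: the induction must be carried out on an arithmetical formula. This needs two things:
\begin{itemize}
\item each instance ``$N_\tau\cap\mathcal{S}^c_{n_0}\neq\emptyset$'' must be rendered as the $\Pi^0_1$-in-$T^c_{n_0}$ statement that $\tau$ has extensions in $T^c_{n_0}$ of every length;
\item the trees $T^c_{n_0}$ and $T^d_n$ must exist uniformly in $n$ as a single set parameter.
\end{itemize}
Both are available in $\ACA_0$, since $\tilde{H}^s_n$ is uniformly computable and $\WKL_0$ converts ``infinite subtree'' into ``has a path.'' Once this is in place, the induction gives density of $\mathcal{U}^d_n$ for every $n\ge n_0$, and the case $n<n_0$ is covered above.
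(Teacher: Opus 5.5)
Your proposal is correct and is essentially the paper's own proof: the cases $n\le n_0$ come from Lemma~\ref{BD-fixed} and the remark after it. The inductive step then takes $Z$ from the hypothesis, shrinks to a cylinder $N_{\nu'}\subseteq\mathcal{U}^d_n$, applies Lemma~\ref{T-dense} to get an $n$-thin $Y$ in that cylinder, and concludes with Lemma~\ref{T-U_n}, with the induction justified by the same arithmetical rephrasing of density (and you correctly write $\mathcal{S}^c_{n_0}$ where the paper's proof has the typo $\mathcal{S}^d_{n_0}$).
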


\begin{proof}
 By Lemma \ref{BD-fixed} and the remark that followed, we have that $\mathcal{U}_n^d$ is dense in $\mathcal{S}^c_{n_0}$ for all $n \leq n_0$. Assume inductively that $\mathcal{U}_n^d$ is dense in $\mathcal{S}^c_{n_0}$ for some $n \geq n_0.$ Let $\tau \in 2^{<\N}$ be such that $N_\tau \cap \mathcal{S}_{n_0}^c \neq \emptyset$; we wish to exhibit an element $Y \in N_\tau \cap \mathcal{S}^c_{n_0} \cap \mathcal{U}^d_{n+1}$. 

First, we may apply the inductive hypothesis to choose $Z \in N_{\tau} \cap \mathcal{S}_{n_0}^c \cap \mathcal{U}_n^d$. Since $\mathcal{U}_n^d$ is open, we may choose $\tau' \in 2^{<\N}$ such that $\tau \subseteq \tau' \subseteq Z$ and $N_{\tau'} \subseteq \mathcal{U}_n^d$. Then since $N_{\tau'} \cap \mathcal{S}_{n_0}^d$ is nonempty (it at least contains $Z$), by Lemma \ref{T-dense} we can find $Y \in N_{\tau'} \cap \mathcal{S}_{n_0}^d$ with $Y \in \mathcal{T}_n$. By choice of $\tau'$, we have $Y \in \mathcal{T}_n\cap \mathcal{U}_n^d$, so by Lemma \ref{T-U_n} we have $Y \in \mathcal{U}^d_{n+1}$. This proves the density of $\mathcal{U}^d_{n+1}$ in $S_{n_0}^d$, completing the inductive step. We can then use induction on arithmetic formulas in $\ACA_0$ to conclude that $\mathcal{U}_n^d$ is dense in $\mathcal{S}^c_{n_0}$ for all $n \in \N$, as desired. 
\end{proof}

While we have already outlined the argument, we will now formally demonstrate the proof of Besicovitch's Theorem (according to the formulation in Theorem \ref{Bes}) in $\ACA_0$.

\begin{theorem} \label{Bes-ACA} Besicovitch's Theorem $(\BES)$ is provable in $\ACA_0$.
\end{theorem}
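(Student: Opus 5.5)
We work in $\ACA_0$ and combine $\BCTC$ (Theorem~\ref{thm:BCTC}) with Proposition~\ref{BD-ACA}, as outlined before Lemma~\ref{BD-fixed}. Let $F$, $Z_F$, $c$, $s$ be as in $\BES$. In $\ACA_0$ the quantities $H^s_n(Z)$ and $\lim_n H^s_n(Z)$ exist (possibly infinite, expressed arithmetically).

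\emph{Case $c>0$.} Since the sequence $H^s_n(Z_F)$ is nondecreasing in $n$ with limit $\geq c$, there are two possibilities. If some $n_0$ has $H^s_{n_0}(Z_F)\geq c$, then $Z_F\in\mathcal{S}^c_{n_0}$, so $\mathcal{S}^c_{n_0}$ is nonempty. If not, then $H^s_n(Z_F)<c$ for all $n$ and the limit equals $c$ exactly, so $E=F$ already works. In the first case we fix $d_n=c+2^{-n}$. By $\mathsf{BD}$, each $\mathcal{U}^{d_n}_n$ is dense in the standard closed set $\mathcal{S}^c_{n_0}=[T^c_{n_0}]$. The tree $T^c_{n_0}$ exists by arithmetic comprehension: it is even $\Delta^0_1$ in $Z_F,c$ once we use the $\geq_m$ formulation of Definition~\ref{S^c_F}. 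The codes for the $\mathcal{U}^{d_n}_n$ are uniformly arithmetic in $n$, so the sequence exists. Applying $\BCTC$ with the basic neighbourhood $N_{\langle\rangle}$ gives $Z\in\mathcal{S}^c_{n_0}\cap\bigcap_n\mathcal{U}^{d_n}_n$.

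We now verify the measure. For all $n\geq n_0$, monotonicity in $n$ gives $H^s_n(Z)\geq H^s_{n_0}(Z)\geq c$. We also have $H^s_n(Z)<c+2^{-n}$, since $Z\in\mathcal{U}^{d_n}_n$. Hence $\lim_n H^s_n(Z)=c$. Because $c>0$, the tree coded by $Z$ is infinite, so $E_Z$ is nontrivial. It is a subset of $F$ because $Z\in[S_{T_F}]$.

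\emph{Case $c=0$.} We need a nontrivial $E\subseteq F$ with $\lim_n H^s_n(Z_E)=0$. The plan is to take the leftmost path $X$ of the pruned subtree of $T_F$, which exists by Theorem~\ref{ACA-T}, and let $E=\{X\}$. This $E$ is coded by a tree that is infinite, and $H^s_n$ of a singleton is at most $2^{-sm}$ for every $m\geq n$, hence equals $0$. This uses $s>0$.

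\emph{Main obstacle.} The delicate point is formalization. One must check that the hypotheses of $\BCTC$ are met with arithmetically defined objects, and that ``dense'' in $\mathsf{BD}$ coincides with the sense required by $\BCTC$. One must also handle the fact that the limit in the hypothesis may be attained only in the limit, which is the case split above. Everything else reduces to Proposition~\ref{BD-ACA} and Theorem~\ref{thm:BCTC}.
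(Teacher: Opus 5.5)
Your proposal is correct and follows the paper's proof. The paper also disposes of the case where the limit equals $c$ by taking $E=F$; otherwise it finds $n_0$ with $Z_F\in\mathcal{S}^c_{n_0}$, invokes $\mathsf{BD}$ with $d_n=c+2^{-n}$, and applies $\BCTC$ to get $Z\in\mathcal{S}^c_{n_0}\cap\bigcap_n\mathcal{U}^{d_n}_n$.

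Your separate singleton argument for $c=0$ is a genuine improvement. The paper never checks that $E_Z$ is nontrivial, and for $c=0$ the $\BCTC$ witness may code a finite tree: the all-zero code already lies in $\mathcal{S}^0_{n_0}\cap\bigcap_n\mathcal{U}^{d_n}_n$. For $c>0$, nontriviality follows from $H^s_{n_0}(Z)\ge c>0$, exactly as you observe.
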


\begin{proof}
    First, if $\mathcal{H}^s(F) = \lim_{n \to \infty}H^s_n(Z_F) = c$, then the result is trivial, since we can just take our witnessing subset $E$ to be $F$ itself (with appropriate codes). So assume that $\mathcal{H}^s(F)$ (which may be infinite) is strictly greater than the target measure $c$. In that case, there must be some $n_0 \in \N$ for which $H^s_{n_0}(Z_F) \geq c$. In that case, the closed set $\mathcal{S}^c_{n_0}$ will be nonempty, since it will at least contain the element $Z_F$.

    Now, let $(d_n)_{n \in \N}$ be a strictly decreasing sequence of real numbers which converges to $c$. (For example, we may take $d_n = c + 2^{-n}$.) As each $d_n$ is strictly greater than $c$, it follows from Proposition \ref{BD-ACA} that each open set $\mathcal{U}^{d_n}_{n}$ is dense in $\mathcal{S}^c_{n_0}$, and that this fact is provable in $\ACA_0$. We can then apply $\BCTC$ (also provable in $\ACA_0$) to obtain an element $Z \in \mathcal{S}^c_{n_0} \cap \bigcap_{n \in \N}\mathcal{U}^{d_n}_{n}$. We claim that for the associated closed set $E_Z \subseteq F$, we have
    $$\mathcal{H}^s(E_Z) = \lim_{n \to \infty}H^s_n(Z) = c,$$
    implying that $E_Z$ is our desired witnessing subset.

    First, since $Z \in \mathcal{S}^c_{n_0}$, we know that $H^s_{n_0}(Z) \geq c$, and thus $\mathcal{H}^s(E_Z) \geq c$ since the quantity $H^s_n(Z)$ is monotonically increasing in $n$.  To prove the opposite inequality, let $\epsilon > 0$ be arbitrary, and choose $N \in \N$ so large that $d_n < c + \epsilon $ for all $n \geq N$. For all such $n$, we know that $Z \in \mathcal{U}^{d_n}_n$, and thus 
        $$H^s_{n}(Z) < d_n < c + \epsilon.$$
    As this bound applies to all $n \geq N$, we can conclude that
    $$\mathcal{H}^s(E_Z) = \lim_{n \to \infty}H^s_{n}(Z) \leq c + \epsilon.$$
    But then since $\epsilon > 0$ was arbitrary, we must in fact have $\mathcal{H}^s(E_Z) \leq c$, and therefore $\mathcal{H}^s(E_Z) = c$ when combined with our earlier inequality. This completes the proof.
\end{proof}

Having shown that $\ACA_0$ implies Besicovitch's theorem, it is natural to wonder whether the converse is true. After all, the $\BCTC$ result was integral to this particular proof, and $\BCTC$ itself is equivalent to $\ACA_0$ as shown in Section~\ref{ssec:bct}. While we have not yet determined whether this is the case, the results in the previous section at least demonstrate that Besicovitch - as formalized in Proposition \ref{Bes} - does not hold in $\RCA_0$. We can obtain similar lower bounds by changing the representations of the closed sets involved.

\begin{proposition}
    Over $\RCA_0$, Besicovitch's Theorem implies $\WWKL_0$, as does the analog where the sets $F$ and $E$ are pruned closed. If $F$ is standard closed but $E$ is required to be pruned closed, then the theorem implies $\WKL_0$.
\end{proposition}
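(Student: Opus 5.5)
The plan is to reduce each of the three versions of $\BES$ to the corresponding regularity principle from Section~\ref{ssec:regularity}, using $s=1$. The key observation is that for $s = 1$ the $\delta$-measures do not depend on $\delta$. Concretely, at the level of the functions $\tilde{H}^1_n$, a cover in Case 1 may always be refined to strings of length exactly $m$ without changing the weight. Moreover, when $n \le m$ every admissible cover has weight at least $2^{-m}\cdot|\{\sigma : |\sigma| = m,\ Z(\sigma)=1\}|$, while the set of those length-$m$ strings is itself admissible. Hence $\tilde{H}^1_n(Z^{\leq m}) = \tilde{H}^1_0(Z^{\leq m})$ for all $m \ge n$, and so, in the sense of the monotone-limit conventions of Section~\ref{ssec:monotone}, $H^1_n(Z) = H^1_0(Z)$ for every $n$ and every tree code $Z$. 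I will check this computation first, since it is the formalization point everything rests on. It is a finite combinatorial fact about each string, so it is provable in $\RCA_0$.

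For the $\WWKL_0$ lower bound in the standard version, let $T$ be a tree of positive measure in the sense of $\WWKL_0$, with code $Z$. Exactly as in the proof of Proposition~\ref{reg-rev}(ii), there is a rational $c>0$ with $\tilde{H}^1_0(Z^{\le m}) \ge c$ for all $m$. By the observation above, $c \le \lim_n H^1_n(Z)$ (interpreted via the usual abuse of notation). Applying $\BES$ then yields a nontrivial standard closed $E\subseteq [T]$ whose code $Z_E$ satisfies $\lim_n H^1_n(Z_E) = c$, that is, $H^1_0(Z_E) = c$. Lemma~\ref{lem:path-from-comp-meas} now gives an element of $E$, and hence a path through $T$. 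I note that Lemma~\ref{lem:path-from-comp-meas} is stated for $\omega$-models. If a full $\RCA_0$ statement is intended, I would need to redo that lemma in $\RCA_0$, by the usual argument that a tree of computable positive measure has a path computable from the measure. This is the step I expect to be the main obstacle, and I would first try to check that the cited proof in \cite{Gruner:2026a} only uses $\Sigma^0_1$-induction.

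For the pruned version, I would reuse the construction of $T'$ from Proposition~\ref{reg-rev}(iii). Since $T'$ is pruned, contains $T$, and has positive measure, we may apply pruned $\BES$ with $s=1$ and target $c$ to obtain a pruned $E'\subseteq[T']$ with $H^1_0(Z_{E'}) = c$. Intersecting with $T$ gives $Z_E$. The counting argument of that proof shows $H^1_0(Z_E) = c$, and Lemma~\ref{lem:path-from-comp-meas} again produces a path through $T$.

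Finally, for standard $F$ and pruned $E$, let $T$ be any infinite tree. Apply the theorem with $c = 0$, or with any $c$ in range; nontriviality of $E$ is part of the conclusion. The output is an infinite pruned tree whose paths lie in $[T]$. Its leftmost path is computable from the pruned tree, exactly as in the reversal of Proposition~\ref{WKL-reg}, so $[T]\neq\emptyset$ and $\WKL_0$ follows.
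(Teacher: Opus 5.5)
Your proposal is correct and is essentially the paper's proof: specialize to $s=1$, observe that $H^1_n$ does not depend on $n$ (your check that $\tilde{H}^1_n(Z^{\leq m})=\tilde{H}^1_0(Z^{\leq m})$ for $m\ge n$ supplies the detail the paper only asserts), and then rerun the reversals of Proposition~\ref{reg-rev} together with the leftmost-path argument of Proposition~\ref{WKL-reg}. The caveat you raise about Lemma~\ref{lem:path-from-comp-meas} being stated only for $\omega$-models applies equally to the paper, whose proof also rests on those $\omega$-model reversals. So, strictly, both arguments give the $\WWKL_0$ implications only for $\omega$-models unless that lemma is formalized in $\RCA_0$; the $\WKL_0$ case via leftmost paths goes through in $\RCA_0$ as is.
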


\begin{proof}
Assume that Besicovitch's Theorem holds (or one of its variants), and suppose we apply it in the case where $s = 1$. In this case, $H^1_n$ will compute the same function for every $n \in \N$, so for any closed set $F$ with tree code $Z_F$, we will just have $\mathcal{H}^1(F) = H^1_0(Z_F)$. We can then apply the same reversal proofs as were used for $\mathsf{Reg}$-S, $\mathsf{Reg}$-P, and $\mathsf{Reg}$-SP in Proposition \ref{reg-rev}.
\end{proof}

We conclude this section with an examination of Besicovitch's Theorem from the perspective of computability theory. 

Suppose $Z_F \in 2^{\N}$ is any tree code representing a nontrivial standard closed set, and suppose that our real number parameter $s$ and our target measure $c$ are both computable. As Besicovitch's theorem holds in the $\ACA_0$-model
\[
\{X \in 2^{\omega}: X \leq_T Z_F^{(n)} \text{ for some } n \in \omega \},
\]
we can conclude that there exists a witnessing subset $E \subseteq F$ whose tree code $Z_E$ is computable from $Z_F^{(n)}$ for some $n \in \N$. What is the best bound on $n$? Since Besicovitch's Theorem is not provable in $\RCA_0$, we know $n$ is not $0$. By examining the computability theoretic techniques used in previous arguments, we can deduce that a single jump is always sufficient.

\begin{theorem}
Let $Z_F \in 2^{\omega}$ be a tree code representing a nontrivial standard closed set $F$, and suppose that $c, s \geq 0$  are computable real numbers such that 

    $$c \leq \lim_{n \to \infty}H^s_{2^{-n}}(Z_F).$$

    Then there exists a tree code $Z_E \in 2^{\omega}$ for a nontrivial standard closed subset $E \subseteq F$ such that $Z_E \leq_T Z_F'$ and

    $$\lim_{n \to \infty}H^s_n(Z_E) = c.$$    
\end{theorem}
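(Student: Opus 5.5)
We follow the $\ACA_0$ proof of Theorem~\ref{Bes-ACA}, but this time track how much computational power each step actually uses. The aim is a $Z_F'$-computable construction of an element $Z \in \mathcal{S}^c_{n_0} \cap \bigcap_n \mathcal{U}^{d_n}_n$ with $d_n = c + 2^{-n}$. This is the ``pruned'' proof of $\BCTC$-III, run with an oracle that can decide emptiness questions.

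First, the trivial case. If $\lim_n H^s_n(Z_F) = c$, take $Z_E = Z_F$. Otherwise we fix (non-uniformly) an $n_0$ with $H^s_{n_0}(Z_F) \geq c$, and in fact we may take $H^s_{n_0}(Z_F) > c$; finitely much information is harmless. The tree $T^c_{n_0}$ is $Z_F$-computable, since $c$ and $s$ are computable.

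The key observation is that the relation ``$N_\nu \cap \mathcal{S}^c_{n_0} \neq \emptyset$'' is $\Pi^0_1(Z_F)$. By compactness it is equivalent to
\[
\forall m\, \exists \tau \supseteq \nu\, (|\tau| = m \wedge \tau \in T^c_{n_0}),
\]
with the inner quantifier bounded. There is one subtlety: membership in $T^c_{n_0}$ involves the inequality $\tilde H \geq c$ between a computable real and $c$, which is only co-c.e. That keeps the whole predicate $\Pi^0_1$, so it is decidable from $Z_F'$. Hence $Z_F'$ computes the pruned tree $\tilde T$ of $\mathcal{S}^c_{n_0}$, exactly as in the proof of Theorem~\ref{ACA-T}(i).

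Next we run the recursion from the proof of $\BCTC$-III. We build $\sigma^* = \langle\rangle \subsetneq \sigma_0 \subsetneq \sigma_1 \subsetneq \cdots$ with $\sigma_n \in \tilde T$ and $N_{\sigma_n} \subseteq \mathcal{U}^{d_n}_n$. The latter condition holds when $\tilde H^s_n(\rho) <_m d_n$ for some $\rho \subseteq \sigma_n$ and $m \leq |\sigma_n|$, and this is a computable condition on $\sigma_n$. At stage $n$ we search for the least $\tau \supsetneq \sigma_{n-1}$ that satisfies this condition and lies in $\tilde T$. Testing a candidate needs only the oracle $Z_F'$.

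The search terminates because of $\mathsf{BD}$ (Proposition~\ref{BD-ACA}), read in the full $\omega$-model, where it is simply true. Since $\sigma_{n-1} \in \tilde T$, there is $Y \in N_{\sigma_{n-1}} \cap \mathcal{S}^c_{n_0} \cap \mathcal{U}^{d_n}_n$. A long enough initial segment of $Y$ then witnesses $\tilde H^s_n < d_n$ and lies in $\tilde T$.

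The limit $Z = \bigcup_n \sigma_n$ is then $Z_F'$-computable. It lies in $\mathcal{S}^c_{n_0}$ because all its initial segments are in $\tilde T$, and it lies in every $\mathcal{U}^{d_n}_n$. The final computation in the proof of Theorem~\ref{Bes-ACA} gives $\lim_n H^s_n(Z) = c$. The code $Z$ lies in $[S_{T_F}]$, so it codes a subtree of $T_F$. It is nontrivial when $c > 0$, because $H^s_{n_0}(Z) \geq c > 0$. When $c = 0$ we instead use the convention that a nontrivial tree code is required, and replace $n_0$ and $c$ by a small positive value that forces infiniteness. For example, we can demand in addition that $\sigma_n$ keep some string of every length up to $|\sigma_n|$; this is checkable from $Z_F'$ and compatible with density. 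Set $Z_E = Z$.

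The main obstacle is justifying the termination of the search. It rests entirely on the truth of $\mathsf{BD}$ (Lemmas~\ref{T-U_n} and~\ref{T-dense}) in the standard model. Those lemmas used $\DMM$ and arithmetical induction, but here we only need them as true statements, not as facts about $\omega$-models. A secondary delicate point is checking that the order conditions on reals (the $\geq_m$, $<_m$ comparisons with the computable $c$ and $d_n$) keep the emptiness test within $\Pi^0_1(Z_F)$.
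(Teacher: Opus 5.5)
Your proposal is correct and is essentially the paper's proof. $Z_F'$ computes the pruned tree of $\mathcal{S}^c_{n_0}$, since nonemptiness of $N_\nu\cap\mathcal{S}^c_{n_0}$ is $\Pi^0_1(Z_F)$, and the $\BCTC$-III recursion against the uniformly $Z_F$-computable codes of $\mathcal{U}^{d_n}_n$ terminates because $\mathsf{BD}$ is true in the standard model.

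Your care about nontriviality when $c=0$ goes beyond the paper, which passes over this case. Raising $c$ to a positive value would not work, but restricting to codes of trees with a node on every level does, as does simply taking $E=\{X\}$ for a $Z_F'$-computable path $X\in F$.

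Note that your argument, like the paper's, tacitly needs $s>0$, since $\mathsf{BD}$ rests on Lemma~\ref{H^s_n-inf}. For $s=0$ the values $H^0_n$ are integers, so for example $c=1/2$ cannot be attained.
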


\begin{proof}
    Let the closed set $\mathcal{S}^c_{n_0}$ be defined as in the proof of Theorem \ref{Bes-ACA}. Observe that the corresponding tree
    $$T^c_{n_0} = \{ \nu \in S_{T_F}: \tilde{H}^s_{n_0}(\nu) \geq c\}$$
    is computable from $Z_F$ (i.e. $T_F$). Therefore, the first jump $Z_F'$ is sufficient to compute the pruned subtree
    $$\tilde{T}^c_{n_0} = \{ \sigma \in T^c_{n_0}: \forall m \geq |\sigma|, \exists \tau \supseteq \sigma \text{ with } |\tau| = m \text{ and } \tau \in T^c_{n_0} \},$$
    since $\tilde{T}^c_{n_0}$ is $\Pi^0_1$-definable from $Z_F$. Note that $\tilde{T}^c_{n_0}$ still represents the same closed set $\mathcal{S}^c_{n_0}$, which we can now think of as pruned closed with this new tree code. 

    Now, consider the sequence of open sets $\{\mathcal{U}_n^{d_n}\}_{n \in \omega}$ from the proof of Theorem \ref{Bes-ACA}. Each set $\mathcal{U}_n^{d_n}$ has an open set code $V_n$ defined by $2^{\omega} \setminus T^{d_n}_n$, so these are all (uniformly) computable from $Z_F$. We know from Proposition \ref{BD-ACA} that each open set $\mathcal{U}_n^{d_n}$ is dense in $\mathcal{S}^c_{n_0}$. By following the proof of $\BCTC$-III as given in Proposition \ref{BCTC-II/III}, we can, in a computable way, construct an element $Z \in \mathcal{S}^c_{n_0} \cap \bigcap_{n \in \omega}\mathcal{U}_n^{d_n}$ using the pruned tree $\tilde{T}^c_n$ and the open codes $\{V_n\}_{n \in \omega}$. This element $Z$ is thus computable from $Z_F'$, and by the argument given in the proof of Theorem \ref{Bes-ACA}, the corresponding closed set will satisfy 
    \[
    \lim_{n \to \infty}H^s_n(Z_E) = c,
    \]
as desired.
\end{proof}

Of course, this leaves the question whether Besicovitch's Theorem is provable in a fragment weaker than $\ACA_0$, in particular $\WKL_0$. In this case, we would have an all low model and hence the full power of the Turing jump would not be needed to find a witnessing subset of finite measure.

\printbibliography

@article{simpson2009mass,
	author = {Simpson, Stephen G},
	journal = {Bulletin of Symbolic Logic},
	number = {4},
	pages = {385--409},
	publisher = {Cambridge University Press},
	title = {Mass problems and measure-theoretic regularity},
	volume = {15},
	year = {2009}}

@article{kjos2014finding,
	author = "Kjos-Hanssen, Bj{\o}rn and Reimann, Jan",
	journal = {arXiv preprint arXiv:1408.1999},
	title = {Finding subsets of positive measure},
	year = {2014}}

@book{simpson2009subsystems,
	author = {Simpson, Stephen G},
	publisher = {Cambridge University Press},
	title = {Subsystems of second order arithmetic},
	volume = {1},
	year = {2009}}

@article{brown1990notions,
	author = {Brown, Douglas K},
	journal = {Logic and computation (Pittsburgh, PA, 1987)},
	pages = {39--50},
	publisher = {Contemporary Mathematics},
	title = {Notions of closed subsets of a complete separable metric space in weak subsystems of second-order arithmetic},
	volume = {106},
	year = {1990}}

@phdthesis{yu1987measure,
	author = {Yu, Xiaokang},
	school = {The Pennsylvania State University},
	title = {Measure theory in weak subsystems of second-order arithmetic},
	year = {1987}}

@article{yu1990measure,
	author = {Yu, Xiaokang and Simpson, Stephen G},
	journal = {Archive for Mathematical Logic},
	number = {3},
	pages = {171--180},
	publisher = {Springer},
	title = {Measure theory and weak K{\"o}nig's lemma},
	volume = {30},
	year = {1990}}

@article{brown1993baire,
	author = {Brown, Douglas K and Simpson, Stephen G},
	journal = {The Journal of Symbolic Logic},
	number = {2},
	pages = {557--578},
	publisher = {Cambridge University Press},
	title = {The Baire category theorem in weak subsystems of second-order arithmetic},
	volume = {58},
	year = {1993}}

@inproceedings{davies1952accessibility,
	author = {Davies, Roy O},
	booktitle = {Mathematical Proceedings of the Cambridge Philosophical Society},
	number = {2},
	organization = {Cambridge University Press},
	pages = {215--232},
	title = {On accessibility of plane sets and differentiation of functions of two real variables},
	volume = {48},
	year = {1952}}

@inproceedings{besicovitch1952existence,
	author = {Besicovitch, Abram S},
	booktitle = {Indagationes Mathematicae (Proceedings)},
	organization = {Elsevier},
	pages = {339--344},
	title = {On existence of subsets of finite measure of sets of infinite measure},
	volume = {55},
	year = {1952}}

@book{hirschfeldt2015slicing,
	author = {Hirschfeldt, Denis R},
	publisher = {World Scientific},
	title = {Slicing the truth: On the computable and reverse mathematics of combinatorial principles},
	year = {2015}}

@book{falconer2013fractal,
	author = {Falconer, Kenneth},
	publisher = {John Wiley \& Sons},
	title = {Fractal geometry: mathematical foundations and applications},
	year = {2013}}

@article{pauly2017constructive,
	author = {Pauly, Arno and Fouch{\'e}, Willem},
	journal = {Journal of Logic and Analysis},
	title = {How constructive is constructing measures?},
	volume = {9},
	year = {2017}}

@article{reimann2008effectively,
	author = {Reimann, Jan},
	journal = {Annals of {P}ure and {A}pplied {L}ogic},
	pages = {170-182},
	title = {Effectively closed sets of measures and randomness},
	volume = {156},
	year = {2008}}

@article{Dzhafarov:2022a,
	author = {Dzhafarov, Damir D and Mummert, Carl},
	journal = {Problems, reductions, and proofs. IN: Theory Appl. Comput., Cham},
	title = {Reverse mathematics},
	year = {2022}}

@incollection{Mytilinaios:1996a,
	author = {Mytilinaios, Michael E and Slaman, Theodore A},
	booktitle = {Computability, enumerability, unsolvability},
	pages = {205--218},
	publisher = {Cambridge Univ. Press, Cambridge},
	series = {London Math. Soc. Lecture Note Ser.},
	title = {On a question of {B}rown and {S}impson},
	volume = {224},
	year = {1996}}

@phdthesis{Gruner:2026a,
    author = {Gruner, Emma E},
    title = {Studies in the {R}everse {M}athematics of {B}orel sets and measure regularity},
    school = {The Pennsylvania State University},
    year = {2026} 
}

@article{Brattka:2018a,
	author = {Brattka, Vasco and Hendtlass, Matthew and Kreuzer, Alexander P},
	journal = {Notre Dame J. Form. Log.},
	number = {4},
	pages = {605--636},
	title = {On the uniform computational content of the {B}aire category theorem},
	volume = {59},
	year = {2018}}

\end{document}